\DeclareSymbolFont{cyrletters}{OT2}{wncyr}{m}{n}
\DeclareMathSymbol{\Sha}{\mathalpha}{cyrletters}{"58}
\newtheorem{theorem}{Theorem}
\newtheorem{lemma}{Lemma}
\newtheorem{Conjecture}{Conjecture}
\newtheorem{definition}{Definition}
\newtheorem{remark}{Remark}
\newtheorem*{theorem*}{Theorem}
\newtheorem*{question*}{Question}
\newcommand{\Z}{{\mathbb{Z}}}
\newcommand{\E}{{\mathcal{E}}}
\newcommand{\F}{{\mathbb F}}
\DeclareMathOperator{\rank}{rank}
\newcommand\remove[1]{}
\DeclareMathOperator{\Sel}{Sel}
\begin{document}
\date{\today}
\author{Subham Bhakta} 
\address{Indian Institute of Science Education and Research, Thiruvananthapuram, India} \email{subham1729@iisertvm.ac.in}
\author{Srilakshmi Krishnamoorthy}
\address{Indian Institute of Science Education and Research, Thiruvananthapuram, India}
\email{srilakshmi@iisertvm.ac.in}

\subjclass[2020]{Primary 11F30, 11L07; Secondary 11F52, 11F80}
\keywords{Elliptic curves, Modular curves, Watkins's conjecture}

\newcommand\G{\mathbb{G}}
\newcommand\sO{\mathcal{O}}
\newcommand\sE{{\mathcal{E}}}
\newcommand\tE{{\mathcal{E}}}
\newcommand\sF{{\mathcal{F}}}
\newcommand\sG{{\mathcal{G}}}
\newcommand\sH{{\mathcal{H}}}
\newcommand\sN{{\mathcal{N}}}
\newcommand\GL{{\mathrm{GL}}}
\newcommand\HH{{\mathrm H}}
\newcommand\mM{{\mathrm M}}
\newcommand\fS{\mathfrak{S}}
\newcommand\fP{\mathfrak{P}}
\newcommand\fp{\mathfrak{p}}
\newcommand\fQ{\mathfrak{Q}}
\newcommand\Qbar{{\bar{\Q}}}
\newcommand\sQ{{\mathcal{Q}}}
\newcommand\sP{{\mathbb{P}}}
\newcommand{\Q}{\mathbb{Q}}
\newcommand{\tH}{\mathbb{H}}
\newcommand{\R}{\mathbb{R}}
\newcommand\Gal{{\mathrm {Gal}}}
\newcommand\SL{{\mathrm {SL}}}
\newcommand\Hom{{\mathrm {Hom}}}
\newtheorem{thm}{Theorem}[section]
\newtheorem{ack}[thm]{Acknowledgement}
\newtheorem{cor}[thm]{Corollary}
\newtheorem{conj}[thm]{Conjecture}
\newtheorem{prop}[thm]{Proposition}
\theoremstyle{definition}
\newtheorem{claim}[thm]{Claim}
\theoremstyle{remark}
\newtheorem*{fact}{Fact} 
\title[Watkins's conjecture for elliptic curves with a rational torsion]{Watkins's conjecture for elliptic curves with a rational torsion}
\begin{abstract}
Watkins's conjecture suggests that for an elliptic curve $E/\Q$, the rank of the group $E(\Q)$ of rational points is bounded above by $\nu_2 (m_E)$, where $m_E$ is the modular degree associated with $E$. In \cite[Thm.~2]{BKP23}, it has been established that Watkins's conjecture holds on average. This article investigates the conjecture over certain thin families of elliptic curves. For example, for prime $\ell$, we quantify the elliptic curves featuring a rational $\ell$-torsion that satisfies Watkins's conjecture. Additionally, the study extends to a broader context, investigating the inequality $\mathrm{rank}(E(\Q))+M\leq \nu_2(m_E)$ for any positive integer $M$.
\end{abstract}
\maketitle 
\tableofcontents
\section{Introduction}
Let $X_0(N)$ be the modular curve that can be realized as the Riemann surface obtained by the standard action of the congruence subgroup $\Gamma_0(N)$ on the upper half place $\mathbb{H}$. Consider $E/\Q$ to be any arbitrary elliptic curve. We know that $E$ is modular by the work of Wiles, Taylor-Wiles, et al. In other words, there exists a surjective morphism (defined over $\Q$)
$\phi_{E}:X_0(N) \to E$, which is called the \textit{modular parametrization} of $E$. Throughout the article, we shall assume that $\phi_E$ has a minimal degree, sending the cusp $\infty$ of $X_0(N)$ to the identity of $E$. We denote the minimal degree as $m_E$, the modular degree of $E$. In this article, we mainly aim to study Watkins's conjecture.
\begin{Conjecture}[Watkins]
    For any elliptic curve $E/\Q$, we have $\mathrm{rank}_{\Z}(E(\Q))\leq \nu_2(m_E)$.
\end{Conjecture}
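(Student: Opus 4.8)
The statement is Watkins's conjecture in full generality, which remains open; accordingly I can only outline a line of attack and indicate precisely where it stalls, rather than present a complete argument. The plan is to factor the desired inequality through two bridges: an \emph{arithmetic} bridge that controls $\mathrm{rank}_\Z(E(\Q))$ by the analytic invariants of $E$, and a \emph{congruence} bridge that extracts $\nu_2(m_E)$ from the newform $f_E$ attached to $E$ via the parametrization $\phi_E\colon X_0(N)\to E$.

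First I would replace the rank by the order of vanishing of the $L$-function: assuming the rank part of the Birch--Swinnerton-Dyer conjecture (or restricting to the cases $\mathrm{ord}_{s=1}L(E,s)\le 1$ that are handled unconditionally by Gross--Zagier and Kolyvagin), it suffices to bound $r:=\mathrm{ord}_{s=1}L(E,s)$ by $\nu_2(m_E)$. The functional equation then fixes the parity $w_E=(-1)^r$, so the root number already decides whether $r$ is even or odd; the genuine content is thus the \emph{size} of $r$, not its parity.

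Second I would attack $\nu_2(m_E)$ through the congruence number $n_E$. By the theory of congruence modules one has $m_E \mid n_E$, with the quotient supported only at primes of bad reduction, so $\nu_2(m_E)$ can be studied via congruences between $f_E$ and other newforms of the same level, i.e.\ via the mod-$2$ Galois representation $\bar\rho_{E,2}$ and its deformations. The natural toehold is the torsion/isogeny case already exploited in this article: a rational $2$-isogeny or a rational $\ell$-torsion point forces reducibility of $\bar\rho_{E,2}$ (or a suitable congruence), which in turn forces divisibility of $m_E$ by a controlled power of $2$; this is precisely the mechanism behind the thin-family results announced in the abstract.

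The hard part --- and the reason the conjecture is open --- is the missing third bridge: one must show that each independent element of the \emph{free} part of $E(\Q)$, equivalently each unit increase in $r$, forces an additional factor of $2$ in $m_E$. A rational point of infinite order produces neither reducibility of $\bar\rho_{E,2}$ nor a level-lowering congruence, so the torsion mechanism does not transfer. One would hope that Heegner-point and Gross--Zagier data on $X_0(N)\to E$ convert a high order of vanishing into $2$-adic congruences of $f_E$ through the explicit relation between $m_E$ and the Petersson norm $\langle f_E,f_E\rangle$, but no such quantitative arithmetic-to-modular dictionary is known in general. Absent it, this strategy can only recover Watkins's inequality in the regimes the present paper addresses --- on average, and over thin families with prescribed torsion --- which is exactly why the statement as worded must, for now, remain a conjecture rather than a theorem.
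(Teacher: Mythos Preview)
Your assessment is correct: the statement is labeled \emph{Conjecture} in the paper and is not proved there; it is the motivating open problem, and the paper only establishes it (or its $M$-Watkins strengthening) for various thin families and on average. So there is no ``paper's own proof'' to compare against, and your decision to sketch a strategy while flagging the gap is the honest response.

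That said, your outline diverges from the mechanisms the paper actually exploits. The paper does not pass through BSD, Heegner points, or the congruence number $n_E$; its engine is the Dummigan--Krishnamoorthy lower bound $\nu_2(m_E)\ge \omega(N(E))-\dim_{\F_2}(W/W')-\dim_{\F_2}E(\Q)[2]$, coming from the Atkin--Lehner group acting on the modular parametrization, combined with descent-type upper bounds $\mathrm{rank}(E(\Q))\le \omega(N(E))-c$ obtained via $2$- or $3$-isogeny Selmer groups. The ``extra factor of $2$ per generator'' that you identify as missing is exactly what the paper cannot supply either; it sidesteps the issue by arranging, over the families in question, that both rank and $\nu_2(m_E)$ are controlled by $\omega(N(E))$. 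If you want your sketch to align with the paper's actual toolkit, the relevant bridge is Atkin--Lehner involutions rather than congruence modules or deformation theory.
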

The conjecture is known to be true when the modular degree is odd. This follows from the work of Calegari, Emerton ~\cite{CE09}, Yazdani~\cite{yazdani11} and Kazalicki, Kohem~\cite{KK18watkins}. When $E(\Q)[2]$ is trivial, Dummigan~\cite{neil06} heuristically proved that certain classes of elliptic curves satisfy Watkins's conjecture, under the assumption that $R\cong T$. This assumption is about a relation between the tangent space of a suitable deformation ring that captures relevant properties of the residual representation $\overline{\rho}:\mathrm{Gal}(\overline{\Q}/\Q)\to \mathrm{GL}_2(E[2])$, and a certain Selmer group which is large enough to capture the weak Mordell-Weil group.

An alternative heuristic proof of Watkins's conjecture is presented in \cite[Thm.~2]{BKP23}, demonstrating its average truth. Specifically, the conjecture holds true for nearly all elliptic curves under the Minimalist conjecture. This follows as a consequence of the primary outcome in Dummigan and Krishnamoorthy~\cite{DK13}, which essentially establishes a lower bound on $\nu_2(m_E)$ based on the number of prime factors in the conductor of the elliptic curve $E$. Recently Park, Poonen, Voight, and Wood \cite{rank21} conjectured that, there are only finitely many elliptic curves of rank $21$. With these heuristics, Watkins's conjecture is true for all but finitely many elliptic curves with a large number of bad reductions.

It is noteworthy that any elliptic curve over $\Q$ can be expressed in the minimal Weierstrass form 
\begin{equation}\label{eqn:Emin}
E_{A, B}: y^2 = x^3 + Ax + B,~A, B \in \Z,~p^4|A \implies p^6\nmid B.
\end{equation}
In the context of \cite[Thm.~2]{BKP23}, elliptic curves are arranged based on their height, defined as 
\begin{equation}\label{eqn:height}
H(E_{A,B}) = \max\Big\{|A|^3,|B|^2\Big\}^{1/6}.
\end{equation}
Throughout the article, we shall arrange the elliptic curves with respect to the height $H$, as defined above. Note that there are approximately $\asymp X^{5}$ elliptic curves with heights not exceeding $X$. The arithmetic statistics presented in \cite[Thm.~2]{BKP23} do not address thin families, i.e., families that exclusively consist of $0\%$ of all elliptic curves. In this article, our main focus revolves around the thin family given by the elliptic curves equipped with at least one rational torsion point. In other words, we work over the families of elliptic curves for which $G=E(\Q)_{\mathrm{tor}}$ is non-trivial. Due to the celebrated theorem of Mazur~\cite{Mazur1977}, it is enough to study the following $14$ possibilities of $G$: 
\begin{equation}
\label{mazur}
\begin{aligned}
\Z/n\Z & \quad \textrm{with $2 \le n \le 10$ or $n=12$} \\
\Z/2\Z \times \Z/n\Z & \quad \textrm{with $n=2,4,6,8$.}
\end{aligned}
\end{equation}
Note that Caro and Pasten \cite{CP22}, later Pasupulati and both of the authors of this article in \cite[Thm.~6]{BKP23}, exhibited a family of elliptic curves satisfying Watkins's conjecture when $G$ is one of the $10$ groups that has an even order. Following this, we set the following notations, that will be followed throughout the whole article 
\begin{definition}\label{def:main}
For any prime $\ell \in \{2,3,5,7\}$, we set
    $$\mathcal{E}=\{E_{A,B}~\mathrm{minimal}: A,B\in \Z^2\},~\mathcal{E}_{\ell}=\{E\in \mathcal{E}:E(\Q)[\ell]\neq~\mathrm{trivial}\},$$
and for the counting purpose, we set
    $$\mathcal{E}(X)=\{E\in \mathcal{E}: H(E)\leq X\},~\mathcal{E}_{\ell}(X)=\{E\in \mathcal{E}: H(E)\leq X,~E(\Q)[\ell]\neq~\mathrm{trivial}\},$$
i.e., $\mathcal{E}_{\ell}(X)=\mathcal{E}(X)\cap \mathcal{E}_{\ell}$. 
\end{definition} 
In this article, we aim to quantify the proportion of elliptic curves over each of the families in Definition~\ref{def:main}, that satisfy Watkins's conjecture. In this regard, let us first denote $\mathcal{E}'_2$ be the set of all elliptic curves $E_{a,b}:=y^2=x^3+ax^2+bx,~a,b\in \Z$. Note that any $E_{A,B}\in \mathcal{E}_2$ can be written in the Weierstrass form $E_{-3b,~3b^2+a} := y^2 = x^3 - 3bx^2 + (3b^2+a)x,~a, b \in \Z$. This shows that $\mathcal{E}_2 \subset \mathcal{E}'_2$. We shall arrange the elliptic curves in $\mathcal{E}'_2$ with respect to the function $H_2(E_{a,b})=\max\{|a|^2,|b|\}^{1/2}$. We shall demonstrate in Section~\ref{sec:E2setup} that the number of elliptic curves in $\mathcal{E}_2$ of height (with respect to $H$) at most $X$, is asymptotically the number of elliptic curves in $\mathcal{E}'_2$ of height (with respect to $H_2$) at most $X$. In this regard, we first prove the following.
\begin{theorem}\label{thm:uncond}
When arranging the elliptic curves in $\mathcal{E}'_2$ according to the height $H_2$, then Watkins's conjecture is true for a positive proportion of elliptic curves in $\mathcal{E}'_{2}$. Consequently, Watkins's conjecture is true for a positive proportion of elliptic curves in $\mathcal{E}_2$, arranging with respect to height $H$.
\end{theorem}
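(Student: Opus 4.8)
\emph{Proof strategy.} The plan is to reduce the theorem to a rank bound for the family, and then to obtain that bound by $2$-isogeny descent over the box cut out by $H_2$. The starting point is that Watkins's conjecture holds automatically for every elliptic curve $E/\Q$ with $\mathrm{rank}_{\Z}(E(\Q))\le1$: if the rank is $0$ then $0\le\nu_2(m_E)$ is trivial, and if the rank is $1$ then $m_E$ cannot be odd, since an odd modular degree forces rank $0$ by \cite{CE09,yazdani11,KK18watkins}, so $\nu_2(m_E)\ge1$. (A fortiori, a positive proportion of rank-$0$ curves would suffice, the conjecture then being vacuous.) Hence Theorem~\ref{thm:uncond} reduces to showing that, with the curves $E_{a,b}\colon y^2=x^3+ax^2+bx$ of $\mathcal{E}'_2$ ordered by $H_2$, a positive proportion satisfy $\mathrm{rank}_{\Z}(E_{a,b}(\Q))\le1$; the statement for $\mathcal{E}_2$ ordered by $H$ is then immediate from the counting comparison of Section~\ref{sec:E2setup}, since passing to a minimal model changes the height by at most a bounded factor and is generically an isomorphism, while $\mathcal{E}'_2\setminus\mathcal{E}_2$ is negligible for $H_2$.

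To bound the rank I would run the classical descent by the rational $2$-isogeny $\phi\colon E_{a,b}\to E'_{a,b}$ with kernel $\langle(0,0)\rangle$, where $E'_{a,b}\colon y^2=x^3-2ax^2+(a^2-4b)x$. This gives the standard inequality
\[
\mathrm{rank}_{\Z}(E_{a,b}(\Q))\ \le\ \dim_{\F_2}\mathrm{Sel}^{(\phi)}(E_{a,b}/\Q)+\dim_{\F_2}\mathrm{Sel}^{(\widehat{\phi})}(E'_{a,b}/\Q)-2,
\]
with the two descent maps taking values in the subgroups of $\Q^{\times}/(\Q^{\times})^{2}$ generated by $-1$ and by the primes dividing $b$, respectively $a^2-4b$, and the Selmer groups cut out inside these by local solubility conditions at the bad primes --- equivalently, by congruence and sign conditions on $(a,b)$. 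So it suffices to show that for a positive proportion of lattice points $(a,b)$ in the region $|a|\le X$, $|b|\le X^2$ (the region defining $H_2\le X$) one has $\dim_{\F_2}\mathrm{Sel}^{(\phi)}(E_{a,b})+\dim_{\F_2}\mathrm{Sel}^{(\widehat{\phi})}(E'_{a,b})\le3$; in fact one expects a positive proportion with both terms at the forced minimum $1$, i.e.\ with $\mathrm{rank}=0$.

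The substance of the argument is thus the distribution of these $2$-isogeny Selmer groups as $(a,b)$ runs over the box $\{|a|\le X,\ |b|\le X^2\}$. I would obtain it by a sieve: for each relevant square class, being everywhere locally soluble becomes a union of congruence and archimedean conditions on $(a,b)$, whose densities one adds up by lattice-point counting in the box together with the established techniques for averaging Selmer groups of elliptic curves with a marked rational $2$-torsion point; even if the full average of the Selmer ranks is delicate for this ordering, a positive proportion of $(a,b)$ should still give $\mathrm{Sel}^{(\phi)}(E_{a,b})$ and $\mathrm{Sel}^{(\widehat{\phi})}(E'_{a,b})$ trivial beyond their forced part, hence $\mathrm{rank}=0$. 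The main obstacle --- the step demanding genuine care --- is the compatibility of the asymmetric, non-homogeneous ordering by $H_2$ (the box $|a|\le X$, $|b|\le X^2$) with these equidistribution statements: one must verify that the relevant infinite products of local densities converge, that the $(a,b)$ lying on degenerate loci (non-minimal members, small discriminant, or $b$ or $a^2-4b$ far from squarefree) occupy only $o(X^3)$ of the box, and that the tail estimates for the large-Selmer contribution hold uniformly over this region. Granting this, the positive-proportion rank bound combines with the reduction above to yield Theorem~\ref{thm:uncond}.
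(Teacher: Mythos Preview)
Your approach differs substantially from the paper's. You aim to show that a positive proportion of curves in $\mathcal{E}'_2$ have rank $\le 1$ and then invoke the known fact that Watkins holds for rank $\le 1$. The paper instead never bounds the rank absolutely: it uses the Dummigan--Krishnamoorthy inequality (Corollary~\ref{cor:E2}) to get $\nu_2(m_E)\ge \omega(N(E))-2$ for curves with $E(\Q)[2]=\Z/2\Z$ (which is almost all of $\mathcal{E}_2$, Lemma~\ref{lem:z2}), and then shows via $2$-isogeny descent that $\mathrm{rank}(E_{a,b}(\Q))\le \omega(N(E_{a,b}))-2$ for a positive proportion of $(a,b)$ (Corollary~\ref{cor:main}). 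Chaining these two inequalities gives Watkins directly, with the rank allowed to be arbitrarily large.

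The practical payoff of the paper's route is that the descent input required is far milder than yours. The paper only needs to knock \emph{one} square class out of $\mathrm{Sel}^{\phi}$: Lemmas~\ref{lem:phisriteria} and~\ref{lem:-mbound} show that for almost all $(a,b)$ there exist primes $q\mid s(a^2-4b)$ and $p\mid b$ with $\bigl(\tfrac{q}{p}\bigr)=-1$ and $\bigl(\tfrac{a}{p}\bigr)=1$, which kills one local solubility condition and already forces $\mathrm{rank}\le \omega(N)-2$ via Corollary~\ref{cor:rankmain}. By contrast, you need to drive \emph{both} $\mathrm{Sel}^{\phi}$ and $\mathrm{Sel}^{\widehat{\phi}}$ down to their forced parts simultaneously. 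That is a genuinely harder statement; positive-proportion rank-$0$ results in $2$-torsion families do exist in the literature, so your strategy is not wrong in principle, but the step you yourself flag as the ``main obstacle'' (compatibility of the Selmer-distribution machinery with the asymmetric $H_2$-box) is precisely the substance of the argument, and you end at ``Granting this'' without carrying it out. As written, the proposal is a reasonable outline whose hardest step is assumed; the paper's use of the Dummigan--Krishnamoorthy bound sidesteps that difficulty entirely.
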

Moreover, we leverage the reasoning outlined in \cite{JMR} to construct a specific sub-family within $\mathcal{E}_2$ by considering quadratic twists of an elliptic curve, all of which confirms the Watkins's conjecture. This finding is documented in Corollary~\ref{cor:another}, and can be viewed as extensions of \cite[Thm.~1.2]{EP21} and \cite[Thm.~1.2]{caro2022watkins} for a specific family of elliptic curves.

Additionally, we will delve into an extended version of Watkins's conjecture for further investigation.
\begin{definition}
    For any integer $M\geq 0$, we say that an elliptic curve $E$ is $M$-Watkins, if $r+M\leq \nu_2(m_E).$
\end{definition}
Certainly, Watkins's conjecture suggests that any elliptic curve $E/\mathbb{Q}$ is $0$-Watkins. In \cite{BKP23}, several results regarding the counting of $0$-Watkins elliptic curves were discussed. The main objective of the article is to enumerate the generalized $M$-Watkins elliptic curves for any integer $M$. In this setting, we have the following.
\begin{theorem}\label{thm:e2mwatkins}
    For any integer $M$, a positive proportion of elliptic curves in $\mathcal{E}_2$ is $M$-Watkins when arranged with respect to height $H$.
\end{theorem}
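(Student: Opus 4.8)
The plan is to reuse the argument behind Theorem~\ref{thm:uncond} with a single strengthened input: along $\mathcal{E}_2$ the quantity $\nu_2(m_E)$ is not merely $\ge\mathrm{rank}_{\Z}(E(\Q))$ for a positive proportion, it is \emph{unboundedly large} along a density-one subfamily, so replacing the target inequality $\mathrm{rank}_{\Z}(E(\Q))\le\nu_2(m_E)$ by $\mathrm{rank}_{\Z}(E(\Q))+M\le\nu_2(m_E)$ costs nothing in density. As in the proof of Theorem~\ref{thm:uncond} I would work inside $\mathcal{E}'_2$ ordered by $H_2$ and transfer the conclusion to $\mathcal{E}_2$ ordered by $H$ via the comparison of Section~\ref{sec:E2setup}; recall $\#\mathcal{E}'_2(X)\asymp X^3$ and that every $E_{a,b}\in\mathcal{E}'_2$ carries the rational $2$-torsion point $(0,0)$.

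The first ingredient is the rank bound already present in the proof of Theorem~\ref{thm:uncond}: there is an absolute constant $R_0\ge 0$ and a subset $\mathcal{S}_0\subseteq\mathcal{E}'_2$ of positive density $\delta>0$ (with respect to $H_2$) on which $\mathrm{rank}_{\Z}(E(\Q))\le R_0$ (one may in fact take $R_0=0$ and $\mathcal{S}_0$ the rank-zero curves, a positive proportion of $\mathcal{E}'_2$ by $2$-isogeny descent, but any fixed $R_0$ is enough). The second ingredient is the lower bound for $\nu_2(m_E)$ of Dummigan--Krishnamoorthy~\cite{DK13}, which applies to every curve with a rational $2$-torsion point and yields an absolute constant $c_0$ with
\[
\nu_2(m_E)\ \ge\ \omega^{\dagger}(E)-c_0 ,
\]
where $\omega^{\dagger}(E)$ counts the primes of the conductor of $E$ at which the reduction is multiplicative (at worst). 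For $E=E_{a,b}$ one has $\omega^{\dagger}(E_{a,b})\ge\#\{p:\,p\mid b,\ p\nmid a\}$, since modulo such a $p$ the cubic $x^3+ax^2+bx$ acquires a node at the origin, so $E_{a,b}$ has multiplicative reduction there.

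Next I would fix $M$, put $K:=R_0+c_0+\max\{M,0\}$, and show that $\mathcal{T}_K:=\{E_{a,b}\in\mathcal{E}'_2:\ \#\{p\mid b:\,p\nmid a\}\ge K\}$ has density $1$ with respect to $H_2$. This is a routine Hardy--Ramanujan/Tur\'an estimate: for a ``typical'' $a$ (those with $\omega(a)\le 2\log\log X$, excluding only $o(X)$ values of $|a|\le X$) the number of $|b|\le X^2$ violating the condition is $o(X^2)$, uniformly, because such $b$ have fewer than $K$ prime factors outside a set of $X^{o(1)}$ forbidden divisors; the atypical $a$ contribute at most $o(X)\cdot X^2=o(X^3)$. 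By the displayed inequality, every $E\in\mathcal{T}_K$ satisfies $\nu_2(m_E)\ge K-c_0\ge R_0+M$. Since $\mathcal{T}_K$ has density $1$, the intersection $\mathcal{S}_0\cap\mathcal{T}_K$ still has density $\ge\delta>0$, and every curve in it satisfies $\mathrm{rank}_{\Z}(E(\Q))+M\le R_0+M\le\nu_2(m_E)$, i.e.\ is $M$-Watkins; transferring back to $\mathcal{E}_2$ ordered by $H$ via Section~\ref{sec:E2setup} finishes the proof.

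I expect the only delicate point to be matching the precise hypotheses of~\cite{DK13}: one must verify that the primes it counts really include all $p\mid b$ with $p\nmid a$ (no hidden split/non-split or parity restriction throws away too many of them), and that the input on the $2$-part of the Manin constant for a curve with rational $2$-torsion is available in exactly the shape the $2$-valuation estimate requires. If~\cite{DK13} only controls a proper subclass of multiplicative primes (say the non-split ones, or those in a prescribed Chebotarev class determined by a Legendre symbol in $a$), one simply restricts the sieve defining $\mathcal{T}_K$ to $b$ having $\ge K$ prime divisors $p\nmid a$ in that class; this is still a density-one condition (Chebotarev plus the same Tur\'an-type estimate), and the remainder of the argument is unchanged. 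The rank input is not an obstacle, being exactly what already drives Theorem~\ref{thm:uncond}.
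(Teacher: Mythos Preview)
Your strategy differs from the paper's, and it rests on an input that is \emph{not} what the proof of Theorem~\ref{thm:uncond} supplies. You assert that Theorem~\ref{thm:uncond} already produces an absolute constant $R_0$ and a positive-density subset $\mathcal{S}_0\subseteq\mathcal{E}'_2$ with $\rank(E(\Q))\le R_0$ on $\mathcal{S}_0$. It does not: the output of Corollary~\ref{cor:main} (which drives Theorem~\ref{thm:uncond}) is only $\rank(E(\Q))\le\omega(N(E))-2$, and $\omega(N(E))$ is typically of order $\log\log X$ on $\mathcal{E}'_2(X)$, hence unbounded. Your parenthetical fallback, that ``$R_0=0$ works by $2$-isogeny descent'', is not a triviality either: forcing both the $\phi$- and $\hat\phi$-Selmer ranks to be $O(1)$ on a genuinely positive-density subset of the two-parameter family $\mathcal{E}'_2$ requires control well beyond the single local obstruction exploited in Corollary~\ref{cor:rankmain}, and nothing of that strength is proved or invoked in the paper. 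Your density-one claim for $\mathcal{T}_K$ and the Dummigan--Krishnamoorthy input are fine, but without the bounded-rank set $\mathcal{S}_0$ the decoupling collapses.

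The paper's argument instead keeps rank and modular degree linked through $\omega(N(E))$ and tightens the \emph{rank} side by $M$. For each non-square $a$ it fixes an auxiliary prime $p=P(a)$ and primes $q_1,\dots,q_M$ determined by Legendre conditions modulo $p$, then restricts to coprime $b$ with $\nu_{q_i}(a^2-4b)=1$ for all $i$; Lemma~\ref{lem:phisriteria} forces the homogeneous space attached to each $q_i$ to fail locally at $p$, so the $\phi$-Selmer rank drops by $M$ and one obtains $\rank(E_{a,b}(\Q))\le\omega(N(E_{a,b}))-M$ (Lemma~\ref{lem:mbound}). A direct sieve (Lemma~\ref{cor:Sboundprop}) shows this constraint still cuts out density $\gg(2M)^{-M}$, and combining with $\nu_2(m_E)\ge\omega(N(E))-2$ from Corollary~\ref{cor:E2} (valid once $E(\Q)[2]=\Z/2\Z$, i.e.\ on density one by Lemma~\ref{lem:z2}) gives the $M$-Watkins inequality after a harmless shift $M\mapsto M+2$. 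Note the trade-off: the paper's density shrinks like $(2M)^{-M}$, whereas your scheme---had the bounded-rank input been legitimately available---would have delivered an $M$-independent density.
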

Certainly, one can derive Theorem~\ref{thm:uncond} from Theorem~\ref{thm:e2mwatkins}. However, the density we achieve in this case is reduced by approximately a factor of $\frac{1}{(2M)^M}$, as demonstrated in Lemma~\ref{cor:Sboundprop}.

Subsequently, we investigate the $M$-Watkins property over $\mathcal{E}_3$ in both unconditional and conditional ways. For the unconditional case, the key ingredient lies in studying the elliptic curves of \textit{Type-I}. Specifically, this means examining the elliptic curves of the form $E_a:y^2=x^3+a,~a\in \Z$. In this context, we prove the following result.
\begin{theorem}\label{thm:x3+k}
For the \textit{Type-I} elliptic curves, we have the following estimates for any integer $M$:
\begin{enumerate}
\item [(i)] almost all the elliptic curves over Type-I are $M$ Watkins. More precisely, 
$$\#\Big\{a\in \Z: |a|\leq X,~E_a~\mathrm{is~not}~M-\mathrm{Watkins}\Big\}=O_M\left( \frac{X}{\log \log X}\right).$$
   \item[(ii)] For any integer $k$, consider the set of integers $N_{k,m}=\{kn^2:n\in \Z\}$. Then, for any integer $M$, we have $\#\Big\{a\in N_{k,m}:|a|\leq X,~E_a~\mathrm{is}~M-\mathrm{Watkins}\Big\}\gg_{k,M}\left(\frac{X}{\log X}\right)^{1/2}$.\\
    \item[(iii)] The size of $\mathcal{E}_2(X)$ is approximately $\asymp X^3$, and the number of $M$-Watkins \text{Type-I} elliptic curves in $\mathcal{E}_2(X)$ has a lower bound of order $\gg \frac{X}{(\log X)^{1/4}}$.
\end{enumerate}
\end{theorem}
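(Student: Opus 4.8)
All three parts reduce to two estimates for a Type-I curve $E_a:y^2=x^3+a$: an \emph{unconditional} upper bound $\mathrm{rank}_{\Z}(E_a(\Q))\le r(a)$ obtained by descent, and a lower bound $\nu_2(m_{E_a})\ge d(a)$ obtained from \cite{DK13}, where $r(a)$ and $d(a)$ are counts of prime factors of $a$ lying in prescribed sets of primes; then $E_a$ is $M$-Watkins as soon as $d(a)-r(a)\ge M$. The descent is driven by the rational $3$-isogeny that every Type-I curve carries: the order-$3$ subgroup $C=\{O,(0,\sqrt a),(0,-\sqrt a)\}$ of $E_a$ is $\Gal(\overline{\Q}/\Q)$-stable, so there is a $\Q$-rational isogeny $\phi\colon E_a\to E'_a$ with $E'_a:y^2=x^3-27a$ and dual $\widehat\phi$, and the exact sequences of the $\phi$- and $\widehat\phi$-descents give $\mathrm{rank}_{\Z}(E_a(\Q))\le\dim_{\mathbb{F}_3}\Sel^{\phi}(E_a)+\dim_{\mathbb{F}_3}\Sel^{\widehat\phi}(E'_a)+O(1)$. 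A local analysis at the bad primes — all additive, with Kodaira type determined by $v_p(a)\bmod 6$ — shows that a prime $p\mid a$ enlarges these Selmer groups only when $p$ lies in an explicit set $S$ of primes of Dirichlet density $\tfrac{1}{2}$ (roughly, primes split in $\Q(\zeta_3)$ subject to a cubic-residue condition on $a/p^{v_p(a)}$), so that $r(a)=\omega_S(a)+O(1)$ with $\omega_S(a):=\#\{p\mid a:p\in S\}$; on the other hand, specializing \cite[Thm.~2]{BKP23} and the underlying estimate of \cite{DK13} to Type-I curves yields a bound of the form $d(a)=\omega(a)-O(1)$. Hence $E_a$ is $M$-Watkins whenever $\omega_{S^c}(a)\ge M+O(1)$, where $S^c$ is the complementary set of primes, again of density $\tfrac{1}{2}$.

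For (i), the $a$ with $|a|\le X$ that fail this are exactly those with $\omega_{S^c}(a)\le M+O(1)$. Since $\omega_{S^c}$ is strongly additive with $\sum_{p\le X,\ p\in S^c}p^{-1}=\tfrac{1}{2}\log\log X+O(1)$, the Tur\'an--Kubilius inequality gives $\sum_{|a|\le X}\bigl(\omega_{S^c}(a)-\tfrac{1}{2}\log\log X\bigr)^2\ll X\log\log X$, whence the number of exceptional $a$ is $\ll_M X\log\log X\cdot(\log\log X)^{-2}=X/\log\log X$, which is the claimed bound.

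For (ii) and (iii) one instead \emph{constructs} the $M$-Watkins curves. In (ii), $E_{kn^2}$ is precisely the cubic twist of $E_k:y^2=x^3+k$ by $n$, and if $p\mid n$ with $p\nmid 6k$ then $v_p(kn^2)=2$, so $p$ is an additive (type $\mathrm{IV}$) prime of $E_{kn^2}$, and if moreover $p\equiv 2\pmod 3$ then $p\in S^c$. Taking $n$ to be a product of distinct primes all $\equiv 2\pmod 3$ and coprime to $6k$, the estimates above force $\mathrm{rank}_{\Z}(E_{kn^2}(\Q))=O_k(1)$ and $\nu_2(m_{E_{kn^2}})\ge\omega(n)-O_k(1)$, so $E_{kn^2}$ is $M$-Watkins once $\omega(n)\ge M+O_k(1)$. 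As the primes $\equiv 2\pmod 3$ have density $\tfrac{1}{2}$, the Landau--Selberg--Delange (or Wirsing) method counts $\gg_{k,M}(X/k)^{1/2}(\log X)^{-1/2}\gg_{k,M}(X/\log X)^{1/2}$ admissible $n$ with $|n|\le (X/k)^{1/2}$, giving (ii). In (iii), a Type-I curve $E_{0,a}$ has a rational $2$-torsion point iff $x^3+a$ has an integral root, i.e.\ iff $a=-t^3$ for some $t\in\Z$; thus the Type-I members of $\mathcal{E}_2$ are precisely the curves $E_{0,-t^3}:y^2=x^3-t^3$, the quadratic twists of $E_{-1}:y^2=x^3-1$ by $t$. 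Since $H(E_{0,-t^3})=|t|$ there are $\asymp X$ of them of height $\le X$, while $\#\mathcal{E}_2(X)\asymp X^3$ follows from the reduction to $\mathcal{E}'_2$ of Section~\ref{sec:E2setup} (counting $E_{a,b}$ with $|a|\le X$, $|b|\le X^2$). These curves carry a rational $2$-torsion point, so we run a $2$-descent and apply the lower bound of \cite{DK13} for curves with rational $2$-torsion (each odd $p\mid t$ giving type $\mathrm{I}_0^*$ reduction with $2\mid c_p$, so $\nu_2(m_{E_{0,-t^3}})\ge\omega(t)-O(1)$); an analysis of the $2$-descent local conditions at $E_{-1}$ isolates an explicit set of primes of density $\tfrac{3}{4}$ such that, for $t$ built from this set, $\mathrm{rank}_{\Z}(E_{0,-t^3}(\Q))$ stays bounded. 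Taking such $t$ with $\omega(t)\ge M+O(1)$ makes $E_{0,-t^3}$ $M$-Watkins, and the Selberg--Delange count of such $t\le X$ is $\gg_M X(\log X)^{-1/4}$, which is (iii).

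The crux is the arithmetic input: the local computation at the additive fibres needed to pin down the density-$\tfrac{1}{2}$ set $S$ (and, in (iii), the density-$\tfrac{3}{4}$ set coming from the $2$-descent of $E_{-1}$), together with a modular-degree lower bound of the form $\nu_2(m_{E_a})\ge\omega(a)-O(1)$ for Type-I curves following \cite{DK13}. Once these are available, the counting — Tur\'an--Kubilius for (i), Landau--Selberg--Delange for (ii) and (iii) — is routine.
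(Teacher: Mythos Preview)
There is a genuine gap in your argument for (i). You claim that $3$-descent yields $\mathrm{rank}(E_a(\Q))\le\omega_S(a)+O(1)$ for a density-$\tfrac12$ set $S$, but the Selmer bound coming from the isogeny $\phi:E_a\to E_{-27a}$ reads (cf.\ (\ref{eqn:sphi}), (\ref{eqn:sphi'}))
\[
\mathrm{rank}(E_a(\Q))\ \le\ r_3(K_a)+r_3(K_{-27a})+\#S_a+\#S_{-27a}+O(1),
\]
where $K_a=\Q(\sqrt{-3a})$ and $r_3$ is the $3$-rank of the class group. Your ``local analysis at the bad primes'' governs only the terms $\#S_a$; the global contributions $r_3(K_a)$, $r_3(K_{-27a})$ are \emph{not} $O(1)$ uniformly in $a$ --- the best known pointwise bound is of order $\log|a|/\log\log|a|$, which swamps $\omega(a)\sim\log\log|a|$. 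The paper flags exactly this obstruction in Section~\ref{sec:3descent} and does \emph{not} use pointwise descent for (i): it instead invokes the average-rank bound $\sum_{|a|\le X}\mathrm{rank}(E_a(\Q))=O(X)$ from \cite{3-iso}, and combines this with Tur\'an--Kubilius for $\omega(a)$ (not $\omega_{S^c}(a)$) to control the exceptional set. Your Tur\'an--Kubilius step is sound; the missing input is a replacement for the uncontrolled class-group term.

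Parts (ii) and (iii) are close to the paper's argument, with two caveats. In (ii) you should make explicit \emph{why} the class-group obstruction vanishes: for $a=kn^2$ the field $K_a=\Q(\sqrt{-3k})$ is fixed independently of $n$, so $r_3(K_a)=O_k(1)$ (this is Lemma~\ref{lem:sqcase}); only then does the descent collapse to $\mathrm{rank}(E_{kn^2})\le 2\#S_{kn^2}+O_k(1)$, after which restricting to $n$ built from primes with $\bigl(\tfrac{-3}{p}\bigr)=-1$ works as you say. In (iii) you correctly identify the family as the quadratic twists of $y^2=x^3-1$ and the mechanism ($2$-descent plus the \cite{DK13} lower bound), but your density-$\tfrac34$ set of admissible primes is asserted without any local computation. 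The paper, via \cite{JMR} (Lemma~\ref{lem:rank0}), establishes only that $\mathrm{rank}(E_0^{(D)})\le 1$ when every prime factor of $D$ is $\equiv 5\pmod{12}$, a set of density $\tfrac14$; Selberg--Delange then gives an exponent $(\log X)^{-3/4}$, not $(\log X)^{-1/4}$, so either a stronger $2$-descent input is needed or the exponent in the statement is optimistic.
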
 
As an immediate application, we deduce the following consequence for $\mathcal{E}_3$, taking $k=1$.
\begin{cor}\label{thm:3-torsion}
For any integer $M\geq 0$, infinitely many elliptic curves in $\mathcal{E}_{3}$ are $M$-Watkins. More specifically, the size of $\mathcal{E}_3(X)$ is approximately $\asymp X^2$, and the number of $M$-Watkins elliptic curves in $\mathcal{E}_3(X)$ has a lower bound of order $ \frac{X}{(\log X)^{1/2}}$.
\end{cor}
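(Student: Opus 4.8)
The plan is to read off Corollary~\ref{thm:3-torsion} from Theorem~\ref{thm:x3+k}(ii) with $k=1$. The one structural input is that the relevant Type-I curves carry a rational $3$-torsion point: for a nonzero integer $n$ the point $P=(0,n)$ lies on $E_{n^2}\colon y^2=x^3+n^2$, and the tangent line $y=n$ to $E_{n^2}$ at $P$ meets the curve only where $x^3=0$, i.e.\ at $P$ with multiplicity three, so $2P=-P$ and $P$ has order $3$. Hence $E_{n^2}(\Q)[3]\neq0$ and $E_{n^2}\in\mathcal{E}_3$; this is exactly why $k=1$ (so that $N_{1,m}=\{n^2:n\in\Z\}$) is the correct specialization. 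For the height, the model $y^2=x^3+0\cdot x+n^2$ gives $H(E_{n^2})\le |n^2|^{1/3}=|n|^{2/3}$ (passing to the minimal model, which for $A=0$ only removes common sixth-power factors from $B$, can only decrease $H$), so $|n|\le X$ forces $H(E_{n^2})\le X^{2/3}\le X$ and $E_{n^2}\in\mathcal{E}_3(X)$.

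Applying Theorem~\ref{thm:x3+k}(ii) with $k=1$ and with $X^2$ in place of $X$ (so that $|a|=n^2\le X^2$, i.e.\ $|n|\le X$) then yields
$$\#\big\{\,a=n^2:\ |a|\le X^2,\ E_a\text{ is }M\text{-Watkins}\,\big\}\ \gg_M\ \big(X^2/\log(X^2)\big)^{1/2}\ \asymp\ X/(\log X)^{1/2},$$
and each such $a$ produces an $M$-Watkins curve in $\mathcal{E}_3(X)$; allowing $|n|\le X^{3/2}$ would even give the stronger bound $\gg_M X^{3/2}/(\log X)^{1/2}$, since $H(E_{n^2})\le |n|^{2/3}\le X$ in that range. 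To count \emph{curves} rather than parameters, I would restrict $n$ to squarefree integers — harmless up to a constant factor, and easily respected by the construction behind Theorem~\ref{thm:x3+k}(ii): since $j(E_a)=0$ for every $a$, one has $E_{n^2}\cong_{\Q}E_{m^2}$ iff $n=\pm m u^3$ for some $u\in\Q^{\times}$, which for squarefree $m,n$ forces $n=\pm m$. Thus distinct squarefree $|n|$ give pairwise non-isomorphic $M$-Watkins curves in $\mathcal{E}_3(X)$, and letting $X\to\infty$ gives infinitely many $M$-Watkins curves in $\mathcal{E}_3$.

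For the density assertion $\#\mathcal{E}_3(X)\asymp X^2$ I would use Tate's normal form: every curve with a rational $3$-torsion point is, after translation, of the shape $y^2+a_1xy+a_3y=x^3$ with $a_3\neq0$, and passing to short Weierstrass form gives $A=-27(a_1^4-24a_1a_3)$ and $B=-54(-a_1^6+36a_1^3a_3-216a_3^2)$, homogeneous of weights $4$ and $6$ when $a_1$ has weight $1$ and $a_3$ weight $3$. Ordering by $H$ then reduces to counting $(a_1,a_3)\in\Z^2$ with $|a_1|\ll X^{1/2}$ and $|a_3|\ll X^{3/2}$, which is $\asymp X^2$; what remains is the routine accounting for minimal models and for the bounded fibres of $(a_1,a_3)\mapsto E$, parallel to the $\mathcal{E}_2$ case in Theorem~\ref{thm:x3+k}(iii). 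I do not expect a genuine obstacle here: being a corollary, the whole argument is assembly, and the only places needing care are this last sieve-type bookkeeping and the squarefree reduction used to pass from counting integers $a$ to counting isomorphism classes of curves.
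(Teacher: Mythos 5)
Your proposal is correct and follows essentially the same route as the paper: the lower bound is read off from Theorem~\ref{thm:x3+k}(ii) with $k=1$ (specializing to $a=n^2$, which lands in $\mathcal{E}_3$ as in \eqref{eqn:Eatype}), and $\#\mathcal{E}_3(X)\asymp X^2$ comes from a Lipschitz lattice-point count of the standard parametrization of curves with a rational $3$-torsion point, exactly as in \eqref{eqn:E3}. The extra details you supply (the explicit order-$3$ point $(0,n)$, the height bookkeeping $H(E_{n^2})=|n|^{2/3}$, and the squarefree/isomorphism-class reduction) are correct fillings-in of steps the paper leaves implicit, not a different argument.
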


Furthermore, we aim to enhance Theorem~\ref{thm:3-torsion} by proving that all elliptic curves within $\mathcal{E}_3$, $\mathcal{E}_5$, and $\mathcal{E}_7$ adhere to being $M$-Watkins, with $100\%$ certainty. The approach involves parametrizing the points in $\mathcal{E}_5$, and $\mathcal{E}_7$ using appropriate elliptic surfaces over $\Q(t)$. For the points in $\mathcal{E}_3$, it turns out that, \textit{except} for the \textit{Type-I} curves of the form $y^2=x^3+b^2,~b\in\Z$, all other points can also be parametrized by a suitable elliptic surface over $\Q(t)$. After addressing such \textit{Type-I} curves in the proof of part $(ii)$ in Theorem~\ref{thm:3-torsion} or Theorem~\ref{thm:x3+k}, we turn to studying the ranks and conductors and ranks of conductors of the elliptic curves, lying in a given $1$-parameter family of elliptic curves. Proposition~\ref{prop:maintool} is our main tool for studying the conductors, while for ranks, we rely on some of the \textit{standard} conjectures in the theory of elliptic curves mentioned in \cite{Silverman}. More specifically, we shall prove the following. 
\begin{theorem}\label{thm:isogeny}
The sizes of $\mathcal{E}_5(X)$ and $\mathcal{E}_7(X)$ are approximately $\asymp X$ and $\asymp X^{1/2}$, respectively. Assume that the conjectures $B, C$, and $D$ in \cite{Silverman} hold. Then for any integer $M$, and any prime $\ell \in \{3,5,7\}$, almost all the elliptic curves in $\mathcal{E}_{\ell}$ are $M$-Watkins.
\end{theorem}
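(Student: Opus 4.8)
The statement splits into the two size estimates and the $M$-Watkins assertion, which I would prove separately. For the sizes, the plan is to use that the modular curves $X_1(5)$ and $X_1(7)$ have genus $0$ with a rational point, so that each is isomorphic to $\mathbb{P}^1_{\Q}$ and carries a universal elliptic curve $\mathcal{E}_t$ over $\Q(t)$; up to a bounded-to-one correspondence (the $\ell-1$ points of exact order $\ell$ on a fibre, the finitely many cusps, and $\Q$-isomorphisms) this identifies $\mathcal{E}_\ell$ with $\{\mathcal{E}_t : t \in \Q\}$ for $\ell \in \{5,7\}$. Writing $t = u/v$ in lowest terms and homogenising, the coefficients of a Weierstrass model of $\mathcal{E}_t$ become binary forms $\tilde A(u,v), \tilde B(u,v)$ whose degrees one reads off from the Tate normal forms: for $\ell = 5$ the curve $y^2 + (1-t)xy - ty = x^3 - tx^2$ has $c_4, c_6$ of degrees $4$ and $6$ in $t$, while for $\ell = 7$ the Tate normal form with $b = t^3 - t^2$, $c = t^2 - t$ has $c_4, c_6$ of degrees $8$ and $12$. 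Hence $H(\mathcal{E}_t) \asymp \max(|u|,|v|)^2$ for $\ell = 5$ and $H(\mathcal{E}_t) \asymp \max(|u|,|v|)^4$ for $\ell = 7$, away from the sparse set of $(u,v)$ where $\tilde A$ and $\tilde B$ are simultaneously small and from the density-zero set where minimalisation changes $H$ by more than a bounded factor. Counting coprime lattice points then yields $\#\mathcal{E}_5(X) \asymp X$ and $\#\mathcal{E}_7(X) \asymp X^{1/2}$; this parallels the genus-zero cases of Harron--Snowden and the $\mathcal{E}_2,\mathcal{E}_3$ counts already obtained.

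For the $M$-Watkins assertion I would, for each $\ell \in \{3,5,7\}$, reduce to two claims: (i) $\nu_2(m_E) \to \infty$ as $H(E) \to \infty$ outside a density-zero subset of $\mathcal{E}_\ell$; and (ii) there is a constant $C_\ell$ with $\mathrm{rank}(E(\Q)) \le C_\ell$ outside a density-zero subset of $\mathcal{E}_\ell$. Together (i) and (ii) show that the set of $E \in \mathcal{E}_\ell$ with $\mathrm{rank}(E(\Q)) + M > \nu_2(m_E)$ has density zero, which is the theorem. For $\ell \in \{5,7\}$ the relevant object is the surface $\mathcal{E}_t/\Q(t)$ above; for $\ell = 3$ it is the surface parametrising $\mathcal{E}_3$ minus the \textit{Type-I} curves $y^2 = x^3 + b^2$, of which there are only $\asymp X^{3/2} = o(X^2) = o(\#\mathcal{E}_3(X))$, so they are negligible for a $100\%$ statement over $\mathcal{E}_3$ (and are separately handled in Theorem~\ref{thm:x3+k}).

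Claim (i) is where Proposition~\ref{prop:maintool} and Dummigan--Krishnamoorthy~\cite{DK13} come in, exactly as in the on-average argument of \cite{BKP23}. Since $\mathrm{rad}(N_{\mathcal{E}_t}) = \mathrm{rad}(\Delta_{\min}(\mathcal{E}_t))$, and $\Delta_{\min}(\mathcal{E}_t)$ differs from the value $\Delta(t)$ of a fixed polynomial by only a bounded factor outside a thin set of $t$, a standard sieve for the number of prime divisors of values of a fixed polynomial --- discarding the $O(1)$ primes dividing its leading coefficient, content and discriminant, together with the finitely many additive primes of the family --- gives $\omega(N_{\mathcal{E}_t}) \ge (1-\epsilon)\log\log H(\mathcal{E}_t)$ off a density-zero set; the Dummigan--Krishnamoorthy lower bound $\nu_2(m_E) \ge \omega(N_E) - O(1)$, whose hypotheses hold for our curves since they carry a rational $\ell$-isogeny, then forces $\nu_2(m_E) \to \infty$. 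Claim (ii) is where the conjectures $B$, $C$, $D$ of \cite{Silverman} are invoked: for a non-isotrivial one-parameter family they yield an upper bound on the rank of $100\%$ of the specialisations, namely that $\mathrm{rank}(\mathcal{E}_t(\Q))$ equals $\mathrm{rank}\,\mathcal{E}(\Q(t))$ or $\mathrm{rank}\,\mathcal{E}(\Q(t)) + 1$ according to the variation of the root number --- the matching lower bound being Silverman's unconditional specialisation theorem --- so one may take $C_\ell = \mathrm{rank}\,\mathcal{E}(\Q(t)) + 1$ for the family attached to $\ell$. Combining (i) and (ii) and restoring the negligible exceptional curves gives the theorem.

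The main obstacle is Proposition~\ref{prop:maintool} and its interaction with the height ordering: the curves in the family must be counted ordered by $H$ rather than by the naive size of the parameter, so the estimate on the number of prime divisors has to be run against the pushforward of the $H$-ordering to the parameter line --- which, as the $\mathcal{E}_3$ count already shows, is far from uniform in $\max(|u|,|v|)$ --- and one must verify that it is $\omega(N_{\mathcal{E}_t})$, not $\omega(\Delta(t))$, that governs the Dummigan--Krishnamoorthy bound, which requires controlling the repeated factors of $\Delta(t)$, the places of additive reduction, and the hypotheses of \cite{DK13}. The rank input is purely conditional, so it functions as a standing hypothesis rather than an obstacle; the delicate unconditional work is the uniform analytic control of $\omega(N_{\mathcal{E}_t})$ across the family.
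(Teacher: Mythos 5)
Your overall architecture is the same as the paper's: the counts for $\mathcal{E}_5(X)$ and $\mathcal{E}_7(X)$ come from the Harron--Snowden parametrization of $X_1(\ell)\cong\mathbb{P}^1$ (equation~(\ref{eqn:E5,E7}) and Lemma~\ref{lem:veryimp}); the growth of $\nu_2(m_E)$ is obtained by combining the Dummigan--Krishnamoorthy bound $\nu_2(m_E)\geq\omega(N(E))-O(1)$ with the fact that an irreducible factor of $\Delta_\ell(t)$ occurring with multiplicity one forces $\omega(s(\Delta_\ell(r)))$ to be large for almost all $r$ (Lemma~\ref{lem:irreducible}, Proposition~\ref{prop:maintool}, Lemma~\ref{lem:bound}); and the rank is controlled conditionally via Silverman's conjectures. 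You also correctly isolate the two genuine difficulties (transporting the $H$-ordering to the parameter line, and passing from $\omega(\Delta(t))$ to $\omega(N_E)$), and your treatment of the exceptional Type-I locus in $\mathcal{E}_3$ as a density-zero set is consistent with the paper.

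The one substantive gap is your claim (ii). Conjectures $B$, $C$, $D$ of \cite{Silverman}, as used here, yield an \emph{average} rank bound: $\sum_{E\in\mathcal{E}_\ell(X)}\mathrm{rank}(E)=O(\#\mathcal{E}_\ell(X))$ (Proposition~\ref{prop:avgrank}), whence by Markov the proportion of curves with rank at least $M$ is $O(1/M)$ (Lemma~\ref{lem:rankbound}). They do \emph{not} give your assertion that $100\%$ of specialisations have $\mathrm{rank}(\mathcal{E}_t(\Q))\in\{\mathrm{rank}\,\mathcal{E}(\Q(t)),\,\mathrm{rank}\,\mathcal{E}(\Q(t))+1\}$; that is a root-number-equidistribution plus minimalist-type statement, strictly stronger than an average bound (an average of $O(1)$ is compatible with a positive proportion of curves of any fixed large rank). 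Consequently there is no single constant $C_\ell$ bounding the rank off a density-zero set available from these hypotheses. The fix is routine and is what the paper does: for each $\varepsilon>0$ choose $M'$ with the rank-tail proportion below $\varepsilon$, note that all but $o(\#\mathcal{E}_\ell(X))$ curves satisfy $\omega(N(E))>M'+M+3$, and let $\varepsilon\to 0$. A minor further correction: the Dummigan--Krishnamoorthy lower bound requires no hypothesis about a rational $\ell$-isogeny; it holds for every elliptic curve over $\Q$, with the loss $\dim_{\Z/2\Z}(W/W')+\dim_{\Z/2\Z}E(\Q)[2]\leq 3$.
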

\subsection{Overview and strategies}
Section~\ref{sec:tools} provides a brief overview of the rank bounds, highlighting the relationships between modular degrees and ranks. Specifically, Corollary~\ref{cor:E2} boils down the study of Watkins's conjecture to understanding the ranks and conductors of elliptic curves. To get good enough bounds on the ranks over $\mathcal{E}_2$ and $\mathcal{E}_3$, we employ classical methods such as Descent via $2$ and $3$-isogenies. Specifically, Corollary~\ref{cor:rankmain} provides a sufficiently nice bound in terms of the conductors of the elliptic curves over $\mathcal{E}_2$, and in Section~\ref{sec:bounding2rank}, we further refine this through a sieving process in Lemma~\ref{lem:-mbound}, demonstrating that the rank bound in Corollary~\ref{cor:rankmain} holds in almost all cases. To count the number of elliptic curves in $\mathcal{E}_2$, we use Lipschitz's lattice point counting criteria in Section~\ref{sec:E2setup}. Furthermore, to understand $M$-Watkins property over $\mathcal{E}_2$, we study the quadratic twists of $E_0:y^2=x^3-1$ in Section~\ref{sec:low2}, using the techniques outlined in \cite{JMR} to determine when the ranks are at most $1$. 

To study $M$-Watkins over $\mathcal{E}_3$, we employ both conditional and unconditional approaches. First, we count the number of elliptic curves in $\mathcal{E}_3$ in Section~\ref{sec:E3setup}. For the unconditional approach to $M$-Watkins, we rely on the rank bounds for the family $y^2=x^3+a,~a\in \Z$, using the classical Descent via $3$-isogeny, along with the main result of \cite{3-iso}, which shows that a typical elliptic curve of $\textit{Type-I}$ has a small rank. For the approach through $3$-isogeny, we follow the rank bound discussed in Section~\ref{sec:3descent}. Specifically, the observation at Lemma~\ref{lem:sqcase} produces a lot of quadratic fields, whose $3$-torsion rank of class group is significantly smaller than the number of prime factors of the discriminant. Finally, we accomplish our goal through the unconditional approach in Section~\ref{sec:we3}.

For the conditional approach over $\mathcal{E}_3$, which is also our approach towards $\mathcal{E}_5$ and $\mathcal{E}_7$, we rely on the existence of a universal elliptic curve over parametrizing them. We shall briefly discuss this in Section~\ref{sec:univ}. Now the heart of the study involves two steps: first, showing that the average rank of the elliptic curves lying in a non-split elliptic surface is bounded. To prove this, we need to rely on various classical conjectures and extend the main result of Silverman in \cite{Silverman}, as briefly discussed in Section~\ref{sec:avgrank}. The second step involves studying the typical number of prime factors of the square-free parts of polynomial values, which we address in Proposition~\ref{prop:maintool}. Note the normal order of $\omega(n)$ is $\log \log n$, and this normal order is consistent for both $\omega(n)$ and $\omega(s(n))$. Following this principle, we extend the normal order of polynomial values from \cite[Thm.~3.2.3]{Murty-Alina}. The entire conditional approach is carried out in Section~\ref{sec:higher}.
\subsection{Notations}
We use the symbol $X$ as a quantifying parameter and $x$ for one of the coordinates of an elliptic curve. The symbol $r$ is used for rationals, and $t$ serves as an indeterminate for the polynomial rings. 

By $O_{S_1, S_2,\cdots, S_k}(B)$ we mean a quantity with absolute value at most $cB$ for some positive constant positive constant $c$ depending on $S_1,\cdots, S_K$ only; if the subscripts are omitted, the implied constant is absolute. We write $A\ll_{S_1,\cdots,S_K} B$ for $A =O_{S_1, S_2,\cdots, S_k}(B)$. We write $A=o(B)$ for $A/B\to 0$, and $A\asymp B$ for $A/B\to c$, for some absolute constant $c$.

Any natural number $n$ can be written as $n=n'\times \Box$, where $n'$ is the square-free part of $n$ with $(n',\Box)=1$. Then, we denote $s(n)$ to be the square-free part of $n$. For any integer $a$, we denote $\omega(a)$ and $s(a)$ respectively be the number of distinct prime factors, and the square-free part of $|a|$. Additionally, use the standard notation $\mu(a)$ to indicate the Möbius function of $|a|$. For any rational number $r=\frac{a}{b}$ in the lowest terms, we set $\omega(r)=\omega(a)+\omega(b)$, and $s(r)$ as $s(ab)$. Furthermore, for any finite set of primes $S$, we denote $\omega_{S}(r)$ to be the number of prime factors of $r$, that are not in $S$. Moreover, for any integer $n$, we denote $
\mathbb{P}^1(n) = \mathbb{P}^1(\mathbb{Z}/n\mathbb{Z})$, the projective line over the ring of integers modulo $n$. 

Given an elliptic curve $E/\Q$, we denote $\Delta(E)$ and $N(E)$ as the discriminant and conductor of $E$, respectively. Moreover, we denote $H(E)$ to be the height of the elliptic curves as in \ref{eqn:height}.

\subsection*{Acknowledgements}
We extend our sincere gratitude to IISER Thiruvananthapuram for providing an excellent research environment that significantly contributed to the success of this project. We are deeply thankful to the institute for their invaluable support. SB is supported by the institute fellowship of IISER Thiruvananthapuram. 

A part of this study was conducted during the workshop \textit{Rational points on Modular curves} at the International Centre for Theoretical Sciences. Furthermore, some parts of study was also conducted during the first author's visit to the Chennai Mathematical Institute, Mathematisches Forschungsinstitut Oberwolfach, and the University of Kiel. We are grateful to these institutions for their exceptional hospitality and support.

We would like to acknowledge CS Rajan for posing a thought-provoking question during the second author's presentation at ICTS, which inspired us to delve into the study of $M$-Watkins elliptic curves.

Our appreciation also extends to Loïc Merel and Sunil Kumar Pasupulati for their insightful discussions and contributions to this work.

\section{Tools with modular degrees and ranks}\label{sec:tools}
In this section, we will discuss the necessary materials to help count the elliptic curves satisfying Watkins's conjecture. 

\subsection{Dummigan, Krishnamoorthy's lower bound}\label{sec:DK}
For each $p^{\alpha}\mid \mid N$, consider the Atkin-Lehner involution $W_p$ on $X_0(N)$. The set $\left\{W_p~:~d\mid \mid N\right\}$ forms an abelian subgroup of automorphisms of rank equal to $\omega(N)$. For every  prime divisor $p \mid N$, there exists $w_p(f_E)\in \{\pm 1\}$ such that $W_p(f_E)=w_p(f_E) f_E$. 

We now consider the homomorphism $(\Z/2\Z)^{\omega(N)} \to \{\pm 1\}$ defined by, $W_d \mapsto w_d$. Let $W'$ denote the kernel of this map. According to Dummigan and Krishnamoorthy \cite[Prop.~2.1]{DK13}, there exists a homomorphism $W'\to E(\Q)[2]$ with kernel $W''$, and $m_E$ divides the order of $W''$.

The modular parametrization gives a map $\pi: J_0(n) \to E$. The key observation here is that if $w\in W'$, then $\pi(w(D))=\pi(D)$ for any $D\in J_0(N)$. Taking $P_0\in X_0(N)(\Q)$ and $D=P_0-\omega(P_0)\in J_0(N)$, we have $\pi(w(D))\in E(\Q)[2]$. This yields a map $W'\to E[2]$ with kernel $W''$. Consequently, considering $X''=X_0(N)/W''$ and $J''=J(X'')$, the map $\pi$ induces a map $\pi'':J'' \to E$ such that $\deg(\pi)=\# W''\deg(\pi'')$. Following this, Dummigan and Krishnamoorthy \cite[Prop.~2.1]{DK13} deduced the following estimate.
\begin{align*}
    \nu_2(m_E)\geq \nu_2\left(\frac{\#W'}{E(\Q)[2]}\right)
    &=\nu_2\left({\#W'}\right) -\nu_2\left({E(\Q)[2]}\right)\\
&=\omega(N(E))- \dim_{\Z/2\Z}(W/W')-\mathrm{dim}_{\mathbb{Z}/2\mathbb{Z}}(E(\mathbb{Q})[2]).
\end{align*}
In particular, we deduced the following, that will be used for the treatment over $\E_2$.
\begin{cor}\label{cor:E2}
   For any elliptic curve $E/\Q$ with $E(\Q)[2]=\Z/2\Z$, we have the following inequality
$$\nu_2(m_E) \geq \omega(N(E))-1-\dim_{\Z/2\Z}(W/W') \geq \omega(N(E))-2.$$
\end{cor}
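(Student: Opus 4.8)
The plan is to derive Corollary~\ref{cor:E2} directly from the inequality
\[
\nu_2(m_E)\;\geq\;\omega(N(E))-\dim_{\Z/2\Z}(W/W')-\dim_{\Z/2\Z}(E(\Q)[2])
\]
established just above in the discussion following \cite[Prop.~2.1]{DK13}. Under the hypothesis $E(\Q)[2]=\Z/2\Z$ we have $\dim_{\Z/2\Z}(E(\Q)[2])=1$, which immediately yields the first claimed inequality
\[
\nu_2(m_E)\;\geq\;\omega(N(E))-1-\dim_{\Z/2\Z}(W/W').
\]
So the only real content is the second inequality, i.e.\ showing $\dim_{\Z/2\Z}(W/W')\leq 1$, equivalently that the image of the Atkin--Lehner sign homomorphism $(\Z/2\Z)^{\omega(N)}\to\{\pm 1\}$, $W_d\mapsto w_d$, has $\Z/2\Z$-codimension at most $1$ in the target's ``source complement'' — more precisely that $W/W'$ injects into $\{\pm1\}$.

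The key step is to recall how $W'$ was defined: it is the \emph{kernel} of the homomorphism $(\Z/2\Z)^{\omega(N)}\to\{\pm1\}$, $W_d\mapsto w_d$. Hence by the first isomorphism theorem $W/W'$ is isomorphic to the image of this map, which is a subgroup of $\{\pm1\}$. Therefore $\dim_{\Z/2\Z}(W/W')\in\{0,1\}$, so in particular $\dim_{\Z/2\Z}(W/W')\leq 1$. Substituting this bound into the first inequality gives
\[
\nu_2(m_E)\;\geq\;\omega(N(E))-1-1\;=\;\omega(N(E))-2,
\]
which is exactly the second asserted inequality. That completes the proof.

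I do not anticipate a genuine obstacle here: this is a bookkeeping corollary that packages the Dummigan--Krishnamoorthy estimate in the form most convenient for the later arguments over $\E_2$, and everything needed — the explicit definition of $W'$ as a kernel mapping into $\{\pm1\}$, and the triviality bound $\dim_{\Z/2\Z}(E(\Q)[2])=1$ coming from the hypothesis $E(\Q)[2]=\Z/2\Z$ — is already in place in the excerpt. The one point worth stating carefully is that $W/W'\hookrightarrow\{\pm1\}$ forces its $\Z/2\Z$-dimension to be $0$ or $1$ (it cannot somehow be larger), but this is immediate from $|\{\pm1\}|=2$.
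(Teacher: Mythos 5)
Your proposal is correct and follows essentially the same route as the paper, which states the corollary as an immediate consequence of the displayed Dummigan--Krishnamoorthy inequality without further argument: one substitutes $\dim_{\Z/2\Z}(E(\Q)[2])=1$ from the hypothesis and observes that $W/W'$ injects into $\{\pm1\}$ (since $W'$ is defined as the kernel of the sign homomorphism), so $\dim_{\Z/2\Z}(W/W')\leq 1$. Your write-up simply makes explicit the two substitutions the paper leaves implicit.
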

\begin{remark}\rm\label{rem:imp}
Recall that for an elliptic curve $E_{A,B}$ in its minimal form as in (\ref{eqn:Emin}), the corresponding conductor $N(E_{A,B})$ is, up to some bounded factor of $2$ and $3$, the product of all primes $p$ dividing the discriminant $\Delta(E_{A,B})$. In light of Corollary~\ref{cor:E2}, our primary objective is to count the set of all such minimal $E_{A,B}$, for which $N(E_{A,B})$ is divisible by many large primes. Additionally, we also need to study the rank growth to study the $M$-Watkins elliptic curves.
\end{remark}
\subsection{Estimation of ranks over $\mathcal{E}_2$ and $\mathcal{E}_3$}
\subsection{Descent via 2-isogeny}\label{sec:2descent}
Let $E_{a,b}$ be of the form $y^2=x^3+ax^2+bx,~a,b\in \Z$, and $E'_{a,b}$ be the dual curve, defined by $y^2=x^3-2ax^2+(a^2-4b)x$. Now we briefly recall the method of descent via $2$-isogeny. Consider the spaces of the Homogeneous space of the form 
$C_{d_1,d_2,F}:Z^2 = d_1U^4 + F U^2V^2 + d_2V^4,$ and for any set of prime $T$ containing $\infty$, let us denote
$Q(T) := \{d \in \Q^{*}/(\Q^{*})^2 \mid \nu_p(d) = 0 \pmod 2,~\forall p \not\in T\}$.
Then, we have the following key bound from \cite[Eqn.~(5)]{RPC}:
   \begin{equation}\label{eqn:2rankbound-}
    \rank\left(E(\Q)\right) \leq \dim_{\F_2}\Sel^{\phi}(E)+ \dim_{\F_2}\Sel^{\hat{\phi}}(E')-2,
 \end{equation}
where the Selmer groups are precisely of the following forms.
\begin{equation}\label{eqn:1sphi}
\Sel^{\phi}(E_{a,b})=\{d \in Q(T_1) : C_{d,\frac{a^2-4b}{d},-2a}(\Q_p)\neq \phi,~\forall p \mid 2b(a^2-4b) \},
\end{equation}
\begin{equation}\label{eqn:1sphi'}
\Sel^{\hat{\phi}}(E'_{a,b})=\{d \in Q(T_2) : C_{d,\frac{b}{d},a}(\Q_p)\neq \phi,~\forall p \mid 2b(a^2-4b) \},
\end{equation}
where $T_1$ (resp. $T_2$) is the set of prime factors of $a^2-4b$ (resp. $b$), containing $\infty$. It follows from the proof of \cite[Lem.~2.1]{RPC} that, 
\begin{equation}\label{eqn:eitheror}
\Sel^{\phi}(E_{a,b})\subset Q(T_1\setminus \{\infty\}),~\mathrm{or}~\Sel^{\hat{\phi}}(E'_{a,b})\subset Q(T_2\setminus \{\infty\}).
\end{equation} 
With this observation, we deduce the following.
\begin{cor}\label{cor:rankmain}
    Let $a,b$ be any two co-prime integers for which $\Sel^{\phi}(E_{a,b})\subset Q(T\setminus \{p\})$ for some finite prime $p$. Then, we have
   \begin{equation}\label{eqn:2rankbound}
   \rank\left(E_{a,b}(\Q)\right) \leq \omega(N(E_{a,b}))-2.
\end{equation} 
\end{cor}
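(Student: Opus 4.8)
The strategy is to feed the $2$-isogeny descent inequality \eqref{eqn:2rankbound-} into the conductor estimate behind Remark~\ref{rem:imp}, using the dichotomy \eqref{eqn:eitheror} together with the hypothesis to strip off exactly the two excess dimensions that separate the crude Selmer bound from the target \eqref{eqn:2rankbound}.

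First I would pin down the ambient groups. Let $T_1$ (resp.\ $T_2$) be the set consisting of $\infty$ together with the primes dividing $a^2-4b$ (resp.\ $b$), so that by \eqref{eqn:1sphi} and \eqref{eqn:1sphi'} one has $\Sel^{\phi}(E_{a,b})\subseteq Q(T_1)$ and $\Sel^{\hat{\phi}}(E'_{a,b})\subseteq Q(T_2)$, with $\dim_{\F_2}Q(T_1)=\omega(a^2-4b)+1$ and $\dim_{\F_2}Q(T_2)=\omega(b)+1$. The hypothesis is non-trivial precisely when the prime $p$ divides $a^2-4b$ (otherwise $Q(T\setminus\{p\})=Q(T_1)$), which I assume; then $\dim_{\F_2}\Sel^{\phi}(E_{a,b})\le\omega(a^2-4b)$. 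Now I split on \eqref{eqn:eitheror}. If $\Sel^{\hat{\phi}}(E'_{a,b})\subseteq Q(T_2\setminus\{\infty\})$, then $\dim_{\F_2}\Sel^{\hat{\phi}}(E'_{a,b})\le\omega(b)$, while we keep $\dim_{\F_2}\Sel^{\phi}(E_{a,b})\le\omega(a^2-4b)$. If instead $\Sel^{\phi}(E_{a,b})\subseteq Q(T_1\setminus\{\infty\})$, intersecting with the hypothesis gives $\Sel^{\phi}(E_{a,b})\subseteq Q(T_1\setminus\{\infty,p\})$, so $\dim_{\F_2}\Sel^{\phi}(E_{a,b})\le\omega(a^2-4b)-1$, while $\dim_{\F_2}\Sel^{\hat{\phi}}(E'_{a,b})\le\omega(b)+1$. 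In both cases the two Selmer dimensions sum to at most $\omega(a^2-4b)+\omega(b)$, and \eqref{eqn:2rankbound-} then gives $\rank(E_{a,b}(\Q))\le\omega(a^2-4b)+\omega(b)-2$.

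It remains to bound $\omega(a^2-4b)+\omega(b)$ by $\omega(N(E_{a,b}))$. Because $\gcd(a,b)=1$ we have $\gcd(b,a^2-4b)=\gcd(b,a^2)=1$, so $\omega(a^2-4b)+\omega(b)=\omega\!\big(b(a^2-4b)\big)$. The model $y^2=x^3+ax^2+bx$ has $\Delta(E_{a,b})=16\,b^2(a^2-4b)$, and for a prime $\ell\mid b(a^2-4b)$ the coprimality of $a$ and $b$ forces $\ell\nmid c_4(E_{a,b})$, so the model is minimal at $\ell$ and $E_{a,b}$ has (multiplicative) bad reduction there; hence every prime dividing $b(a^2-4b)$ divides $N(E_{a,b})$, i.e.\ $\omega\!\big(b(a^2-4b)\big)\le\omega(N(E_{a,b}))$, which is the content of Remark~\ref{rem:imp} in this case. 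Combining with the previous paragraph gives $\rank(E_{a,b}(\Q))\le\omega(N(E_{a,b}))-2$, which is \eqref{eqn:2rankbound}.

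I expect the only delicate point to be this last step at the primes $2$ and $3$. For $\ell\ge5$ the assertion $\ell\nmid c_4(E_{a,b})$ is immediate from $\gcd(a,b)=1$, but at $\ell=2$ and $\ell=3$ one must check by hand that the displayed model does not become non-minimal with good reduction, which is exactly the ``up to a bounded factor of $2$ and $3$'' subtlety flagged in Remark~\ref{rem:imp}; everything else reduces to a short computation of the dimensions of the groups $Q(S)$ and the case split above.
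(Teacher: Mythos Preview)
Your argument is correct and follows exactly the paper's strategy: bound the two Selmer dimensions using the dichotomy \eqref{eqn:eitheror} together with the hypothesis, feed this into \eqref{eqn:2rankbound-}, and then convert $\omega(a^2-4b)+\omega(b)$ to $\omega(N(E_{a,b}))$ via $\gcd(a,b)=1$ and $\Delta(E_{a,b})=16b^2(a^2-4b)$. Your explicit case split on \eqref{eqn:eitheror} is in fact more careful than the paper's one-line invocation (which as printed reaches only $\omega(a^2-4b)+\omega(b)-1$ rather than the $-2$ your analysis correctly delivers), and the caveat you raise about the primes $2$ and $3$ in the final inequality is a fair point that the paper leaves implicit.
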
 
\begin{proof}
Given that $\Sel^{\phi}(E_{a,b})\subset Q(T\setminus \{p\})$ for some finite prime $p$, it follows immediately from equations (\ref{eqn:2rankbound-}) and (\ref{eqn:eitheror}) that $\rank\left(E_{a,b}(\Q)\right) \leq \omega(a^2-4b)+\omega(b)-1$. Since $a,b$ are co-prime, we also have $\omega(b(a^2-4b))=\omega(b)+\omega(a^2-4b)$. In other words, $\rank\left(E_{a,b}(\Q)\right) \leq \omega(b^2(a^2-4b))-1$. On the other hand, $\Delta(E_{a,b})=16b^2(a^2-4b)$, which shows that $\omega(b^2(a^2-4b))\leq \omega(N(E_{a,b}))$. This completes the proof.
\end{proof}

\subsection{Descent via $3$-isogeny}\label{sec:3descent}
Let $E/\Q$ be an elliptic curve, equipped with $\phi: E \to E'$ a rational isogeny of degree
$3$, with the dual isogeny $\hat{\phi}:E'\to E$ satisfying $\phi \circ \hat{\phi}=[3]$. Arguing similarly as in \cite[Secc.~2]{RPC} for the $3$-isogeny $\phi$, we get 
   \begin{equation}\label{eqn:3rankbound}
    \rank\left(E(\Q)\right) \leq \dim_{\F_3}\Sel^{\phi}(E)+ \dim_{\F_3}\Sel^{\hat{\phi}}(E').
 \end{equation}
To understand the Selmer groups in (\ref{eqn:3rankbound}), let us mention \cite[Sec.~1]{JMS} that the equation of $E$ can be expressed either in the form of a \textit{Type-I curve} $E_a : y^2 = x^3 + a$, or of a \textit{Type-II curve} $E_{a,b} : y^2 = x^3 + a(x-b)^2$. In this article, we will focus on the \textit{Type-I} case due to its computational simplicity. 
 
\subsubsection{Treatment for Type-I}\label{sec:type-I}
We follow the discussion from \cite[Sec.~1.1]{JMS}. Consider the elliptic curve denoted by the equation $E_a: y^3 = x^2 + a$. Notably, we observe that the discriminant of $E_a$ is $\Delta(E_a) = -2^4 \cdot 3^2 \cdot a^2$, and its $j$-invariant is $j(E_a) = 0$. Additionally, the curve possesses complex multiplication by the field $\mathbb{Q}(\sqrt{-3})$.
The set $C=\{ O, (0,\sqrt{a}), (0,-\sqrt{a})\}$ is a subgroup of $E(\overline{\Q})$ of order $3$. Let us then consider the $3$-isogeny $\phi:E_{a}\to E'_a$, where $E'_a=E_{-27a}$, and denote $\hat{\phi}_a$ be the corresponding dual isogeny $\hat{\phi}_a:E'_a\to E_a$. Let us set $K_a$ to be the number field $\Q(\sqrt{-3a})$, and denote $\mathcal{O}_{K_a}$ to be its ring of integers.

\begin{definition}
Let $S$ be any finite set of finite primes of $\mathcal{O}_{K_a}$. We define \begin{align*}
    H(S)=\Big\{x \in K_a^{\times}/\left(K^{\times}_a\right)^3 \Big \vert \nu_p(x)\equiv 0\pmod{3},~\text{for any} ~p\notin S  \Big\}.
\end{align*}
\end{definition}
From \cite[Lemm.~3.4]{BA04}, we have 
\begin{equation}\label{eqn:sbound}
\dim_{\F_3} \left(H(S)\right) \leq r_3(K_a)+\dim_{\F_3}\left( U_{K_a}/U_{K_a}^3\right)+\# S, 
\end{equation}
where $r_3(K)$ denotes the $3$-rank of the ideal class group of $K$, and $U_K$ denotes the group of units of $K$. 

For any integer $a$, consider $S_a$ be the set of primes defined by: $2,3\in S_a$, and a prime $p\neq 2,3$ is in $S_a$ if and only if, $\nu_p(a)=2,4 ~ \text{and} ~ \left(\frac{-3}{p}\right)=1$. Combining (\ref{eqn:sbound}) with \cite[Thm.~3.5]{BA08}, we have 
\begin{equation}\label{eqn:sphi}
\dim_{\F_3}(\Sel^{\phi}(E_a))\leq r_3(K_a)+\dim_{\F_3}\left( U_{K_a}/U_{K_a}^3\right)+\# S_a.
\end{equation}
Furthermore, arguing the same for the dual $E'_a:=E_{-27a}$, we have the bound
\begin{equation}\label{eqn:sphi'}
\dim_{\F_3}(\Sel^{\hat{\phi}}(E'_a))\leq r_3(K_{-27a})+\dim_{\F_3}\left( U_{K_{-27a}}/U_{K_{-27a}}^3\right)+\#S_{-27a}.
\end{equation}
\subsubsection{3-torsion ideal classes of quadratic fields} To bound the Selmer ranks in (\ref{eqn:sphi}) and (\ref{eqn:sphi'}), a key requirement is to establish bounds on the $3$-torsion ranks of the class groups of quadratic fields. We denote $h_3(K_a)$ to be the size of the $3$-torsion part. Of course, $r_3(a)=\log_3(h_3(K_a))$. It is conjectured \cite[Conj.~1]{Ellen07} that $h_3(K_a)=O_{\varepsilon} (|a|^{\varepsilon})$, or equivalently, $r_3(K_a)=O_{\varepsilon}(\log |a|)$, for any $\varepsilon>0$. For the bound on $h_3$, Helfgott-Venkatesh~\cite[Thm.~4.2]{HV} obtained the exponent $0.45$, and later, Ellenberg and Venkatesh improved it to $|a|^{\frac{1}{6}+\varepsilon}$. However, the pointwise bound is not sufficient for us, as we need to focus on $a$ for which $r_3(K_a)=O(\omega(a))$. Since $\omega(a)$ is typically of order $\log \log |a|$, we aim for a stronger phenomenon here. It is perhaps worth noting that Kishi and Miyake in \cite{Miyake} provided criteria for $r_3(K_a)=1$, though this appears challenging for quantitative analysis. One thing is to note that, $r_3(K_a)$ is just $O(1)$ over the \textit{almost} squares $a$. For the general case, while Heilbronn, Davenport~\cite{HD} would suffice for our purposes, we will instead use \cite{3-iso} regarding the average rank of elliptic curves of \textit{Type-I}.
%\begin{cor}\label{theoremdh}
%When quadratic fields are ordered by their absolute
%discriminants, the average $3$-torsion rank of the class groups of quadratic fields is bounded.
%\end{cor} 
%\begin{proof}
%    Since $r_3(K_a)\leq h_3(K_a)$, the result follows immediately combines both parts (a) and (b) of \cite[Thm.~1]{bhargava}.
%\end{proof}

\section{Setting up the ground work for counting with heights}\label{sec:setup}
For any pair $(A,B)\in \Z^2$, denote $E_{A,B}$ be the elliptic curve $y^2=x^3+Ax+B$. Recall that we are arranging them with respect to the height function $H(E_{A,B})=\max\left\{|A|^3,|B|^2\right\}^{1/6}$, and studying the proportion of all such $(A,B)$, for which $E_{A,B}$ satisfies the Watkins's conjecture. Note that there are $\asymp X^5$ many elliptic curves of height at most $X$. For each $\ell\in \{2,3,5,7\}$, the main aim of this section is to discuss how many of the elliptic curves with height at most $X$ have $\ell$-torsion. In Section~\ref{sec:2torsion} and Section~\ref{sec:higher}, we will further investigate how often these curves satisfy Watkins's conjecture, or in general, the $M$-Watkins property.

\subsection{Counting over $\E_2$}\label{sec:E2setup}

We know \cite[Lem.~5.1, part (a)]{HS17} that an elliptic curve $E_{A,B}$ has a rational point of order $2$ if and only if there exists $a,b \in \Z$ such that $A=a, B=b^3+ab$. This leads us to consider the lattice points in the region $R_2(X)=\Big\{(a,b)\in\R^2:|a|<X^2\text{ and }|b^3+ab|<X^3\Big\}$. In other words, we have $\mathcal{E}_2(X)=R_2(X)\cap \Z^2$. By the principle of Lipschitz \cite{DA}, we have 
\begin{equation}\label{eqn:E2}
\#\E_2(X)=\# (R_2(X)\cap \Z^2)\sim \mathrm{Vol}(R_2(1))X^3.
\end{equation}
Moreover, \cite[Lem.~5.2]{HS17} shows that where the numerical value of $\mathrm{Vol}(R_2(1))$ is precisely given by 
\begin{equation}\label{eqn:vol}
2\log(\alpha_{-}/\alpha_{+}) + \frac{4}{3}(\alpha_{+} + \alpha_{-}),~\mathrm{where~\alpha_{\pm}~is~the~unique~real~root~of}~x^3 \pm x - 1.
\end{equation}

In this section, let us denote $\mathcal{E}'_2$ be the set of all elliptic curves $E_{a,b}:=y^2=x^3+ax^2+bx$. First of all, let us note that any $E_{A,B}\in \mathcal{E}_2$ can be written in the Weierstrass form $E_{-3b,~3b^2+a} := y^2 = x^3 - 3bx^2 + (3b^2+a)x,~a, b \in \Z$. This shows that $\mathcal{E}_2 \subset \mathcal{E}'_2$. Let us arrange the elliptic curves in $\mathcal{E}'_2$, with respect to the natural height function defined by $H_2(E_{a,b})=\max\left\{|a|^2,|b|\right\}^{1/2}$. With this, we set 
$$R'_2(X)=\Big\{(a,b)\in \R^2: |a|\leq X,~|b|\leq X^2\Big\},~\mathcal{E}'_2(X)=R'_2(X)\cap \Z^2.$$ 
Note that $H(E_{A,B})\leq X$ if $|a|\asymp X^2$ and $|b|\asymp X$, where $A=a,~B=b^3+ab$. In other words, $H(E_{A,B})\leq X$ if and only if $H_2(E_{-3b,3b^2+a})\asymp X$. In particular, the number of elements in $\mathcal{E}_2(X)$ and
$\mathcal{E}'_2 (X)$ are asymptotically comparable to $X^3$, i.e., $\#\mathcal{E}_2(X)\asymp \#\mathcal{E}'_2(X)\asymp X^3$. 

Furthermore, let us consider  $\Z^2_{1}=\{(A,B): \mathrm{gcd}(A,B)=1\}$, and set
\begin{equation*}\label{eqn:coprime}
    \mathcal{E}_{1,2}(X)=\mathcal{E}_2(X)\cap \Z^2_1,~\mathcal{E}'_{1,2}(X)=\mathcal{E}_2(X)\cap \Z^2_1.
\end{equation*}
The requirement of considering co-prime pairs arises due to Corollary~\ref{cor:rankmain}. Since we have $\#\mathcal{E}_{1,2}(X)\asymp \#\mathcal{E}'_{1,2}(X)\asymp X^3$, \textit{to conclude the proof of Theorem~\ref{thm:uncond}, it suffices to work with the set $\mathcal{E}'_{1,2}=\bigcup\limits_{X\in \R}\mathcal{E}_{1,2}(X)$.}

\subsection{Counting over $\E_3$}\label{sec:E3setup}
We know from \cite[Lem.~5.1, part (b)]{HS17} that $E_{A,B}$ has a rational point of order $3$ if and only if there exists $a, b \in \Z$ such that $A=6ab+27 a^4$ and $B=b^2-27a^6$. In the same spirit as before, we consider
$R_3(X)=\{(a,b)\in\R^2:|6ab+27a^4|<X^{2}\text{ and }|b^2-27a^6|<X^{3}\}$. Again, by the principle of Lipschitz \cite{DA}, we have 
\begin{equation}\label{eqn:E3}
\E_3(X)=\# (R_3(X)\cap \Z^2)\asymp X^2.
\end{equation}
To study Watkins's conjecture for the points in $\mathcal{E}_3$, we will utilize conjectures such as the Generalized Riemann Hypothesis (GRH). However, for unconditional analysis, we hall focus on the \textit{Type-I} family of elliptic curves given by $E_a: y^2 = x^3 + a$, as described briefly in Section~\ref{sec:type-I}. It turns out that such an $E_a$ is in $\E_3$ when $a$ is a square. We organize the elliptic curves $\{E_a\}$ based on their standard height $H(E_a)=|a|^{1/3}$. Notably, there are $8X^3$ many integers $a\in \Z$ satisfying $H(E_a)\leq X$.

\subsection{Universal family for $\E_3,\E_5,\E_7$}\label{sec:univ}
Once we study Watkins's conjecture over $\mathcal{E}_2$, i.e. the points briefly described in Section~\ref{sec:E2setup}, it is sufficient to focus only on cases where the torsion group is $\mathrm{G}=E(\Q)_{\mathrm{tor}}$ is non-trivial, and $2\nmid \# \mathrm{G}$. In the sense of \cite[Sec.~1.4.2]{HS17}, there is a universal elliptic curve $\mathcal{E}$ over an open subset of $\mathbb{A}^1$, which parametrizes the elliptic curves whose group of rational points contains $G$. Then the universal property \cite[Sec. 1.4.2]{HS17} gives an elliptic surface $\mathcal{E} \to \mathbb{A}^1$, capturing the required property with $\ell=3,5$ or $7$. Let us discuss it briefly, as we need know about the parametrizations.

Let $S$ be a scheme. An elliptic curve over $S$ is a map
$\pi : E \to S$, together with a section $\sigma : S \to E$, such that every fiber of $\pi$ is a smooth projective curve of genus $1$. We denote $Y_1(N)$ to be the curve, whose points are given by
$Y_1(N )(S) = \{(E/S, P/S)~|~N \cdot P = 0\},$
and consider $X_1(N)$ be the Deligne-Mumford compactification of $Y_1(N)$. It turns out that, $X_1(N)$ is a scheme for any $N>3$. Since $X_1(5), X_1(7)$ are of genus $0$, there exists an isomorphisms 
$\varepsilon_5:X_1(5)\to \mathbb{A}^1(\Q),~\varepsilon_7:X_1(7)\to \mathbb{A}^1(\Q)$. These isomorphisms also give rise to the polynomials $f_5,g_5,f_7,g_7\in \Z[t]$, parametrizing almost all the elements of $\mathcal{E}_5$ and $\mathcal{E}_7$. Combining the data in \cite[Table 2]{HS17} and \cite[Prop.~3.2]{HS17}, we have $\mathrm{deg}(f_5)=4,~\mathrm{deg}(g_5)=6$, and $\mathrm{deg}(f_7)=8,~\mathrm{deg}(g_7)=12$. In particular, it turns out from \cite[Table 1]{HS17} that 
\begin{equation}\label{eqn:E5,E7}
\mathcal{E}_{5}(X)\sim X,~\mathcal{E}_{7}(X)\sim X^{1/2}.
\end{equation}

On the other hand, even though $X_1(3)$ is not scheme, almost all the points $X_1(3)(\Q)$ can still be paramterized by polynomials $f_3,g_3$ with $\mathrm{deg}(f_3)=1,\mathrm{deg}(g_3)=2$. The exceptional points in $X_1(3)(\Q)$ that can not be parametrized, are $\textit{Type-I}$ curves of the form $y^2=x^3+a$, where $a$ is a square. For more details, the reader may refer to \cite[Sec.~3.3]{HS17}

\section{Watkins's conjecture over $\E_2$ and $\E_3$}\label{sec:2torsion}
Let us recall the discussions on rank bound from Section~\ref{sec:2descent}. Our objective is to examine the lack of local solubility within a specific family of homogeneous spaces. Throughout this section, we shall follow the terminologies as in Section~\ref{sec:E2setup}.
\subsection{Bounding Selmer ranks}\label{sec:bounding2rank}
In this section, we want to eliminate sufficiently many hall divisors $d$ of $a^2-4b$ for which $C_{d,\frac{a^2-4b}{d}}(\Q_p)=\phi$ for some prime $p\mid a^2-4b$. 
\begin{lemma}\label{lem:phisriteria}
Let $p > 3$ be a prime dividing $b$ such that $p\nmid a$ and $d$ be a square-free hall divisor of $a^2-4b$. Then $C_{d,\frac{a^2-4b}{d}}(\Q_p)= \phi$ if and only if the following two conditions hold.
\begin{itemize}
    \item[(i)] $\left(\frac{d}{p}\right)=-1$
    \item[(ii)] $\nu_p(b)=\mathrm{odd}~\mathrm{or}~\left(\frac{a}{p}\right)=1$
\end{itemize}
\end{lemma}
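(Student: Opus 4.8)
The plan is to analyze the local solubility of the genus-one curve
$$C_{d,\frac{a^2-4b}{d},-2a}: Z^2 = dU^4 - 2aU^2V^2 + \tfrac{a^2-4b}{d}V^4$$
over $\Q_p$ for a prime $p>3$ with $p\mid b$, $p\nmid a$, and $d$ a square-free Hall divisor of $a^2-4b$. Since $p>3$ is odd and (by $p\mid b$, $p\nmid a$) $p\nmid \Delta$ in the relevant sense for the quartic's reduction behaviour, I would first reduce modulo $p$ and use Hensel's lemma: a smooth $\F_p$-point lifts to a $\Q_p$-point, so the question becomes whether the reduction (and its non-smooth locus, handled separately by working in the ramified/higher-valuation directions) has a suitable point. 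The key case split is on $\nu_p(d)$: since $d\mid a^2-4b$ and $\nu_p(a^2-4b)=\nu_p(b)$ (as $p\nmid a$ forces $\nu_p(a^2-4b)=\nu_p(-4b)=\nu_p(b)$), the prime $p$ divides exactly one of $d$ and $(a^2-4b)/d$, or neither, according to whether $\nu_p(b)$ is odd, and if even $p$ may divide neither since $d$ is square-free.

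First I would handle the case $p\nmid d$ and $p\nmid (a^2-4b)/d$ (which needs $\nu_p(b)$ even). Reducing mod $p$, the curve becomes $Z^2 = dU^4 - 2aU^2V^2 + \tfrac{a^2-4b}{d}V^4 \pmod p$, and modulo $p$ we have $a^2-4b\equiv a^2$, so the right side factors as $d\big(U^2 - \tfrac{a}{d}V^2\big)^2$ up to the constant — more precisely $dU^4 - 2aU^2V^2 + \tfrac{a^2}{d}V^4 = \tfrac1d(dU^2-aV^2)^2$. Hence $Z^2 \equiv \tfrac1d(dU^2-aV^2)^2 \pmod p$. Setting $V=1$, $U=0$ gives $Z^2\equiv a^2/d$; this is solvable iff $d$ is a square mod $p$, i.e. $\left(\tfrac{d}{p}\right)=1$. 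When $\left(\tfrac{d}{p}\right)=-1$ one must check there is no other point: any $\F_p$-point forces $d$ to be a square times $(dU^2-aV^2)^2/V^4$... one has to be careful when $dU^2 - aV^2 \equiv 0$, i.e. the point lies on the singular locus of the reduced quartic, and there a finer analysis in $\Q_p$ (expanding $dU^2 - aV^2 = p\cdot(\text{unit})$ and tracking $\nu_p$ of the two sides) shows solubility is governed exactly by whether $\left(\tfrac{a}{p}\right)$ (equivalently the lower-order term's square class) is $+1$; this is where condition (ii)'s alternative $\left(\tfrac{a}{p}\right)=1$ enters and where $\nu_p(b)$ odd versus even matters.

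Next I would do the case $p\mid d$ (so $\nu_p(d)=1$ by square-freeness, whence $\nu_p((a^2-4b)/d)=\nu_p(b)-1$). Here I rescale: write $d = p d'$ and clear denominators / substitute to move to an integral model, then reduce mod $p$ again. The leading coefficient drops out mod $p$, leaving $Z^2 \equiv -2aU^2V^2 + \tfrac{a^2-4b}{d}V^4 \pmod p$, i.e. $V^2(-2aU^2 + \tfrac{a^2-4b}{d}V^2)$; solubility analysis then again depends on the square class of $-2a$ and of $\tfrac{a^2-4b}{d}$ mod $p$, and these combine (via quadratic reciprocity bookkeeping and the relation $a^2-4b \equiv a^2 \bmod p$, so $\tfrac{a^2-4b}{d}\equiv \tfrac{a^2}{d} = \tfrac{a^2}{pd'}$, which has $\nu_p = -1 + \nu_p(a^2-4b)$ shifted) to the stated criterion. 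The symmetric case $p\mid (a^2-4b)/d$ (forcing $\nu_p(b)$ odd and $p\nmid d$) is dual and handled the same way. Throughout, I expect the main obstacle to be the boundary/singular-point analysis: when the reduced quartic acquires a singular point mod $p$, naive Hensel fails and one must carefully stratify $\Q_p$-points by their valuations (Newton polygon of the quartic in $U/V$ with $Z$), and it is precisely this stratification that produces the clean equivalence "$\left(\tfrac{d}{p}\right)=-1$ together with ($\nu_p(b)$ odd or $\left(\tfrac{a}{p}\right)=1$)" characterising emptiness. I would also double-check the edge effects at the extra bad primes $p=2,3$ are genuinely excluded by hypothesis, and that the identity $\nu_p(a^2-4b)=\nu_p(b)$ used repeatedly holds under $p\mid b$, $p\nmid a$, $p\neq 2$.
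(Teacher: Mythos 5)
Your overall strategy --- a direct local analysis of the quartic over $\Q_p$ via reduction mod $p$ and Hensel lifting --- is a legitimate alternative to the paper's proof, which simply verifies the hypotheses of the explicit local-solubility criterion in \cite[Lem.~3.1]{RPC}. However, as written your argument contains a concrete error and leaves the essential step unproved. The error: you assert $\nu_p(a^2-4b)=\nu_p(-4b)=\nu_p(b)$, but the opposite holds. Since $p\mid b$, $p\nmid a$ and $p>3$, we have $a^2-4b\equiv a^2\not\equiv 0\pmod p$, so $\nu_p(a^2-4b)=0$. Consequently $p$ divides neither $d$ nor $\frac{a^2-4b}{d}$, your ``key case split on $\nu_p(d)$'' collapses, and the two cases ``$p\mid d$'' and ``$p\mid (a^2-4b)/d$'' that you analyze at length are vacuous. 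The prime $p$ is a bad prime for $C_{d,\frac{a^2-4b}{d},-2a}$ not because it divides $d_1d_2=a^2-4b$ but because it divides $F^2-4d_1d_2=4a^2-4(a^2-4b)=16b$.

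The gap: in the one case that actually occurs, the entire content of condition (ii) is deferred to ``a finer analysis \ldots\ shows solubility is governed exactly by whether $\left(\frac{a}{p}\right)$ is $+1$,'' which is precisely what the lemma claims and what must be proved. The missing computation is the following. Rewrite the equation as $\left(dU^2-aV^2\right)^2-dZ^2=4bV^4$. If $\left(\frac{d}{p}\right)=-1$, any primitive $\Z_p$-point must satisfy $p\mid dU^2-aV^2$, $p\mid Z$ and $p\nmid V$, which forces $a/d$ to be a square mod $p$; this is impossible exactly when $\left(\frac{a}{p}\right)=1$, giving emptiness in that subcase. If instead $\left(\frac{a}{p}\right)=-1$, the left-hand side is a value of the norm form of the unramified quadratic extension $\Q_p(\sqrt{d})$ (as $d$ is a unit nonresidue) and therefore has even $p$-adic valuation, so a solution forces $\nu_p(b)$ even; conversely, when $\nu_p(b)$ is even, surjectivity of the norm on units together with Hensel's lemma (solving $U^2=(a+p^kw_0)/d$, whose reduction $a/d$ is a nonzero square) produces a point. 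Without this stratified valuation argument, the stated equivalence is asserted rather than established.
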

\begin{proof}
Our goal is to ensure that none of the conditions in part (1) of \cite[Lem.~3.1]{RPC} are satisfied.

Given that $p \mid b$, it is important to observe that $\left(\frac{d}{p}\right)=-1$ if and only if $\left(\frac{\frac{a^2-4b}{d}}{p}\right)=-1$. Due to the condition in $(i)$, we see that the condition (a) in part (1) of \cite[Lem.~3.1]{RPC} is not satisfied, with $d_1=d$ and $d_2=\frac{a^2-4b}{d}$. 

On the other hand, it is worth noting that the negation of condition (b) in part (1) of \cite[Lem.~3.1]{RPC} is equivalent to having $\nu_p(b) = \mathrm{odd}$ or $\left(\frac{a}{p}\right) = 1$, both of which are the assumptions that we have already made.
\end{proof}
Now the idea is that we count the number of $(a,b)$ satisfying the property: there exists at least one prime $q \mid s(a^2-b)$, for which there is a prime $p$ with $\left(\frac{q}{p}\right)=-1$ and $\left(\frac{a}{p}\right)=1$. For this, it is enough to eliminate the pairs $(a,b)$ with the property: given any pair of signs $(\sigma_1,\sigma_2)$, for any prime $q \mid s(a^2-b)$, and for any prime $p$ satisfying $\left(\frac{a}{p}\right)=\sigma_2$, we have $\left(\frac{q}{p}\right)=\sigma_1$. In particular, our job will be done by taking any pairs of signs $(\sigma_1,\sigma_2)\neq (-1,1)$.

\begin{lemma}\label{lem:-mbound}
Let $\sigma_2\in \{\pm 1\}$ be any sign, $a\in \Z$ be an integer which is not a square, and $p$ be a prime with $\left(\frac{a}{p}\right)=\sigma_2$. Then, we have the following estimate, as we take $X\to \infty$
$$N_a(X)=\#\Big\{b\in \Z:|b|\leq X,~q~\mathrm{prime}\mid s(a^2-b)\implies \left(\frac{q}{p}\right)=\sigma_1\Big\}=O\left((X+a^2)^{1/2}+\frac{X+a^2}{(\log (X+a^2))^{1/2}}\right),$$
where the implicit constant is absolute. In particular, we have the estimate as $X\to \infty$
\begin{equation}\label{eqn:mainestimate}
\Big\{E_{a,b}: H_2(E_{a,b})\leq X,~q~\mathrm{prime}\mid s(a^2-b)\implies \left(\frac{q}{p}\right)=\sigma_1\Big\}=O\left(\frac{X^{3}}{\log X}\right),
\end{equation}
where the implicit constant is absolute.
\end{lemma}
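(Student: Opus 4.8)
The plan is to estimate $N_a(X)$ by observing that the condition ``every prime $q \mid s(a^2-b)$ satisfies $\left(\frac{q}{p}\right)=\sigma_1$'' is highly restrictive. First I would fix $a$ and set $n = a^2 - b$, so that as $b$ ranges over $|b|\le X$ the value $n$ ranges over an interval $I$ of length $2X$ centered at $a^2$, hence $|n| \le X + a^2$; counting the admissible $b$ is the same as counting admissible $n \in I$. Writing $n = s(n)\cdot \square$, the hypothesis says $s(n)$ is built only from primes $q$ lying in the set $P_{\sigma_1} = \{q : \left(\frac{q}{p}\right)=\sigma_1\}$, which by quadratic reciprocity / Chebotarev is a union of arithmetic progressions modulo (roughly) $p$ or $4p$ of relative density exactly $1/2$ among all primes (the prime $p$ itself contributes a negligible amount, and the sign $\sigma_2$ on $a$ plays no role in the count except that it is consistent with such a $p$ existing). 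So I am counting integers $n$ in an interval whose square-free part has \emph{all} of its prime factors in a set of primes of density $1/2$.

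The main step is then a classical sieve / Landau–Selberg–Delange type estimate: the number of $n \le Y$ whose square-free kernel $s(n)$ has all prime factors in a set $\mathcal{P}$ of primes of natural density $\delta$ is $O\!\left(Y^{1/2} + Y/(\log Y)^{1-\delta}\right)$. Here is the heuristic I would make rigorous: either $n$ is of the form $m^2$ with $m$ arbitrary — there are $O(Y^{1/2})$ such — or $n$ has a prime factor to the first power, forcing $s(n) > 1$ and in particular $n = k \cdot \square$ with $k$ squarefree, $k>1$, and $k$ composed only of primes in $\mathcal{P}$. Splitting according to the squarefree kernel $k$ and using $\sum_{k \le Y,\ p\mid k \Rightarrow p \in \mathcal{P}} \frac{1}{\sqrt{k}} \ll (\log Y)^{\delta}$ (a standard consequence of Mertens' theorem for the restricted set of primes, via Rankin's trick or partial summation on $\sum_{p\in\mathcal P} 1/p = \delta\log\log Y + O(1)$) together with the bound $\#\{n\le Y : k \mid\mid n,\ n/k = \square\} \ll \sqrt{Y/k}$, I get the count is $\ll \sqrt{Y}\sum_k 1/\sqrt k \ll \sqrt Y (\log Y)^{\delta}$; a slightly more careful version distinguishing whether the ``square part'' is large gives the stated $\sqrt Y + Y/(\log Y)^{1-\delta}$. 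With $\delta = 1/2$ and $Y = X + a^2$ this yields exactly $N_a(X) = O\!\left((X+a^2)^{1/2} + (X+a^2)/(\log(X+a^2))^{1/2}\right)$, with an absolute implied constant since the density $1/2$ and the structure of $\mathcal{P}$ are uniform in the choice of $p$ and $a$.

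For the second estimate \eqref{eqn:mainestimate}, I would sum $N_a(X)$ over the relevant range of $a$. Since $H_2(E_{a,b}) \le X$ means $|a| \le X$ and $|b| \le X^2$, I apply the first part with $X$ replaced by $X^2$ for each $a$ with $|a|\le X$, and note $X^2 + a^2 \asymp X^2$ uniformly in that range, so $N_a(X^2) = O\!\left(X + X^2/(\log X)^{1/2}\right)$. Summing over the $O(X)$ values of $a$ gives $O\!\left(X^2 + X^3/(\log X)^{1/2}\right) = O\!\left(X^3/(\log X)^{1/2}\right)$. I note the exponent stated in the lemma is $\log X$ rather than $(\log X)^{1/2}$, which only weakens the claim, so the bound above suffices a fortiori; alternatively one squeezes the extra power by being wasteful, but it is not needed.

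The hard part will be making the density-$1/2$ input genuinely uniform and handling the square-part bookkeeping cleanly: one must be careful that the constant in ``$\sum_{p\in\mathcal P, p\le Y} 1/p = \tfrac12\log\log Y + O(1)$'' is absolute (it is, by the prime number theorem for the two relevant residue classes, or even by a Chebotarev/Selberg sieve bound with an explicit error), and that the decomposition $n = (\text{kernel})\cdot(\text{square})$ does not secretly reintroduce primes outside $\mathcal P$ through the square part — it can, so the correct statement is only about $s(n)$, and the upper-bound sieve counts $n$ by first fixing $s(n)=k$ (composed of primes in $\mathcal P$) and then bounding the number of $n\le Y$ with that squarefree kernel by $\sqrt{Y/k}$, which is where the $(\log Y)^{1/2}$ saving comes from and why a genuine saving over the trivial bound $Y$ appears. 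This is exactly the type of restricted-kernel counting that underlies the normal-order results the paper invokes later (cf. the discussion around $\omega(s(n))$), so the machinery is standard; the only real work is assembling it with absolute constants.
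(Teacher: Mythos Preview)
Your approach is correct and is essentially the same as the paper's: both reduce to counting integers $m$ with $|m|\le Y:=X+a^2$ whose squarefree part is supported on a set of primes of density $1/2$, via the decomposition $m=s(m)\cdot(\text{square})$ together with the Landau--Wirsing estimate $\#\{n\le Y:\ p\mid n\Rightarrow p\in\mathcal P\}\asymp Y/(\log Y)^{1/2}$. The only cosmetic difference is the order of summation: the paper fixes the square part $n^2$ first and bounds the number of admissible squarefree parts by $\ll (Y/n^2)/(\log(Y/n^2))^{1/2}$, then splits $n\le Y^{1/4}$ versus $n>Y^{1/4}$; you fix the squarefree kernel $k$ first and count squares up to $Y/k$. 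One caveat: your intermediate claim $\sum_{k\le Y,\ p\mid k\Rightarrow p\in\mathcal P} k^{-1/2}\ll(\log Y)^{\delta}$ is false as stated (that bound holds for $\sum k^{-1}$, not $\sum k^{-1/2}$; the correct size is $\asymp Y^{1/2}/(\log Y)^{1-\delta}$ by partial summation), but since you immediately defer to ``a slightly more careful version'' yielding $Y/(\log Y)^{1-\delta}$, the final bound is right. Your deduction of \eqref{eqn:mainestimate} by summing $N_a(X^2)$ over $|a|\le X$ matches the paper exactly, and your observation that this actually gives $O(X^3/(\log X)^{1/2})$, stronger than the stated $O(X^3/\log X)$, is correct.
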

\begin{proof}
  Given any $a$ and $p$ as per the requirement, the set of primes $q$ with $\left(\frac{q}{p}\right)=1$, has density $1/2$. In other words, we need to demonstrate that for a rare integer $b$, all the prime factors of $s(a^2-b)$ belong to a set of primes with density $1/2$. For any parameter $y$, the number of integers $0<n\leq y$ for which all the prime factors of $n$ belong to a set of primes with density $1/2$ grows asymptotically like $\frac{y}{\sqrt{\log y}}$. This observation shows that
 \begin{align}\label{eqn:estimate}
N_a(X)&\leq \sum_{n\leq (X+a^2)^{1/2}}\frac{1}{n^2}\frac{X+a^2}{(\log (X+a^2)-\log n^2)^{1/2}}\nonumber\\
&=\sum_{(X+a^2)^{1/4}<n\leq (X+a^2)^{1/2}}\frac{1}{n^2}\frac{X+a^2}{(\log (X+a^2)-\log n^2)^{1/2}}+\sum_{n\leq (X+a^2)^{1/4}}\frac{1}{n^2}\frac{X+a^2}{(\log (X+a^2)-\log n^2)^{1/2}}.
\end{align}
It is evident that the first sum in (\ref{eqn:estimate}) is $O((x+a^2)^{1/2})$. Moreover, the second sum in (\ref{eqn:estimate}) is $O\left(\frac{X+a^2}{(\log (X+a^2))^{1/2}}\right)$, and this gives the desired estimate for the quantity $N_a(X)$. 

Let us now recall from Section~\ref{sec:E2setup} that $H_2(E_{a,b})\leq X$ implies $|a|\leq X$ and $|b|\leq X^2$. The required estimate at (\ref{eqn:mainestimate}) then follows directly by estimating the sum $\sum\limits_{a\leq X}N_a(X^2)$.
  \end{proof}
Consequently, we obtain the following implication on the $\phi$-Selmer ranks.
\begin{cor}\label{cor:main}
    For a positive proportion of elliptic curves in $\E_2$, we have 
$\mathrm{rank}(E(\Q))\leq \omega(N(E))-2$.

\end{cor}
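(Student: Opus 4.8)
The goal is to prove Corollary~\ref{cor:main}: for a positive proportion of elliptic curves in $\E_2$ (equivalently, in $\E'_{1,2}$ by Section~\ref{sec:E2setup}), one has $\mathrm{rank}(E(\Q))\leq \omega(N(E))-2$. The plan is to combine Corollary~\ref{cor:rankmain} with the sieving estimate of Lemma~\ref{lem:-mbound}. Recall that Corollary~\ref{cor:rankmain} gives the desired rank bound as soon as $\Sel^{\phi}(E_{a,b})\subset Q(T\setminus\{p\})$ for some finite prime $p$ (here $a,b$ are coprime). So it suffices to show that for a positive proportion of coprime pairs $(a,b)$ with $H_2(E_{a,b})\leq X$, there is a finite prime $p\mid a^2-4b$ whose contribution to $\Sel^{\phi}(E_{a,b})$ can be removed — that is, some square-free Hall divisor $d$ of $a^2-4b$ divisible by $p$ fails to give a locally soluble homogeneous space at $p$.

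First I would reduce to a local non-solubility criterion: by Lemma~\ref{lem:phisriteria}, for a prime $p>3$ with $p\mid b$, $p\nmid a$, and $p\mid s(a^2-4b)$ (note $p\mid b$ forces $p\mid a^2-4b$, and since $a,b$ coprime $p\nmid a$ automatically), the homogeneous space $C_{d,\frac{a^2-4b}{d}}(\Q_p)$ is empty precisely when $\left(\frac{d}{p}\right)=-1$ and ($\nu_p(b)$ odd or $\left(\frac{a}{p}\right)=1$). Thus, to force $\Sel^{\phi}(E_{a,b})$ to miss a Hall divisor class at $p$, it is enough to produce a prime $q\mid s(a^2-4b)$ and a prime $p\mid b$ (or more flexibly $p\mid a^2-4b$) with $\left(\frac{q}{p}\right)=-1$ and $\left(\frac{a}{p}\right)=1$; one then takes $d$ to be the Hall divisor supported exactly at $q$ (adjusted by sign), which lies outside $Q(T\setminus\{q\})$, so the Selmer group is forced into $Q(T\setminus\{q\})$ and Corollary~\ref{cor:rankmain} applies with that prime. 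Actually the cleanest route is exactly the dichotomy already set up before Lemma~\ref{lem:-mbound}: it suffices to count the pairs $(a,b)$ for which \emph{no} such configuration of signs exists, and show this bad set has density zero.

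The main work is then Lemma~\ref{lem:-mbound}, which is already proved in the excerpt: for fixed non-square $a$ and fixed prime $p$ with $\left(\frac{a}{p}\right)=\sigma_2$, the number of $b$ with $|b|\leq X$ such that every prime $q\mid s(a^2-b)$ satisfies $\left(\frac{q}{p}\right)=\sigma_1$ is $O\big((X+a^2)^{1/2}+(X+a^2)/(\log(X+a^2))^{1/2}\big)$, and summing over $a\leq X$ gives $O(X^3/\log X)$ bad pairs among the $\asymp X^3$ elliptic curves of height $\leq X$ in $\E'_2$ (with the minor variant that the relevant polynomial is $a^2-4b$ rather than $a^2-b$, which changes nothing in the density argument, and that one restricts to coprime pairs, still a positive proportion). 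Since the total count $\#\mathcal{E}'_{1,2}(X)\asymp X^3$, removing an $O(X^3/\log X)$ exceptional set leaves a subset of positive proportion; discarding also the density-zero set of pairs with $a$ a perfect square is harmless. For every curve in the remaining positive-density set, by choosing $(\sigma_1,\sigma_2)\neq(-1,1)$ appropriately and invoking Dirichlet/Chebotarev to locate a suitable prime $p$ (with $p\mid a^2-4b$ or $p\mid b$, $\left(\frac{a}{p}\right)=1$) together with a prime $q\mid s(a^2-4b)$ with $\left(\frac{q}{p}\right)=-1$, Lemma~\ref{lem:phisriteria} yields an empty homogeneous space, hence $\Sel^{\phi}(E_{a,b})\subset Q(T\setminus\{q\})$, and then Corollary~\ref{cor:rankmain} gives $\mathrm{rank}(E_{a,b}(\Q))\leq \omega(N(E_{a,b}))-2$.

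The step I expect to be the main obstacle is the bookkeeping that converts the purely combinatorial/analytic count of Lemma~\ref{lem:-mbound} into an actual guarantee that a usable prime $p$ exists for the curves \emph{outside} the exceptional set: one must be careful that "not all prime factors of $s(a^2-4b)$ lie in the density-$1/2$ set cut out by a fixed $p$" is arranged uniformly in a way compatible with $p$ ranging over primes dividing $a^2-4b$ (or $b$) with $\left(\frac{a}{p}\right)=1$, rather than a single fixed $p$. The honest way to handle this is to note that the complement condition — "for \emph{every} choice of $p$ with $\left(\frac{a}{p}\right)=1$, all primes $q\mid s(a^2-4b)$ have $\left(\frac{q}{p}\right)=1$" — is even more restrictive than the single-$p$ condition bounded in Lemma~\ref{lem:-mbound}, so the $O(X^3/\log X)$ bound is a fortiori valid, and one also needs existence of at least one such $p$ for each good curve, which follows since $a$ is a non-square and thus has primes $p$ with $\left(\frac{a}{p}\right)=1$ in any arithmetic progression of moduli dividing $b(a^2-4b)$ by Chebotarev — a routine but slightly delicate point to state precisely. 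With this in hand the proof concludes immediately, and in fact this is exactly the mechanism that Lemma~\ref{lem:-mbound} was designed to feed into.
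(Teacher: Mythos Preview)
Your approach is essentially the paper's own: combine Lemma~\ref{lem:phisriteria}, Lemma~\ref{lem:-mbound}, and Corollary~\ref{cor:rankmain}. The paper's proof is in fact more terse than yours and glosses over precisely the bookkeeping point you flag. One wobble worth correcting: your appeal to Chebotarev at the end is misplaced, since you need a prime $p$ \emph{dividing} $b$ (a finite condition), not merely a prime in a progression; the clean fix is to choose, for each non-square $a$, a fixed prime $p>3$ with $\left(\frac{a}{p}\right)=1$ depending only on $a$, and then either restrict to $b$ with $p\mid b$ (still a positive proportion) or note that condition (ii) of Lemma~\ref{lem:phisriteria} is already satisfied via $\left(\frac{a}{p}\right)=1$ once $p\mid b$ is arranged.
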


\begin{proof}
 Lemma~\ref{lem:-mbound} implies that, for almost all pairs $(a,b)\in \Z^2$, there exist primes $p,q$ (dependent on $a,b$) such that $\nu_q(a^2-4b)=\mathrm{odd}$, and the divisor $d=q^{\nu_q(a^2-4b)}$ satisfies $\left(\frac{d}{p}\right)=-1$. Specifically, Lemma~\ref{lem:phisriteria} implies $C_{d,\frac{a^2-4b}{d}}(\Q_p)=\phi$. Hence, the proof follows combining Lemma~\ref{lem:-mbound} and Corollary~\ref{cor:rankmain}.
\end{proof}
To prove the main result of this section, we need one more result.
\begin{lemma}\label{lem:z2}
    The $2$-torsion group for $100\%$ of the all elliptic curves in $\E_2$ is $\Z/2\Z$.
\end{lemma}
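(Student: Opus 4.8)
The plan is to show that the elliptic curves $E_{a,b} \in \mathcal{E}'_2$ having full $2$-torsion, i.e. $E_{a,b}(\Q)[2] = \Z/2\Z \times \Z/2\Z$, constitute a density-zero subfamily when ordered by $H_2$, so that $100\%$ of curves in $\mathcal{E}'_2$ (and hence in $\mathcal{E}_2$, by the asymptotic comparison in Section~\ref{sec:E2setup}) have $E(\Q)[2] = \Z/2\Z$. The point is that for $E_{a,b}: y^2 = x^3 + ax^2 + bx$, the full $2$-torsion is rational precisely when the cubic $x^3 + ax^2 + bx = x(x^2 + ax + b)$ splits completely over $\Q$, i.e. when the quadratic $x^2 + ax + b$ has a rational root, equivalently when the discriminant $a^2 - 4b$ is a perfect square (a nonnegative one, automatically integral).

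First I would parametrize: $a^2 - 4b = \square$ forces $b = \frac{a^2 - k^2}{4}$ for some integer $k \ge 0$ with $k \equiv a \pmod 2$. So the curves in $\mathcal{E}'_2$ with full rational $2$-torsion are indexed by pairs $(a,k)$ with the parity constraint, and the corresponding $b$ satisfies $|b| \le \frac{a^2 + k^2}{4}$. Next I would count: within the box $R'_2(X) = \{|a| \le X,\ |b| \le X^2\}$, which contains $\asymp X^3$ lattice points, how many pairs $(a,b)$ arise this way? For each fixed $a$ with $|a| \le X$, the admissible $b$ have the form $(a^2 - k^2)/4$ with $|a^2 - k^2| \le 4X^2$, hence $k^2 \le 4X^2 + a^2 \le 5X^2$, giving $O(X)$ choices of $k$ and therefore $O(X)$ choices of $b$. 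Summing over $|a| \le X$ yields at most $O(X^2)$ such pairs. Since $\# \mathcal{E}'_2(X) \asymp X^3$, the full-$2$-torsion curves form a proportion $O(X^2/X^3) = O(1/X) \to 0$.

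Finally I would conclude: a curve $E_{a,b} \in \mathcal{E}'_2$ always has the rational $2$-torsion point $(0,0)$, so $E(\Q)[2] \supseteq \Z/2\Z$ always; it equals $\Z/2\Z$ unless the full $2$-torsion is rational, and by the count above the latter happens for a density-zero set. Transferring back to $\mathcal{E}_2$ via the birational correspondence $E_{A,B} \leftrightarrow E_{-3b, 3b^2+a}$ and the asymptotic $\#\mathcal{E}_2(X) \asymp \#\mathcal{E}'_2(X) \asymp X^3$ of Section~\ref{sec:E2setup} (noting that this correspondence preserves the $2$-torsion subgroup, being an isomorphism of elliptic curves over $\Q$), the density-zero exceptional set stays density-zero. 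Hence $100\%$ of curves in $\mathcal{E}_2$ have $2$-torsion group exactly $\Z/2\Z$. The only mild subtlety — not really an obstacle — is bookkeeping the difference between counting in $\mathcal{E}'_2$ with height $H_2$ versus in $\mathcal{E}_2$ with height $H$, but this is already handled by the comparison of counting functions established in Section~\ref{sec:E2setup}, so no new idea is needed there.
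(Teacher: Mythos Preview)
Your proposal is correct and follows essentially the same approach as the paper: both reduce to showing that the pairs $(a,b)$ with $|a|\le X$, $|b|\le X^2$ and $a^2-4b$ a perfect square number only $O(X^2)$, against a total of $\asymp X^3$. The paper organizes the count by fixing the square $S=a^2-4b$ (at most $O(X)$ choices, each giving $O(X)$ pairs), whereas you fix $a$ and count admissible $k$; this is the same double count in the other order, and your added remark on transferring from $\mathcal{E}'_2$ back to $\mathcal{E}_2$ just makes explicit what the paper leaves implicit.
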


\begin{proof}
    It is enough to establish that, for $100\%$ of all pairs $(a,b)\in \Z^2$, the corresponding quadratic equation $x^2+ax+b=0$ has no rational solutions. In particular, this means that the discriminant $a^2-4b$ is not a perfect square. This assertion is evident since, for any fixed square $S$, there are at most $O(X)$ pairs $(a,b)\in \Z^2$ satisfying $|a|\leq X$, $|b|\leq X^2$, and $a^2-4b=S$. However, since there are only $O(X)$ choices for $S$, this completes the argument.
\end{proof}

\subsection*{Proof of Theorem~\ref{thm:uncond}}
Lemma~\ref{lem:z2} and Corollary~\ref{cor:E2} imply that, for almost all elliptic curves $E$ in $\E_2$, we have $\omega(N(E))-2\leq \nu_2(m_E)$. Therefore, the result can be deduced from Corollary~\ref{cor:main}.
\qed

%An elliptic curve $E_{A,B}\in \mathcal{E}_2$, if and only if there exists $a,b \in \Z$ such that $A=a,~B=b^3+ab$. This shows that, $H(E_{A,B})\leq X$ if $|a|\asymp X^2$ and $|b|\asymp X$. In other words, if $H(E_{A,B})\leq X$ if and only if $H_2(E_{-3b,3b^2+a})\asymp X$.
\subsection{$M$-Watkins over $\mathcal{E}_2$} For any integer $M$, and for each integer $a$ which is not a perfect square, we consider the following set of primes $\mathbb{P}_a=\{p~\mathrm{prime}: \left( \frac{a}{p}\right)=1\}$. We pick the least prime $p>3$ from $\mathbb{P}_a$ and denote it as $P(a)$. For any integer $i$, we also denote $q_i$ to be the $i$th least prime for which $\left( \frac{q_i}{p}\right)=-1$, and set $d_i=q_i,~\forall 1\leq i\leq M$. We then consider $N_{a}=\Big\{b\in \Z: \nu_{q_i}(a^2-4b)=1,~\forall 1\leq i\leq M\Big\}.$
\begin{lemma}\label{lem:mbound}
  For any integer $a$ that is not a square, and and for any $b\in N_{M}(a)$ which is co-prime to $a$, we have the following rank bound $$\mathrm{rank}(E_{a,b}(\Q))\leq \omega(N(E_{a,b}))-M.$$  
\end{lemma}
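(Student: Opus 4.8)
The plan is to exploit the same descent-via-2-isogeny bound as in Corollary~\ref{cor:rankmain}, but now refined so that we discard $M$ distinct primes from the relevant support set instead of just one. Recall from \eqref{eqn:2rankbound-} and \eqref{eqn:eitheror} that
$\mathrm{rank}(E_{a,b}(\Q)) \le \dim_{\F_2}\Sel^{\phi}(E_{a,b}) + \dim_{\F_2}\Sel^{\hat\phi}(E'_{a,b}) - 2$,
with at least one of the two Selmer groups contained in $Q(T\setminus\{\infty\})$ for the appropriate set $T$. The point is that for $b \in N_M(a)$ coprime to $a$, each of the $M$ primes $q_1,\dots,q_M$ divides $a^2-4b$ exactly once, and — by the choice $\left(\frac{q_i}{p}\right)=-1$ together with $\left(\frac{a}{p}\right)=1$ where $p = P(a)$ — Lemma~\ref{lem:phisriteria} applies to each $q_i$ (note $p \mid b$ is \emph{not} what we need here; rather we use that $q_i \mid a^2-4b$ with odd valuation and the quadratic-residue conditions force the homogeneous space $C_{d_i, (a^2-4b)/d_i}(\Q_p)$ to have no $\Q_p$-points, so $d_i = q_i$ cannot lie in $\Sel^{\phi}(E_{a,b})$).

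First I would make precise that each $q_i$ (and each product of a subset of the $q_i$'s with fixed residue symbol at $p$) is excluded from whichever Selmer group sits inside $Q(T\setminus\{\infty\})$; since the $q_i$ are $M$ distinct primes and the excluded classes span an $M$-dimensional $\F_2$-subspace of the relevant $Q(T)$, this cuts the corresponding Selmer dimension down by at least $M$. Concretely, if $\Sel^{\phi}(E_{a,b}) \subset Q(T_1\setminus\{\infty\})$ then $\dim_{\F_2}\Sel^{\phi}(E_{a,b}) \le \omega(a^2-4b) - M$, and combining with $\dim_{\F_2}\Sel^{\hat\phi}(E'_{a,b}) \le \omega(b) + 1$ (the $+1$ absorbing the archimedean place, exactly as in the proof of Corollary~\ref{cor:rankmain}) and \eqref{eqn:2rankbound-} gives $\mathrm{rank}(E_{a,b}(\Q)) \le \omega(a^2-4b) + \omega(b) - 1 - M$. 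The symmetric case, where instead $\Sel^{\hat\phi}(E'_{a,b}) \subset Q(T_2\setminus\{\infty\})$, is handled identically after checking that the analogue of Lemma~\ref{lem:phisriteria} for the dual curve also kills $q_1,\dots,q_M$ from $\Sel^{\hat\phi}$ — this requires the same residue-symbol bookkeeping, and one should verify that the conditions defining $N_M(a)$ were chosen so that $\left(\frac{q_i}{p}\right)=-1$ is precisely the obstruction on the dual side as well.

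Then, as in Corollary~\ref{cor:rankmain}, I would pass from $\omega(a^2-4b) + \omega(b)$ to $\omega\big(b^2(a^2-4b)\big) \le \omega(N(E_{a,b}))$ using coprimality of $a$ and $b$ and the fact that $\Delta(E_{a,b}) = 16\,b^2(a^2-4b)$ so that every prime dividing $b^2(a^2-4b)$ (other than $2$, $3$, which are bounded factors) divides the conductor. This yields $\mathrm{rank}(E_{a,b}(\Q)) \le \omega(N(E_{a,b})) - M$, as claimed. I expect the main obstacle to be the careful accounting in the "either/or" dichotomy of \eqref{eqn:eitheror}: one must confirm that the $M$ primes $q_1,\dots,q_M$ lie in the support set $T$ attached to \emph{whichever} Selmer group is the one contained in $Q(T\setminus\{\infty\})$, and that the excluded square-free divisors genuinely span an $M$-dimensional subspace rather than collapsing — i.e.\ that no nontrivial product $\prod_{i\in I} q_i$ sneaks back into the Selmer group. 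This is where the precise quadratic-reciprocity conditions built into the definition of $d_i$ and $N_M(a)$ do the work, and it is worth spelling out that the residue symbol at $p$ is multiplicative, so a product of an even number of the $q_i$ has symbol $+1$ at $p$ and must still be checked — but such products are handled because they also have even valuation contributions cancelling, or are excluded by a parallel application of Lemma~\ref{lem:phisriteria} at a second auxiliary prime if needed.
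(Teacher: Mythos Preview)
Your overall skeleton matches the paper's: bound $\dim_{\F_2}\Sel^{\phi}$ by $\omega(a^2-4b)-M$ via Lemma~\ref{lem:phisriteria}, bound $\dim_{\F_2}\Sel^{\hat\phi}$ trivially by $\omega(b)$, combine via \eqref{eqn:2rankbound-}, and pass to $\omega(N(E_{a,b}))$ using coprimality of $a,b$ together with $\Delta(E_{a,b})=16b^2(a^2-4b)$. But the paper does \emph{not} split into the two alternatives of \eqref{eqn:eitheror}; the entire saving of $M$ is extracted on the $\phi$-side, and $\Sel^{\hat\phi}$ is bounded naively. Your ``symmetric case,'' in which you propose to kill $q_1,\dots,q_M$ from $\Sel^{\hat\phi}$, cannot work as written: by construction the $q_i$ divide $a^2-4b$ and hence lie in $T_1$, not $T_2$, so they impose no constraint on $\Sel^{\hat\phi}(E'_{a,b})\subset Q(T_2)$ at all. (Your parenthetical that ``$p\mid b$ is not what we need'' is also backwards --- Lemma~\ref{lem:phisriteria} is stated precisely for primes $p\mid b$.)

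The issue you raise at the end is real, and your patch does not close it. Applied at the single prime $p=P(a)$, Lemma~\ref{lem:phisriteria} yields the single $\F_2$-linear condition $\bigl(\tfrac{d}{p}\bigr)=1$ on $\Sel^{\phi}\subset Q(T_1)$; one linear functional cuts the dimension by at most one, not by $M$. A product of an even number of the $q_i$ has Legendre symbol $+1$ at $p$ and is not excluded by that condition; there is no ``second auxiliary prime'' in the construction, and no valuations cancel since each $\nu_{q_i}(a^2-4b)=1$. The paper's own proof is equally brief here --- it simply asserts that Lemma~\ref{lem:phisriteria} forces $\dim_{\F_2}\Sel^{\phi}\le\omega(a^2-4b)-M$ and moves on --- so to align with it you should drop the case split and record that bound directly. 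If you want to genuinely justify the $-M$, the missing ingredient is a local analysis at the primes $q_i$ themselves (which divide $a^2-4b$, hence lie outside the hypothesis of Lemma~\ref{lem:phisriteria} as stated), not a second application at~$p$.
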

\begin{proof}
Lemma~\ref{lem:phisriteria} implies that the $\phi$-Selmer rank in (\ref{eqn:1sphi}) is at most $\omega(a^2-4b)-M$. Furthermore, the $\hat{\phi}$-Selmer rank in (\ref{eqn:1sphi'}) of course at most $\omega(b)$. Since we assume that $a,b$ are co-prime, $\omega(b(a^2-4b))\leq \omega(N(E_{a,b}))+1$. The result now follows from the rank bound at (\ref{eqn:2rankbound}).
\end{proof} 
Now, we set the quantity $d_1d_2\cdots d_M$ to be $D_M(a)$, and then we have the following estimation of $D_M(a)$.
\begin{lemma}
 For any integer $a\equiv 2 \pmod 5$ that is not a square, we have $D_M(A)\asymp M^M$.   
\end{lemma}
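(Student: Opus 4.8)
The plan is to estimate $D_M(a) = d_1 d_2 \cdots d_M = q_1 q_2 \cdots q_M$, where $q_i$ is the $i$-th smallest prime satisfying $\left(\frac{q_i}{p}\right) = -1$ with $p = P(a)$ the least prime $> 3$ in $\mathbb{P}_a = \{p : \left(\frac{a}{p}\right) = 1\}$. The key simplification comes from the hypothesis $a \equiv 2 \pmod 5$: since $2$ is a quadratic non-residue mod $5$, we have $\left(\frac{5}{a}\right)$-type information, but more directly, $\left(\frac{a}{5}\right) = \left(\frac{2}{5}\right) = -1$, so $5 \notin \mathbb{P}_a$; on the other hand one checks (via quadratic reciprocity, since $5 \equiv 1 \pmod 4$) that $\left(\frac{a}{5}\right) = \left(\frac{5}{a}\right)$ relates the residue of $a$ mod $5$ to whether $5$ splits, and the upshot is that $P(a)$ can be controlled — in fact the point of the congruence condition is to pin down $p = P(a)$ to a small fixed value (likely $p = 7$, since $\left(\frac{a}{7}\right)$ depends only on $a \bmod 7$, but the $\bmod 5$ condition together with $p > 3$ forces the first admissible prime). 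I would first nail down exactly which prime $P(a)$ equals under the congruence $a \equiv 2 \pmod 5$, reducing to a bounded set of cases.

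Once $p = P(a)$ is a fixed prime $p_0$ independent of $a$, the quantity $D_M(a)$ no longer depends on $a$ at all: it is simply $q_1 \cdots q_M$ where the $q_i$ are the first $M$ primes in the fixed arithmetic-progression-like set $\{q : \left(\frac{q}{p_0}\right) = -1\}$. This set is a union of residue classes mod $p_0$ (namely the non-residues), so it has density $\frac{p_0-1}{2(p_0-1)} = \frac{1}{2}$ among primes by Dirichlet. The $i$-th smallest such prime $q_i$ is therefore of size $\asymp i \log i$ by the prime number theorem for arithmetic progressions, with absolute implied constants. Hence
\[
\log D_M(a) = \sum_{i=1}^{M} \log q_i = \sum_{i=1}^{M} \big(\log i + \log\log i + O(1)\big) = M \log M + O(M),
\]
using $\sum_{i \le M} \log i = M \log M - M + O(\log M)$. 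Exponentiating gives $D_M(a) = M^{M} e^{O(M)}$, which is what the lemma asserts when "$\asymp M^M$" is read in the loose sense of the paper (matching up to a factor $c^M$); here I would simply record $D_M(a) \asymp M^M$ in that sense, or state the sharper $D_M(a) = M^{M(1+o(1))}$ if the paper's later use (Lemma~\ref{cor:Sboundprop}, the $\frac{1}{(2M)^M}$ density loss) only needs a bound of the form $D_M(a) \le (2M)^M$ for large $M$.

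The main obstacle is the first step: verifying that the congruence $a \equiv 2 \pmod 5$ genuinely forces $P(a)$ to be a single fixed prime (and identifying it), rather than merely restricting it to a small finite list. If it only restricts to a finite list $\{p_0^{(1)}, \ldots, p_0^{(k)}\}$, the argument still goes through — for each fixed $p_0^{(j)}$ one gets $D_M(a) \asymp M^M$ with the same asymptotic, and the finitely many cases are absorbed into the implied constant — so the lemma holds regardless; but the clean statement "$D_M(a) \asymp M^M$" with $a$-uniform constant requires checking that the finite list is genuinely finite and $M$-independent, which it is. A secondary minor point is handling small $M$ (where $M^M$ and the constants interact trivially) and confirming that $P(a)$ exists at all, i.e. that $\mathbb{P}_a$ is infinite — immediate since $a$ is not a square, so by Chebotarev a positive density of primes $p$ have $\left(\frac{a}{p}\right)=1$.
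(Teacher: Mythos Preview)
Your approach is exactly the paper's: pin down $P(a)$ via the congruence, observe that the $q_i$ are then the first $M$ primes in a fixed set of Dirichlet density $1/2$, so $q_i \asymp i\log i$ and $\prod_{i\le M} q_i \asymp M^M$ via Stirling. The paper in fact asserts $P(a)=5$ for $a\equiv 2\pmod 5$, and you correctly compute that this is impossible since $\left(\tfrac{2}{5}\right)=-1$, hence $5\notin\mathbb{P}_a$; this is a slip in the paper.

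However, your fallback claim---that the congruence at least confines $P(a)$ to a finite, $a$-independent list---is not correct. The condition $a\equiv 2\pmod 5$ only rules out $p=5$; it says nothing about $\left(\tfrac{a}{7}\right),\left(\tfrac{a}{11}\right),\ldots$, and $P(a)$ is genuinely unbounded over non-squares $a\equiv 2\pmod 5$ (for instance $P(2)=7$ while $P(7)=19$, and by CRT one can force $P(a)$ past any prescribed bound). So with $a$-uniform implied constants the lemma as stated actually fails. The obvious repair---which makes both your argument and the paper's go through verbatim---is to replace the congruence by $a\equiv 1\pmod 5$ (or $a\equiv 4\pmod 5$): then $\left(\tfrac{a}{5}\right)=1$, so $P(a)=5$ for every such $a$, and $D_M(a)$ becomes literally independent of $a$. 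This is harmless for the subsequent density estimate, where the only role of the congruence is to fix $P(a)$ and contribute a factor of $1/5$.
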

\begin{proof}
    For any $a\equiv 2 \pmod 5$, we have $P(a)=5$. Now we choose $q_i$ as the $i$th least prime for which $q_i\equiv 2 \pmod 5$. This shows that,
    $D_M(a)\asymp_{M} \prod\limits_{1\leq i\leq M} i\log i\sim M^M$, as desired.
\end{proof}
Now, for the counting purposes, we denote 
$$N_{M}(X)=\#\{a,b\in \Z : H_2(E_{a,b})\leq X,~\mathrm{rank}(E_{a,b}(\Q))\leq \omega(a^2-4b)-M\}.$$
\begin{lemma}\label{cor:Sboundprop}
For any integer $M$, we have $N_M(X)\geq \frac{4}{5}\frac{1}{(2M)^M}X^3$, as $X\to \infty$.
   \end{lemma}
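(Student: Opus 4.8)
The plan is to produce, for each large $X$, an explicit lower bound on $N_M(X)$ by exhibiting a large set of admissible pairs $(a,b)$. First I would restrict attention to integers $a\equiv 2\pmod 5$ that are not perfect squares; by the previous lemma such $a$ have $P(a)=5$, and the $M$ primes $q_1<\dots<q_M$ with $q_i\equiv 2\pmod 5$ satisfy $\left(\tfrac{q_i}{5}\right)=-1$ and $D_M(a)=q_1\cdots q_M\asymp M^M$. Among $|a|\le X$ the count of such $a$ is $\sim \tfrac{2}{5}\cdot 2X=\tfrac{4}{5}X$ (the congruence class mod $5$ gives density $1/5$ but $a$ and $-a$ give the same class up to sign — more carefully, integers in $[-X,X]$ congruent to $2$ or $-2\equiv 3\pmod 5$, matching the even-torsion setup; in any case the constant works out to $\tfrac45$ after accounting for signs, and the square values are negligible).

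Next, for each such fixed $a$, I would count the $b$ with $|b|\le X^2$, $\gcd(a,b)=1$, and $\nu_{q_i}(a^2-4b)=1$ for all $1\le i\le M$, i.e. $b\in N_M(a)$ in the notation above. The condition $\nu_{q_i}(a^2-4b)=1$ is a congruence condition on $b$ modulo $q_i^2$: one needs $a^2-4b\equiv 0\pmod{q_i}$ but $a^2-4b\not\equiv 0\pmod{q_i^2}$. Since $q_i>3$ is coprime to $4$, for each $i$ this selects exactly $q_i-1$ residue classes modulo $q_i^2$ (one class mod $q_i$ lifting to $q_i$ classes mod $q_i^2$, minus the single forbidden one). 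By CRT over the distinct primes $q_1,\dots,q_M$, the fraction of $b$ in any long interval satisfying all $M$ conditions simultaneously is $\prod_{i=1}^M \tfrac{q_i-1}{q_i^2} \ge \prod_{i=1}^M \tfrac{1}{2q_i} = \tfrac{1}{2^M D_M(a)}$, using $q_i-1\ge q_i/2$. The coprimality condition $\gcd(a,b)=1$ costs only a further positive constant factor $\prod_{p\mid a}(1-1/p)\gg 1/\log\log|a|$, which is harmless up to absorbing it; more cleanly, one can simply drop coprimality for the lower bound by noting non-coprime pairs form a density-zero or at worst constant-loss subset, or incorporate $\phi(a)/a$ and observe the bound $\tfrac{4}{5(2M)^M}$ already has slack since $D_M(a)\asymp M^M$ is much smaller than $(2M)^M$. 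Thus for each admissible $a$ the number of valid $b\in[-X^2,X^2]$ is $\gg \tfrac{X^2}{2^M D_M(a)}$, and summing over the $\gg \tfrac45 X$ admissible $a$ gives $N_M(X)\gg \tfrac{4}{5}\cdot\tfrac{1}{(2M)^M}X^3$, where the extra factor in the denominator beyond $2^M$ absorbs the $M^M$ from $D_M(a)$ and any $\phi(a)/a$ losses. Finally, by Lemma~\ref{lem:mbound} every such $(a,b)$ satisfies $\mathrm{rank}(E_{a,b}(\Q))\le \omega(N(E_{a,b}))-M$, so these pairs indeed all lie in the set counted by $N_M(X)$.

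The main technical point — and the only place requiring care — is bookkeeping the constants: one must check that the product of the local densities $\prod (q_i-1)/q_i^2$ together with the size of $D_M(a)$ and the coprimality factor combine to beat $\tfrac{1}{(2M)^M}$ uniformly in $X$, and that the Lipschitz/lattice-point error terms from replacing ``$|b|\le X^2$ in a union of arithmetic progressions modulo $D_M(a)^2$'' by its volume are $o(X^3)$ for fixed $M$ (they are, since $D_M(a)^2\ll_M 1$ while $X^2\to\infty$). The rest is routine counting, relying on Lemma~\ref{lem:mbound} for the rank bound and on the preceding lemma for $D_M(a)\asymp M^M$ on the congruence class $a\equiv 2\pmod 5$.
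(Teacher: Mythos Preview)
Your proposal is correct and follows essentially the same route as the paper's proof: restrict to $a\equiv 2\pmod 5$ so that $P(a)=5$ and $D_M(a)\asymp M^M$ by the preceding lemma, count the $b$ with $\nu_{q_i}(a^2-4b)=1$ for all $i$ via CRT using $(q_i-1)/q_i^2\ge 1/(2q_i)$, and then invoke Lemma~\ref{lem:mbound}. The only cosmetic difference is that the paper keeps the coprimality factor explicitly as $\phi(a)/a$ and sums it to pick up a clean $1/\zeta(2)$, whereas you treat it more informally; otherwise the two arguments coincide step for step.
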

\begin{proof}
Lemma~\ref{lem:mbound} implies that,
\begin{align*}
    N_M(X)&\geq  \#\Big\{a,b\in \Z: (a,b)=1,~|a|\leq X,~|b|\leq X^2,~\nu_{q_i}(a^2-4b)=1,~\forall 1\leq i\leq M\Big\}\\
    & =\sum_{|a|\leq X} \#\Big\{b\in \Z: (b,a)=1,~|b|\leq X^2,~q_i\mid a^2-4b,~q_i^2\nmid a^2-4b,~\forall 1\leq i\leq M\Big\}\\
&= 2X^2\left(\sum_{|a|\leq X} \frac{\phi(a)}{a}\prod_{i=1}^{M}\left(\frac{1}{q_i}-\frac{1}{q_i^2}\right)\right)+O_{M}(X)\\
&\geq 2X^2\left(\sum_{|a|\leq X} \frac{\phi(a)}{a}\prod_{i=1}^{M}\frac{1}{2q_i}\right)+O_{M}(X)\\
&\geq 2X^2\left(\sum_{\substack{|a|\leq X\\ a\equiv 2\pmod 5}}\frac{\phi(a)}{a}\frac{1}{2^MD_M(a)}\right)+O_{M}(X)\\
&= \frac{1}{(2M)^M}\frac{1}{\zeta(2)}\frac{4}{5}X^3+O_{M}(X^2\log X),
\end{align*}
and this completes the proof, as desired.
\end{proof}
\subsection{Proof of Theorem~\ref{thm:e2mwatkins}}
The proof follows combing Lemma~\ref{lem:mbound} and Lemma~\ref{cor:Sboundprop}.\qed

\subsection{Low $2$-Selmer rank for a specific sub-family of $\mathcal{E}_2$}\label{sec:low2}
In the introduction, we highlighted that Watkins's conjecture is verified for elliptic curves having a rank no greater than $1$. This serves as our inspiration to investigate pairs $(A,B)$ where the corresponding elliptic curve $E_{A,B}$ has a rank of either $0$ or $1$. This section of our study heavily relies on exploring quadratic twists of a specific elliptic curve. Given an elliptic curve $E_{A,B}/\Q$, we delve into the collection of quadratic twists denoted as ${E_{A,B}^{(D)}}$. Lozano and Pasten showed in \cite{EP21} that Watkins's conjecture is true for any $D$ with 
\begin{equation}\label{eqn:omegareq}
\omega(D)\geq 6+5\omega(N(E))-\nu_2(m_E/c^2_E).
\end{equation}
When the elliptic curve $E_{A,B}$ has prime power conductor, Caro in \cite{caro22watkinss}, using Setzer's classification \cite{Setzer75}, established the validity of Watkins's conjecture for any $E_{A,B}^{(D)}$. 

In this section, we specifically focus on the elliptic curve denoted as $E_{0}:=y^2=x^3-1$. Notably, we have the values $N(E_0)=144$, $m_{E_0}=64$, and $E_0[2](\Q)=\Z/2\Z$. Therefore, we are not able to apply Caro's result for $E_0$. However, using equation (\ref{eqn:omegareq}), it turns out that Watkins's conjecture is valid for $E_0^{(D)}$ when $\omega(D)\geq 10+2\nu_2(c_{E_0})$. To address the remaining cases, we rely on the crucial results derived using techniques outlined in \cite{JMR}.

\begin{lemma}\label{lem:rank0} 
Let $D$ be any square-free integer with one of the following properties:
\begin{enumerate}
    \item[(i)] all prime factors are $5 \pmod{12}$.
    \item[(ii)] all but exactly one prime factors are $5 \pmod {12}$, and the leftover prime is $-1 \pmod 4$.
\end{enumerate}
Then for any such $D$, we have $\mathrm{rank}(E_0^{(D)}(\Q))\leq 1$.

\end{lemma}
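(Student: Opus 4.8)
The curve $E_0: y^2 = x^3-1$ has a rational $2$-torsion point, namely $(1,0)$, so it admits a $2$-isogeny $\phi: E_0 \to E_0'$ and we may run the descent machinery of Section~\ref{sec:2descent} on each quadratic twist $E_0^{(D)}$. The plan is to show that under hypothesis (i) or (ii), both the $\phi$-Selmer group and the $\hat\phi$-Selmer group of $E_0^{(D)}$ are as small as the local conditions at $2$, $3$, and $\infty$ force them to be, with no further contribution from the odd primes dividing $D$; this yields $\mathrm{rank}(E_0^{(D)}(\Q)) \le \dim_{\F_2}\Sel^{\phi} + \dim_{\F_2}\Sel^{\hat\phi} - 2 \le 1$. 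Concretely, I would first put $E_0^{(D)}$ (and its isogenous curve) into the model $y^2 = x^3 + ax^2 + bx$ with $a,b$ expressed in terms of $D$; since $E_0$ has $j=0$ and conductor $144 = 2^4\cdot 3^2$, the relevant coefficients are supported only on $2$, $3$, and $D$, so $a^2-4b$ and $b$ have prime factors only among $\{2,3\}\cup\{p : p\mid D\}$.

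The heart of the argument is a prime-by-prime local solubility analysis of the homogeneous spaces $C_{d_1,d_2,F}$ appearing in \eqref{eqn:1sphi} and \eqref{eqn:1sphi'}, exactly in the style of Lemma~\ref{lem:phisriteria} but now exploiting the congruence conditions $p \equiv 5 \pmod{12}$. For a prime $p \mid D$ with $p \equiv 5 \pmod{12}$ one has $\left(\frac{-1}{p}\right) = 1$ and $\left(\frac{3}{p}\right) = \left(\frac{p}{3}\right) = \left(\frac{2}{3}\right) = -1$, hence $\left(\frac{-3}{p}\right) = -1$; this is precisely the quadratic-residue data that governs whether the relevant $C_{d,d'/d}(\Q_p)$ is empty. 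Following \cite[Lem.~3.1]{RPC} (as invoked in the proof of Lemma~\ref{lem:phisriteria}), I would show that for each such $p$ dividing $D$ exactly once, the local condition at $p$ kills the would-be contribution of $p$ to whichever Selmer group it appears in — so $p$ never enlarges the Selmer rank. This uses the techniques of \cite{JMR} to handle the interplay between the CM by $\Q(\sqrt{-3})$ and the splitting behaviour of $p$. In case (ii), the single exceptional prime $q \equiv -1 \pmod 4$ (so $q \equiv 3, 7, \text{ or } 11 \pmod{12}$) may contribute one extra dimension to one of the Selmer groups; I would check that even then the total $\dim\Sel^{\phi} + \dim\Sel^{\hat\phi} - 2$ does not exceed $1$, by pinning down the contributions of $2$, $3$, and $\infty$ to the two Selmer groups precisely (a finite computation for the fixed curve $E_0$), and using \eqref{eqn:eitheror} to absorb one unit.

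The step I expect to be the main obstacle is the bookkeeping of the local conditions at the bad primes $2$ and $3$: since $E_0$ has additive reduction there and $12 \mid N(E_0)$, the twisted curves $E_0^{(D)}$ have reduction type depending on $D \bmod 12$, and one must verify that for $D$ satisfying (i) or (ii) the contributions of $\{2,3,\infty\}$ to $\dim\Sel^\phi + \dim\Sel^{\hat\phi}$ are bounded by $3$ (not merely by some larger constant), which is what makes the final bound $1$ rather than something weaker. This is where one genuinely needs the congruence $p \equiv 5 \pmod{12}$ (rather than, say, $p \equiv 1 \pmod 3$ alone): it simultaneously controls the behaviour at $2$, at $3$, and the Legendre symbol $\left(\frac{-3}{p}\right)$, so that the archimedean and $2$-adic conditions line up with the conditions coming from $\eqref{eqn:eitheror}$. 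Once these local computations are in hand, combining them with \eqref{eqn:2rankbound-} and \eqref{eqn:eitheror} gives the claimed bound $\mathrm{rank}(E_0^{(D)}(\Q)) \le 1$ immediately.
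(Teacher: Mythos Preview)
Your approach is essentially the same as the paper's: both use $2$-descent via the isogeny $\phi$ and bound $\dim_{\F_2}\Sel^{\phi}(E_0^{(D)})+\dim_{\F_2}\Sel^{\hat\phi}(E_0'^{(D)})$ through local solubility conditions, exploiting the congruence data at primes $p\equiv 5\pmod{12}$. The only difference is that the paper does not carry out the local analysis from scratch; it simply invokes the already-completed computations in \cite{JMR} (specifically \cite[Prop.~1]{JMR} for case~(i), and case~(4) of \cite[Lem.~4]{JMR} together with case~(5) of \cite[Lem.~5]{JMR} for case~(ii)), which give the Selmer dimensions directly for quadratic twists of exactly this curve. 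Your sketch of the Legendre-symbol bookkeeping and the local conditions at $2,3,\infty$ is correct in spirit, but unnecessary once you realize \cite{JMR} has done precisely this work for $E_0$.
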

\begin{proof}
If $D$ satisfies the condition at $(i)$, then according to \cite[Prop.~1]{JMR}, we have 
$\mathrm{dim}_{\mathbb{F}_2}(\mathrm{Sel}^2(E_0^{(D)}))=\mathrm{dim}_{\mathbb{F}_2}(\mathrm{Sel}^2(E'^{(D)}))=1$, and this shows that $\mathrm{rank}(E^{(D)}(\Q))= 0$. Now, suppose $D$ satisfies the condition at $(ii)$. Applying case (4) of \cite[Lem.~4]{JMR} and case (5) of \cite[Lem.~5]{JMR}, we obtain
$\mathrm{dim}_{\mathbb{F}_2}(\mathrm{Sel}^2(E^{(D)}))+\mathrm{dim}_{\mathbb{F}_2}(\mathrm{Sel}^2(E'^{(D)}))\leq 3,$
and this shows that $\mathrm{rank}(E^{(D)}(\Q))\leq 1$.
\end{proof}

\begin{remark}\rm
We can establish that $\mathrm{rank}(E^{(p)}(\Q))\leq 1$ for any prime $p$ under the conditions that $p\equiv 5 \text{ or } 11 \pmod {12}$. Specifically, this result is stated as \cite[Thm.~2]{JMR} for $p\equiv 5 \pmod {12}$, while for $p\equiv 11 \pmod {12}$, it can be deduced by applying the proof of \cite[Thm.~2]{JMR}.
\end{remark}
As a result of the aforementioned analysis, we can derive the following extension of \cite{EP21} regarding Watkins's conjecture.
\begin{cor}\label{cor:another}
For any squarre-free integer $D$, we have $E_0^{(D)}\in \mathcal{E}_2$. Then, Watkins's conjecture holds true for $E_0^{(D)}$, provided that $D$ satisfies one of the following:
\begin{enumerate}
    \item[(i)] $\omega(D)\geq 10+2\nu_2(c_{E_0})$
\item[(ii)] $\omega(D)< 10+2\nu_2(c_{E_0})$, and $D$ satisfies one of the conditions specified in Lemma~\ref{lem:rank0}.
\end{enumerate}
\end{cor}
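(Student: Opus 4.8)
\textbf{Proof proposal for Corollary~\ref{cor:another}.}

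The plan is to reduce the statement to the earlier results and simply bookkeep the two cases. First I would verify the membership claim $E_0^{(D)} \in \mathcal{E}_2$: the curve $E_0 : y^2 = x^3 - 1$ has the rational $2$-torsion point $(1,0)$, and this point (together with its image under any quadratic twist) persists in every quadratic twist $E_0^{(D)}$, since twisting by $D$ replaces $y^2 = f(x)$ by $D y^2 = f(x)$, equivalently $(Dy)^2 = D^3 f(x/D) \cdot$(a rescaling), and the $2$-torsion points, being exactly the points with $y = 0$, are preserved up to the coordinate change. Hence $E_0^{(D)}(\Q)[2] \neq 0$, so $E_0^{(D)} \in \mathcal{E}_2$ for every square-free $D$. (One should note $E_0^{(D)}(\Q)[2] = \Z/2\Z$ for $D \notin \{\pm 1, \pm 3\}$ or so, since $x^3 - 1$ has only one rational root; this is consistent with needing $E(\Q)[2] = \Z/2\Z$ to invoke Corollary~\ref{cor:E2}, though for the present corollary only nontriviality of the $2$-torsion is strictly used to cite \cite{EP21}.)

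For case~(i), the claim is immediate from the inequality \eqref{eqn:omegareq} of Lozano-Vera and Pasten: specialising their criterion to $E = E_0$ with $N(E_0) = 144 = 2^4 \cdot 3^2$, so $\omega(N(E_0)) = 2$, and $m_{E_0} = 64$, so $\nu_2(m_{E_0}) = 6$, the bound $\omega(D) \geq 6 + 5\omega(N(E_0)) - \nu_2(m_{E_0}/c_{E_0}^2) = 6 + 10 - (6 - 2\nu_2(c_{E_0})) = 10 + 2\nu_2(c_{E_0})$ is exactly the hypothesis, so Watkins's conjecture holds for $E_0^{(D)}$. (The value $\nu_2(m_{E_0}/c_{E_0}^2) = \nu_2(m_{E_0}) - 2\nu_2(c_{E_0})$ uses $c_{E_0} \mid m_{E_0}$, which is standard.)

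For case~(ii), we are in the range $\omega(D) < 10 + 2\nu_2(c_{E_0})$ where \eqref{eqn:omegareq} gives nothing, and $D$ satisfies hypothesis (i) or (ii) of Lemma~\ref{lem:rank0}. By that lemma, $\mathrm{rank}(E_0^{(D)}(\Q)) \leq 1$. Since Watkins's conjecture is known unconditionally for every elliptic curve of rank $\leq 1$ — by the odd-modular-degree results of Calegari-Emerton, Yazdani, Kazalicki-Kohen when $\nu_2(m_E) = 0$, and trivially $\mathrm{rank} \leq 1 \leq \nu_2(m_E)$ once $\nu_2(m_E) \geq 1$ — the rank bound of Lemma~\ref{lem:rank0} forces $\mathrm{rank}(E_0^{(D)}(\Q)) \leq \nu_2(m_{E_0^{(D)}})$, which is Watkins's conjecture. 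This disposes of both cases and completes the proof. I do not anticipate any real obstacle here: the work is entirely in Lemma~\ref{lem:rank0} and in the numerical evaluation of \eqref{eqn:omegareq}; the only point demanding care is the correct computation of the right-hand side of \eqref{eqn:omegareq} for $E_0$, i.e.\ tracking the Manin constant contribution $\nu_2(c_{E_0})$ so that the two cases partition all $D$ cleanly.
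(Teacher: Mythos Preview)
Your proposal is correct and follows essentially the same route as the paper: case~(i) is deduced directly from the Lozano--Pasten criterion \eqref{eqn:omegareq} specialized to $E_0$, and case~(ii) combines Lemma~\ref{lem:rank0} with the fact that Watkins's conjecture holds for curves of rank at most~$1$. You supply more detail than the paper does (the explicit numerics $\omega(N(E_0))=2$, $\nu_2(m_{E_0})=6$ underlying the threshold $10+2\nu_2(c_{E_0})$, and the justification via odd-modular-degree results that rank $\le 1$ suffices), but the argument is the same.
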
 
\begin{proof}
It is evident that $E_0^{(D)}[2](\Q)=\Z/2\Z$ for any $D$. Now the proof for part $(i)$ follows from \cite{EP21}, while the proof of part $(ii)$ uses Lemma~\ref{lem:rank0} and the fact that Watkins's conjecture is true for elliptic curves of rank at most $1$.
\end{proof}
\subsection*{Proof of Theorem~\ref{thm:x3+k}}
\subsubsection*{Proof of part (iii)} 
The size of $\mathcal{E}_2(X)$ is computed in (\ref{eqn:E2}) in Section~\ref{sec:E2setup}. For the $M$-Watkins elliptic curves, we shall use Lemma~\ref{lem:rank0}. Note that $H(E_0^{(D)})\leq X$, if and only if, $|D|\leq X^{1/2}$. Due to Lemma~\ref{lem:rank0}, it is enough to estimate 
$$\Big\{D\in \Z: |D|\leq X^{1/2},~\omega(D)>M,~p~\mathrm{prime}\mid D \implies p\equiv 5 \pmod{12}\Big\}.$$
Let us note that
\begin{equation}\label{eqn:lessm}
\#\Big\{D\in \Z: |D|\leq X^{1/2},~\omega(D)\leq M\Big\}=O_{M}\left(\frac{X^{1/2}}{\log X}(\log \log X)^M\right).
\end{equation}
Moreover, since $\{p~\mathrm{prime}:p\equiv 5 \pmod{12}\}$ has density $1/4$, we also have
\begin{equation}\label{eqn:14}
\#\Big\{D\in \Z: |D|\leq X^{1/2},~~p~\mathrm{prime}\mid D \implies p\equiv 5 \pmod{12}\Big\}=\frac{X^{1/2}}{(\log X)^{1/4}}.
\end{equation}
The desired estimate now follows combining (\ref{eqn:lessm}) and (\ref{eqn:14}).
\qed

To prove part $(ii)$ of Theorem~\ref{thm:3-torsion}, we first need to study the Watkins for the elliptic curves of \textit{Type-I}. Consequently, we will deduce part $(ii)$ along with proving Theorem~\ref{thm:x3+k}.
\subsection{Watkins over $\mathcal{E}_3$}\label{sec:we3}
Let us begin by revisiting the discussion and notations outlined in Section~\ref{sec:3descent}. For any integer $a$, consider the elliptic curve $E_a:=y^2=x^3+a$. In this section, we shall study Watkins's conjecture for the family of elliptic curves $\{E_a\}$. 
\subsection*{Proof of Theorem~\ref{thm:x3+k}}
\begin{proof}[Proof of part (i)]
It is enough to show that $S=\#\Big\{a\in \mathbb{Z}: |a|\leq X,~\mathrm{rank}(E_a)> \omega(a)-M\Big\}$ is $O_M\left( \frac{X}{\log \log X}\right)$. To do this, let us first note that \cite[Thm.~3]{3-iso} implies 
\begin{equation*}\label{eqn:what}
\sum_{\substack{a\in S}} (\omega(a)-M)\leq \sum_{\substack{a\in S }} \mathrm{rank}(E_a)=O(X).
\end{equation*}
Since $S \leq 2X$, we also have that $\sum\limits_{\substack{a\in S}} \omega(a)=O(X)$. On the other hand, it follows from \cite[Thm.~3.2.3]{Murty-Alina} that $\sum\limits_{\substack{a\in S}} \Big|\omega(a)-\log \log X\Big|^2=O\left(X\log \log X\right)$. In particular, applying triangle inequality, we have 
$\#~S~(\log \log X)^2=O\left(X\log \log X\right)$, and this completes the proof.
\end{proof}
With this, we are now prepared to investigate Watkins's conjecture over $\E_3$. Combining (\ref{eqn:3rankbound}), (\ref{eqn:sphi}), and (\ref{eqn:sphi'}), we have
\begin{equation}\label{eqn:rkbound}
\rank\left(E_a(\Q)\right) \leq  r_3(K_a)+\dim_{\F_3}\left( U_{K_a}/U_{K_a}^3\right)+ r_3(K_{-27a})+\dim_{\F_3}\left( U_{K_{-27a}}/U_{K_{-27a}}^3\right)+\#~S_a+\#~S_{-27a}.
\end{equation}
%It follows from Lemma~\ref{lem:r3bdd} that, and Scholz’s reflection principle~\cite{Scholz1932} that
%$$ \#\Big\{a\in \Z:|a|\leq X,~\mu^2(a)=1,~r_3(K_a)+r_3(K_{-3a})> \frac{\omega(a)}{2}+1\Big\}=O\left( \frac{X}{\log \log X}\right).$$
%For any square-free integer $a$, we have $S_a=S_{-27a}=\{2,3\}$. In particular, it follows from (\ref{eqn:rkbound}) that
%$$ \#\Big\{a\in \Z:|a|\leq X,~\mu^2(a)=1,~\rank\left(E_a(\Q)\right)> \frac{\omega(a)}{2}+5\Big\}= O\left( \frac{X}{\log \log X}\right).$$ 
%The result is now immediate from Dummigan, Krishnamoorthy's lower bound in Section~\ref{sec:DK}, combining with (\ref{eqn:lessm}) for the parameter $X$, instead of $X^{1/2}$, since we can afford to make $\frac{\omega(a)}{2}+5>\omega(a)-M$ except for only $O_{M}\left(\frac{X}{\log X}(\log \log X)^{2M+10}\right)$ many $|a|\leq X$. \qed

Now, let us recall the set of primes $S_a$, defined by $2,3\in S_a$, and $p\neq 2,3 \in S_a \Longleftrightarrow \nu_p(a)=2,4~ \text{and} ~ \left(\frac{-3}{p}\right)=1$. Our objective is to establish an upper bound on $\#~S_{a}+\#~S_{-27a}$ over a suitable set of integers $a$.  By definition, we know that $\#~S_{a}=\#~S_{-27a}$. Hence, our goal is to determine the proportion of all $a$ for which $\#~S_{a}$ is significantly smaller than $\frac{\omega(a)}{2}$. 
\subsubsection{Bounding $r_3$ part}
For any integer $a$, let us recall that $K_a=\Q(\sqrt{-3a})$, and $r_3(K_a)$ denotes order of the $3$-torsion part of the class group of $K_a$. Note that $r_3(K_a)=O(1)$, when $a$ is a square, up to a constant factor. Based on this observation, we can derive the following result.
\begin{lemma}\label{lem:sqcase}
For any integers $m,k_1,k_2,\cdots k_m$, consider the set of integers
$N_{\vec{k},m}=\bigcup \limits_{i=1}^{m}\{k_in^2:n\in \Z\}$, where $\vec{k}=(k_1,k_2,\cdots,k_m)$. For any $a\in N_{\vec{k},m}$, we have 
\begin{equation}
    r_3(K_a)+\dim_{\F_3}\left( U_{K_a}/U_{K_a}^3\right)+ r_3(K_{-27a})+\dim_{\F_3}\left( U_{K_{-27a}}/U_{K_{-27a}}^3\right)=O_{\vec{k}}(1).
\end{equation}
\end{lemma}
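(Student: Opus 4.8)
The plan is to reduce everything to the case of a \emph{single} coefficient $k$, i.e.\ to $a = kn^2$, and then to a pointwise statement about the fields $\Q(\sqrt{-3kn^2})$ and $\Q(\sqrt{81kn^2})$. The key observation is that the relevant quantities — the $3$-rank $r_3$ of the class group and the $3$-rank of the unit group modulo cubes — depend only on the isomorphism class of the quadratic field, which in turn depends only on the square-free part of the radicand. First I would note that since $N_{\vec k,m}$ is the finite union $\bigcup_{i=1}^m\{k_in^2:n\in\Z\}$, and $O_{\vec k}(1)$ allows a constant depending on the tuple, it suffices to bound the displayed sum uniformly over $a\in\{kn^2:n\in\Z\}$ for each fixed $k$; the maximum over $i$ of these $m$ bounds is still $O_{\vec k}(1)$.

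Next, for $a = kn^2$ we have $-3a = -3kn^2$, so $K_a = \Q(\sqrt{-3a}) = \Q(\sqrt{-3k})$, a field that does not depend on $n$ at all. Likewise $-27a = -27kn^2 = -3k(3n)^2$, so $K_{-27a} = \Q(\sqrt{-27a}) = \Q(\sqrt{-3k}) = K_a$. Hence for \emph{every} $a\in\{kn^2\}$ the four terms in the sum are literally
\[
2\,r_3\!\left(\Q(\sqrt{-3k})\right) + 2\,\dim_{\F_3}\!\left(U_{\Q(\sqrt{-3k})}/U_{\Q(\sqrt{-3k})}^3\right),
\]
a quantity that depends only on $k$. (One may if desired first reduce $-3k$ to its square-free part $s(-3k)$, which changes nothing.) The unit-rank term is in fact bounded absolutely: for an imaginary quadratic field the unit group is finite, so $\dim_{\F_3}(U_K/U_K^3)$ is $0$ unless $K=\Q(\sqrt{-3})$ in which case it is $1$; for a real quadratic field the unit group has rank $1$, so the dimension is at most $2$. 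And $r_3$ of any fixed number field is of course a fixed finite number. Assembling, the displayed sum is at most $2\,r_3(\Q(\sqrt{-3k})) + 4 = O_k(1)$, which after taking the maximum over the components of $\vec k$ gives the claimed $O_{\vec k}(1)$.

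There is essentially no obstacle here: the lemma is really just the elementary remark that multiplying $a$ by a perfect square does not change the number field $\Q(\sqrt{-3a})$, combined with the triviality (up to torsion) of units in imaginary quadratic fields and the Dirichlet unit theorem in the real case. The only point requiring a moment's care is the identity $K_{-27a} = K_a$ when $a = kn^2$ — one must observe $-27kn^2 = -3k\cdot(3n)^2$ is $-3k$ times a square — and the bookkeeping that the implied constant, being allowed to depend on $\vec k$, absorbs both the finite value $r_3(\Q(\sqrt{-3k}))$ and the passage from a single $k$ to the finite union over $k_1,\dots,k_m$.
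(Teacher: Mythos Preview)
Your approach is exactly the one the paper intends (it states the lemma immediately after the one-line remark that $r_3(K_a)=O(1)$ when $a$ is a square up to a constant factor, and gives no further proof), and your reduction to a single $k$ together with the observation that the fields depend only on the square-free part of the radicand is the whole content.

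There is, however, a small slip in your identification of $K_{-27a}$. By the paper's definition $K_b=\Q(\sqrt{-3b})$, so
\[
K_{-27a}=\Q\!\left(\sqrt{-3\cdot(-27a)}\right)=\Q(\sqrt{81a})=\Q(\sqrt{a}),
\]
not $\Q(\sqrt{-27a})$. For $a=kn^2$ this gives $K_{-27a}=\Q(\sqrt{k})$, which in general is \emph{not} the same field as $K_a=\Q(\sqrt{-3k})$. Fortunately this does not damage the argument at all: $\Q(\sqrt{k})$ is still a fixed quadratic (or degenerate) field depending only on $k$, so $r_3(K_{-27a})$ and $\dim_{\F_3}(U_{K_{-27a}}/U_{K_{-27a}}^3)$ are still constants depending only on $k$, and your bound and the final $O_{\vec{k}}(1)$ go through unchanged. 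Just correct the displayed ``$2r_3(\Q(\sqrt{-3k}))+\cdots$'' to the sum over the two (possibly distinct) fields $\Q(\sqrt{-3k})$ and $\Q(\sqrt{k})$.
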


Let us now note that, the discussions in Section~\ref{sec:E2setup} and Section~\ref{sec:E3setup} implies that
\begin{align}\label{eqn:Eatype}
    E_a \in \begin{cases}
       \mathcal{E}_2 & \text{if $a$ is a cube,}\\
         \mathcal{E}_3, & \text{if $a$ is a square.}\\
    \end{cases}
\end{align}

\subsubsection*{Proof of part \textit{(ii)}} First of all for any $a\in N_{\vec{k},m}$, we have
\begin{equation}\label{eqn:Eabound}
\mathrm{rank}(E_a(\Q))\leq 2~\# S_{a}+O_{\mathcal{S}}(1).
\end{equation}
For any integer $M>0$, it is now enough to show that $\#S_a\leq \frac{\omega(a)}{2}-M$, for at least $\gg_{\vec{k},m}\left(\frac{X}{\log X}\right)^{1/2}$ many elements in $N_{\vec{k},m}(X)$. Since $N_{\vec{k},m}(X)$ is contained in $\bigcup\limits_{i=1}^{m}\{k_in^2\}$, it is enough to only consider $a$ of the form $kn^2$, for a given integer $k$.

We have $\#\Big\{a\in \Z: |a|\leq y^{1/2}:~p\mid a\implies \left(\frac{-3}{p}\right)=-1\Big\}\gg \left(\frac{y}{\log y}\right)^{1/2}$, as the limit $y\to \infty$. On the other hand for any integer $M$, we also have $\#\Big\{a\in \Z: a\leq y,\omega(a)\leq M\Big\}=O_{M}\left(\frac{y^{1/2}}{\log y}(\log \log y)^M\right)$.

Since we have, $\left(\frac{y}{\log y}\right)^{1/2}\gg \frac{y^{1/2}}{\log y}(\log \log y)^M$, the proof is complete taking $y=\min\left\{\frac{X^{1/2}}{k_i}\right\}_{1\leq i\leq m}$.
\qed
\subsection*{Proof of Corollary~\ref{thm:3-torsion}} 
First of all, the size of $\mathcal{E}_3(X)$ follows from (\ref{eqn:E3}) in Section~\ref{sec:E3setup}. The lower bound on the number of $M$-Watkins elliptic curves in $\mathcal{E}_3$, taking $m=1$ and $k_1=1$ in part $(i)$ of Theorem~\ref{thm:x3+k}.
\qed
\begin{remark}\rm
We expect that $\# S_a\leq \frac{\omega(a)}{2}-M$ will hold for $50\%$ of the times over $N_{\vec{k},m}$. This is because for any set of primes $S$ of density $1/2$ if we denote $\omega_S(n)$ be the number of prime factors of $n$ from $S$, then even though $\omega_S(n)-\frac{\omega(n)}{2}$ typically is small, it is negative if and only if $\omega_{S'}(n)-\frac{\omega(n)}{2}$ is positive. Here $S'$ is the complement of $S$, which also has a density of $1/2$. Since $S$ and $S'$ both are of density $1/2$, we see no reason to believe that signs in each of the sets $\{\omega_{S}(n)-\frac{\omega(n)}{2}\}_{n\in \Z}$ and $\{\omega_{S'}(n)-\frac{\omega(n)}{2}\}_{n\in \Z}$ are not equidistributed.
\end{remark}
\section{$M$-Watkins over $\E_3,\E_5$ and $\E_7$}\label{sec:higher}
The key tool that we will be using is the main counting lemma due to Harron, Snowden~\cite{HS17}, designed to count the twists of a given $1$-parameter family of elliptic curves.
\begin{lemma}\label{lem:veryimp}
Let us consider $f_{\ell}$ and $g_{\ell}$, for $\ell \in \{3,5,7\}$. Denote $\mathcal{E}_{\ell}(X)$ be the set of pairs $(A,B) \in \Z^2$ satisfying the following conditions:
\begin{itemize}
\item[(i)] $4A^3+27B^2 \ne 0$.
\item[(ii)] $\gcd(A^3,B^2)$ is not divisible by any 12th power.
\item[(iii)] $|A|<X^2$ and $|B|<X^{3}$.
\item[(iv)] There exist $u,r \in \Q$ such that $A=u^4 f(r)$ and $B=u^6 g(r)$.
\end{itemize}
Denote $\mathcal{E}'_{\ell}(X)$ to be the set of all $(u,r)\in \Q^2$ appearing in the last point above. Then, we have
\begin{equation}\label{eqn:contain}
\Big\{r:(u,r)\in \mathcal{E}'_{\ell}(X)\Big\}\subseteq \Big\{a/b: |a|=O_{f_{\ell},~g_{\ell}}\left(|X|^{m_{\ell}}\right),~|b|=O_{f_{\ell},~g_{\ell}}\left( X^{n_{\ell}}\right)\Big\},
\end{equation}
where the exponents $m_{\ell},n_{\ell}$ are explicitly given by
\begin{align*}
(m_{\ell},~n_{\ell})=\begin{cases}
    (3/2,~1/2), & \text{if $f=f_5,~\mathrm{and}~g=g_5$},\\
    (1/4,~1/4), & \text{if $f=f_5,~\mathrm{and}~g=g_5$},\\
           (1/2,~1/2), & \text{if $f=f_7,\mathrm{and}~g=g_7$.}\\
\end{cases}
\end{align*}
\end{lemma}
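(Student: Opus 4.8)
\textbf{Proof proposal for Lemma~\ref{lem:veryimp}.}

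The plan is to reduce the problem to bounding the numerator and denominator of the rational parameter $r=a/b$ in terms of the height $X$, using the fact that the coefficients $A=u^4f(r)$ and $B=u^6g(r)$ are integers of controlled size. First I would write $r=a/b$ in lowest terms and clear denominators: since $\deg f_\ell$ and $\deg g_\ell$ are known from \cite[Table 2]{HS17} and \cite[Prop.~3.2]{HS17} (namely $(\deg f_5,\deg g_5)=(4,6)$ and $(\deg f_7,\deg g_7)=(8,12)$), the quantities $b^{\deg f_\ell}f_\ell(a/b)$ and $b^{\deg g_\ell}g_\ell(a/b)$ are integers, and the minimality/primitivity condition (ii) — that $\gcd(A^3,B^2)$ is not divisible by a $12$th power — forces $u$ to be essentially determined (up to bounded factors coming from the resultant of $f_\ell$ and $g_\ell$ and the primes $2,3$) by the requirement that $A,B$ be a \emph{minimal} Weierstrass pair. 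This is the standard argument in \cite[Sec.~4]{HS17}: the pair $(u^4f(r),u^6g(r))$ is, up to a bounded power, the minimal model attached to the point $r\in\mathbb{A}^1(\Q)$, so $u$ is controlled and the size of $A,B$ translates directly into a size bound on $a,b$.

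Next I would carry out the size bookkeeping. From $|A|<X^2$ and $|B|<X^3$ together with $A=u^4f_\ell(r)$, $B=u^6g_\ell(r)$, one extracts $|u|^4\,|f_\ell(a/b)|\ll X^2$ and $|u|^6\,|g_\ell(a/b)|\ll X^3$. Writing $f_\ell(a/b)=b^{-\deg f_\ell}\tilde f_\ell(a,b)$ with $\tilde f_\ell$ the homogenization (an integer), and similarly for $g_\ell$, and using that $\tilde f_\ell, \tilde g_\ell$ have no common factor of large degree (their resultant is a nonzero integer, absorbed into the implied constant), the two inequalities can be combined to eliminate $u$ and isolate a bound of the shape $\max(|a|,|b|)^{\text{something}}\ll X^{\text{something}}$. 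Balancing the exponents — exactly the kind of elimination in the proof of \cite[Thm.~1.1]{HS17} — yields $|a|=O(X^{m_\ell})$ and $|b|=O(X^{n_\ell})$ with the claimed exponents $(m_\ell,n_\ell)$, which match the entries $\mathcal{E}_5(X)\sim X$ and $\mathcal{E}_7(X)\sim X^{1/2}$ recorded in \eqref{eqn:E5,E7}. (I note the statement as typeset lists $f=f_5,g=g_5$ twice; the second line should presumably read $f=f_7,g=g_7$ or refer to the $\ell=3$ case, and I would silently correct this when writing the final version against \cite[Table 1]{HS17}.)

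The main obstacle, and the step requiring the most care, is controlling the auxiliary scaling parameter $u$: a priori $u$ is only a rational number, and one must rule out the possibility that a large $u$ is compensated by $f_\ell(r),g_\ell(r)$ being very small (i.e. $r$ close to a common near-root), which would break the clean translation from height bounds to bounds on $a,b$. This is precisely where minimality condition (ii) is essential: it pins down $u$ up to a bounded factor by a $p$-adic valuation argument at each prime, exactly as in \cite[Lem.~4.3 and the surrounding discussion]{HS17}. Once $u$ is controlled, the remainder is the elementary elimination described above. I would therefore structure the proof as: (1) reduce to the minimal pair and bound $u$ via (ii); (2) homogenize $f_\ell,g_\ell$ and use the resultant to decouple $a$ from $b$; (3) solve the resulting system of inequalities for $|a|,|b|$ in terms of $X$; (4) read off the exponents case by case from the known degrees.
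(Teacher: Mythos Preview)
Your proposal is essentially correct and follows the same underlying mechanism as the paper, but you are reconstructing from scratch an argument that the paper simply imports wholesale. The paper's own proof is two sentences: for $\ell=5,7$ it observes that the parameter $m$ in \cite[Table~2]{HS17} equals $1$ and then invokes \cite[Lem.~2.5]{HS17} directly; for $\ell=3$ it first uses \cite[Lem.~2.4]{HS17} to write $r=a/b^2$ with $\gcd(a,b^2)$ square-free, and then again applies \cite[Lem.~2.5]{HS17}. All of the work you outline---controlling $u$ via the minimality condition~(ii), homogenizing, decoupling $a$ and $b$ through the resultant, and solving the resulting inequalities---is exactly what is packaged inside those lemmas of Harron--Snowden, so your plan would succeed but is more laborious than necessary.

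One point you should tighten: the $\ell=3$ case is not quite symmetric with $\ell=5,7$. As the paper notes, for $\ell=3$ the rational parameter is naturally of the form $r=a/b^2$ (rather than $a/b$ in lowest terms), which is why the exponents $(3/2,1/2)$ are asymmetric. Your generic ``write $r=a/b$ in lowest terms and homogenize'' step would need this adjustment. You are right that the displayed case list is garbled: the three lines should correspond to $\ell=3,5,7$ respectively, and the exponent pairs match $\#\mathcal{E}_\ell(X)\asymp X^{m_\ell+n_\ell}$ being $X^2$, $X$, $X^{1/2}$.
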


\begin{proof}
    It follows from \cite[Table 2]{HS17} that the relevant $m$ is $1$ for both $f_5$ and $f_7$. In particular, \cite[Lem.~2.5]{HS17} implies the result for $\ell=5$ and $7$. Now for $\ell=3$, \cite[Table 2]{HS17} and \cite[Lem.~2.4]{HS17} indicates that $r$ can be written as $a/b^2$ such that $\mathrm{gcd}(a,b^2)$ is square-free. Consequently, the result follows from \cite[Lem.~2.5]{HS17}.
\end{proof}
Consequently, we shall deduce a result on the average rank for the fibers of an elliptic surface over $\Q(t)$, which will play a key role in this section. As mentioned in the introduction, it is recently conjectured by Park, Poonen, Voight, and Wood in \cite{rank21} that there are only finitely many elliptic curves of greater than rank $21$. For this section, we only need to know that only a small proportion of the elliptic curves have a large rank. In this regard, we prove the following.

\begin{lemma}\label{lem:rankbound}
Consider $f_{\ell}$ and $g_{\ell}$, for $\ell \in \{3,5,7\}$, and assume that the conjectures $B, C$ and $D$ in \cite{Silverman} hold. Then for any integer $M\geq 1$, the proportion of elliptic curves $E \in \mathcal{E}_{f,g}$ with a rank greater equals to $M$ is at most $O_{f,g}\left(\frac{1}{M}\right)$.
\end{lemma}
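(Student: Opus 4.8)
The plan is to pass from the average-rank statement of Silverman \cite{Silverman} for a one-parameter family $\mathcal{E}_{f,g}$ over $\mathbb{Q}(t)$ to the stated tail bound by a direct Markov-type argument, after first checking that Silverman's hypotheses are exactly the conjectures $B$, $C$, $D$ quoted in the statement. Concretely, Silverman's result (conditional on $B$, $C$, $D$) gives that
\begin{equation*}
\sum_{E\in \mathcal{E}_{f,g}(X)} \mathrm{rank}(E(\mathbb{Q})) = O_{f,g}\left(\#\mathcal{E}_{f,g}(X)\right),
\end{equation*}
i.e. the average rank over the fibers with parameter of bounded height is bounded by an absolute (family-dependent) constant $c_{f,g}$. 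Here one must be slightly careful to order the fibers in the way Silverman does; by Lemma~\ref{lem:veryimp} the admissible parameters $r=a/b$ (or $a/b^2$ when $\ell=3$) range over a box of the stated shape, so counting fibers by $H(E)\le X$ is comparable to counting the parameters $r$ in that box, and Silverman's averaging theorem applies verbatim to that ordering.

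\textbf{Key steps.} First I would recall precisely the averaging statement from \cite{Silverman}: under $B$, $C$, $D$, for a non-isotrivial elliptic surface $\mathcal{E}\to\mathbb{P}^1$ over $\mathbb{Q}$ one has $\frac{1}{\#\{|t|\le T\}}\sum_{|t|\le T}\mathrm{rank}(\mathcal{E}_t(\mathbb{Q})) \le c_{f,g} + o(1)$ as $T\to\infty$; this requires noting that each $f_\ell, g_\ell$ ($\ell\in\{3,5,7\}$) defines a non-split (non-isotrivial) surface, which follows from $\deg$ data in \cite[Table 2]{HS17} since the $j$-invariant is non-constant along these families. Second, I would convert the sum over parameters into the sum over $\mathcal{E}_{f,g}(X)$ using Lemma~\ref{lem:veryimp} and the height comparisons of Section~\ref{sec:univ}, so that
\begin{equation*}
\sum_{E\in\mathcal{E}_{f,g}(X)}\mathrm{rank}(E(\mathbb{Q})) \le (c_{f,g}+o(1))\,\#\mathcal{E}_{f,g}(X).
\end{equation*}
Third, for fixed $M\ge 1$ apply the trivial inequality $\mathrm{rank}(E(\mathbb{Q})) \ge M\cdot \mathbf{1}[\mathrm{rank}(E(\mathbb{Q}))\ge M]$ termwise and divide, giving
\begin{equation*}
\#\{E\in\mathcal{E}_{f,g}(X): \mathrm{rank}(E(\mathbb{Q}))\ge M\} \le \frac{c_{f,g}+o(1)}{M}\,\#\mathcal{E}_{f,g}(X),
\end{equation*}
which is exactly the claimed $O_{f,g}(1/M)$ proportion after letting $X\to\infty$ (or absorbing the $o(1)$ into the implied constant for each finite $X$).

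\textbf{Main obstacle.} The genuinely delicate point is not the Markov step, which is routine, but justifying the \emph{uniformity} needed to apply Silverman's theorem to \emph{each} of the three surfaces and to the specific height-ordering forced on us by $\mathcal{E}_{f,g}(X)$: Silverman's theorem is stated for an ordering by the height of the base parameter $t$, whereas our families are ordered by the height $H(E)$ of the curve, and the dictionary between the two (via $A=u^4f(r)$, $B=u^6 g(r)$) involves the extra twist parameter $u$. I expect the cleanest route is to invoke Lemma~\ref{lem:veryimp} to confine $r$ to a box whose sidelengths are fixed powers of $X$, note that for each such $r$ the number of admissible $u$ is controlled, and then observe that Silverman's averaging argument (which is really a count of local root numbers / a bound via the analytic rank under the stated conjectures) is insensitive to reweighting fibers by a bounded multiplicity — so the bounded-average conclusion survives. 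A secondary point to record is that conjectures $B$, $C$, $D$ of \cite{Silverman} are precisely the inputs (the parity/root-number conjecture, a bound on the average of the analytic rank, and the relevant Riemann-hypothesis-type input) that Silverman uses, so no additional hypotheses creep in; I would state this explicitly at the start of the proof and then proceed as above.
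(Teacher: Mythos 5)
Your proposal is correct and follows essentially the same route as the paper: bound $\sum_{E\in\mathcal{E}_\ell(X)}\mathrm{rank}(E)$ by a sum over the parameters $r=a/b$ confined to the box of Lemma~\ref{lem:veryimp} (using that $E_{u^4f(r),u^6g(r)}$ is $\Q$-isomorphic to $E_{f(r),g(r)}$, so the rank depends only on $r$), invoke the conditional average-rank bound, and finish with Markov's inequality together with $\#\mathcal{E}_\ell(X)\asymp X^{m_\ell+n_\ell}$. The "main obstacle" you flag — that Silverman's theorem is stated for integer parameters $t$ while here $r$ ranges over rationals in a box — is exactly what the paper resolves by proving the separate Proposition~\ref{prop:avgrank} ($R_{f,g}(X,Y)=O_{f,g}(XY)$ for non-split surfaces), which is then cited in the proof of the lemma, so your outline matches the paper's argument once that proposition is taken as the input.
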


Additionally, we now need to examine the conductors of the elliptic curves in $\mathcal{E}_{\ell}(X)$. Proposition~\ref{prop:maintool} is central to this analysis. For the preparations of the two tasks as described above, we require the following facts for polynomials $f_3$, $f_5$, $f_7$ (and their counterparts $g_3$, $g_5$, $g_7$):

\begin{lemma}\label{lem:irreducible}
Let us consider the polynomials $\Delta_{\ell} = 4f_{\ell}^3 + 27g_{\ell}^2$, for $\ell \in \{3, 5, 7\}$. It follows that any of these polynomials, namely $\Delta_3$, $\Delta_5$, and $\Delta_7$, have at least one non-trivial irreducible factor in $\mathbb{Z}[t]$ that divides with multiplicity one.
\end{lemma}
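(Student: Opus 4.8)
\textbf{Proof proposal for Lemma~\ref{lem:irreducible}.}

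The plan is to treat each of the three polynomials $\Delta_3$, $\Delta_5$, $\Delta_7$ separately, exploiting the fact that $\Delta_\ell = 4f_\ell^3 + 27g_\ell^2$ is, up to constants, the discriminant of the universal elliptic curve $E_{f_\ell(t), g_\ell(t)}$ over $\Q(t)$ coming from $X_1(\ell)$. Geometrically, the claim that $\Delta_\ell$ has an irreducible factor appearing with multiplicity one amounts to saying that the elliptic surface $\mathcal{E}_\ell \to \mathbb{A}^1$ has at least one fiber of multiplicative (rather than additive) reduction, i.e. at least one place where the minimal discriminant vanishes to order exactly $1$. Since these surfaces are far from isotrivial (the $j$-invariant is nonconstant for $\ell = 5, 7$, and for $\ell = 3$ the relevant family $E_{f_3, g_3}$ is also non-isotrivial), such fibers must exist, but I would prefer to make this completely explicit rather than invoke general surface theory.

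The concrete approach: from \cite[Table~2]{HS17} (or \cite[Prop.~3.2]{HS17}) write down the explicit polynomials $f_5, g_5$ (degrees $4, 6$) and $f_7, g_7$ (degrees $8, 12$), and $f_3, g_3$ (degrees $1, 2$). For each $\ell$, factor $\Delta_\ell = 4f_\ell^3 + 27 g_\ell^2 \in \Z[t]$ into irreducibles over $\Q$. One then needs to verify that the factorization is not a perfect power of a single irreducible of lower degree, equivalently that $\gcd(\Delta_\ell, \Delta_\ell')$ has strictly smaller degree than $\deg(\Delta_\ell) - (\text{number of distinct roots})$; more simply, compute $\Delta_\ell / \gcd(\Delta_\ell, \Delta_\ell')$, the radical, and check that at least one root of $\Delta_\ell$ is simple, i.e. is not a root of $\Delta_\ell'$. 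For $\ell = 3$ this is a two-line computation since $\deg \Delta_3 \le 4$. For $\ell = 5, 7$ the degrees are $12$ and $24$, so I would either cite the known Kodaira-type / fiber-configuration data for $X_1(5)$ and $X_1(7)$ as elliptic surfaces (the number and types of singular fibers are classical and recorded, e.g. all singular fibers of the universal curve over $X_1(5)$ and $X_1(7)$ are of type $I_n$ at the cusps, hence multiplicative), or perform the resultant computation $\mathrm{Res}(\Delta_\ell, \Delta_\ell')$ and exhibit one simple root directly.

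The cleanest argument, and the one I would actually write, uses the cusps: the zeros of $\Delta_\ell$ correspond to the points of $X_1(\ell)$ where the universal curve degenerates, and at a cusp the Tate-curve description shows the reduction is multiplicative of type $I_n$ with $n$ equal to the vanishing order of the minimal discriminant; for $\ell$ prime the cusp widths and the resulting $n$ are explicit, and in particular at least one cusp has $n = 1$ (after minimalizing), giving a simple root of the irreducible factor of $\Delta_\ell$ supported at that cusp. The main obstacle is purely bookkeeping: one must be careful that the raw polynomial $\Delta_\ell = 4f_\ell^3 + 27 g_\ell^2$ may not be the \emph{minimal} discriminant — there can be common factors of $f_\ell$ and $g_\ell$ inflating multiplicities by powers of $12$ as in condition (ii) of Lemma~\ref{lem:veryimp} — so I would first pass to the minimal model fiberwise, or equivalently divide out the spurious $12$th-power factors, and only then read off a multiplicity-one irreducible factor. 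Once the minimal discriminant is in hand, exhibiting one simple irreducible factor is immediate from the explicit factorization, completing the proof for all three values of $\ell$.
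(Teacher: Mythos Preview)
Your proposal is correct but considerably more elaborate than what the paper does. The paper's proof is a one-line appeal to explicit formulas: instead of the short Weierstrass polynomials $f_\ell,g_\ell$ from \cite[Table~2]{HS17}, it parametrizes $\mathcal{E}_5$ and $\mathcal{E}_7$ via the Tate normal form $E(b,c):Y^2+(1-c)XY-bY=X^3-bX^2$ with $(b,c)=(t,t)$ and $(t^3-t^2,t^2-t)$ respectively, and simply writes down (via SageMath) $\Delta_5(t)=(t^2-11t-1)t^5$ and $\Delta_7(t)=(t^3-8t^2+5t+1)(t-1)^7t^7$, from which the multiplicity-one irreducible factor is visible by inspection; for $\ell=3$ the degree-$4$ polynomial $\Delta_3$ is already irreducible.

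Your geometric argument via cusps and Kodaira types is the conceptual explanation for why this works (all singular fibers of the universal curve over $X_1(\ell)$ are of type $I_n$), and your worries about large degrees and minimality are legitimate if one insists on the $f_\ell,g_\ell$ of degrees $(4,6)$ and $(8,12)$ --- but the paper sidesteps all of that by choosing a better Weierstrass model over $\Q(t)$, where the discriminant has degree $7$ and $17$ rather than $12$ and $24$, and the factorization is transparent. Your approach buys structural understanding; the paper's buys a proof that fits in three displayed equations.
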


To prove both Lemma~\ref{lem:irreducible} and Lemma~\ref{lem:rankbound}, we explicitly write down the equations of $\Delta_{\ell}$ for $\ell=3,5$ and $7$, which we briefly do in the next section.
\subsection{Structures of $f_3,f_5$ and $f_7$}
\subsubsection*{Equation for $\Delta_3$} 
The authors in \cite[Lem.~3.4]{HS17} demonstrate that the elements in $\mathcal{E}_3$ can be represented by the equation $y^2+axy+by=x^3$, where $P=(0,0)$ is a point on any such curve with order $3$. Notably, when $a=0$, this corresponds to a family that covers only $0\%$ of the elements in $\mathcal{E}_3$. This aspect is crucial because the curve $y^2+by=x^3$ is isomorphic to $y^2=x^3+16b^2$.

As a consequence, \cite[Prop.~3.5]{HS17} establishes that nearly all elements in $\mathcal{E}_3$ can be expressed through integral polynomials as follows: 
$$f_3(t)=3^4\left(2t-\frac{1}{3}\right),~~~g_3(t)=3^6\left(t^2 +\frac{2}
{3}t +\frac{2}
{27}\right).$$
In this case, we have 
\begin{equation}\label{eqn:delta3}
    \Delta_3(t)=4f_3(t)^3-27g_3(t)^2=-14348907 t^4-2125764 t^3- 17006112 t^2-157464,
\end{equation}
which is irreducible in $\Z[t]$.

Let us recall that the Tate normal form of an elliptic curve with $P = (0, 0)$ is given by $E = E(b, c): Y^2 + (1-c)XY-bY = X^3-bX^2.$ The discriminant of $E(b, c)$ is $\Delta(b, c) = b^3(16b^2-8bc^2-20bc+b+c(c-1)^3)$. With this, one has the following:
\begin{equation}
5P=\infty \iff b=c,~7P=\infty \iff b = t^3 -t^2,~c = t^2-t.
\end{equation}

\subsubsection*{Equation for $\Delta_5$} 

The corresponding family of elliptic curves is 
$\left\{Y^2+(1-t)XY-tY=X^3-tX^2\right\}_{t \in \Q}$. A simple computation of SageMath, immediately gives us the discriminant 
\begin{equation}\label{eqn:delta5}
\Delta_5(t)=(t^2 - 11t - 1)t^5.
\end{equation}

\subsubsection*{Equation for $\Delta_7$} The corresponding family is
$\left\{Y^2 + (1-t^2+t)XY-(t^3-t^2)Y = X^3-(t^3-t^2)X^2\right\}_{t\in\Q}$. Again, a SageMath computation gives us
\begin{equation}\label{eqn:delta7}
    \Delta_7(t)=(t^3 - 8t^2 + 5t + 1)(t-1)^7t^7.
\end{equation}

\subsection*{Proof of Lemma~\ref{lem:irreducible}}
The proof is based on the explicit equations derived in the earlier sections for $\Delta_3$, $\Delta_5$, and $\Delta_7$, respectively. More specifically, the proof follows from the equations (\ref{eqn:delta3}), (\ref{eqn:delta5}) and (\ref{eqn:delta7}).
\qed

\subsection{Aerage rank of elliptic curves and the conclusions}\label{sec:avgrank} 
Let $\mathcal{E}/\mathbb{Q}(t)$ be an elliptic surface, i.e., a $1$-parameter family of elliptic curves, given by $$\mathcal{E}_{f,g}:y^2=x^3+f(t)x+g(t),~f(t),g(t)\in \Z[t].$$ For each rational $r\in \Q$, let us consider the curve $E_{f(r),g(r)}:= y^2=x^3+f(r)x+g(r)$. The main goal of this section is to study the ranks of the elliptic curves $E_{f(r),g(r)}$, as we vary $r$ over all the rationals. We follow the approach in \cite{Silverman}, and obtain the following estimate on average rank.

\begin{prop}\label{prop:avgrank}
Let $f,g\in \Z[t]$ be two arbitrary polynomials, and assume that the conjectures $B, C$, and $D$ in \cite{Silverman} hold. For any two parameters $X,Y\to \infty$, we have the following estimate
    $$ R_{f,g}(X,Y)=\sum\limits_{\substack{r=a/b\in \Q\\|a|\leq X,~|b|\leq Y}}\mathrm{rank}(E_{f(r),g(r)})=O_{f,g}(XY),$$
    provided that $\mathcal{E}_{f,g}$ is a non-split elliptic surface.
\end{prop}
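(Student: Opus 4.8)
The plan is to follow Silverman's method from \cite{Silverman} for bounding the average rank in a $1$-parameter family, adapting it so that the two summation parameters $X$ (the bound on numerators) and $Y$ (the bound on denominators) are handled separately rather than through a single height. The starting point is the standard fact that for each $r = a/b$ in lowest terms, $\mathrm{rank}(E_{f(r),g(r)}) \le \mathrm{rank}_{\mathbb{Q}(t)}(\mathcal{E}_{f,g}) + (\text{specialization defect at } r)$, where the specialization defect is controlled, via the conjectures $B,C,D$, by the number of bad primes of the fiber together with a term coming from $\Sha$ or the analytic rank. Concretely, I would invoke conjectures $B$ (a bound on the analytic rank in terms of $\log N$), $C$ (bounding $\log N$ for the fiber at $r$ in terms of the height of $r$ and the bad-reduction data of the surface), and $D$ as used by Silverman, to get a pointwise bound of the shape $\mathrm{rank}(E_{f(r),g(r)}) = O_{f,g}(\log H(r) / \log\log H(r))$ or at worst $O_{f,g}(\log H(r))$, where $H(r) = \max\{|a|,|b|\}$.

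The heart of the argument is then the summation. Summing a bound of the form $O_{f,g}(\log H(a/b))$ over all $a/b$ with $|a|\le X$, $|b|\le Y$ gives $O_{f,g}(XY \log(XY))$, which is off by a logarithmic factor. To remove it one argues, exactly as in Silverman, that the genuinely bad fibers --- those where the specialization map drops rank or where the fiber has many bad primes --- are sparse: the number of $r = a/b$ with $|a|\le X,|b|\le Y$ for which the rank exceeds a fixed constant $c$ is $O_{f,g}(XY/(\log XY)^{\kappa})$ for some $\kappa > 0$, using that such $r$ force $\Delta_{f,g}(a,b)$ (the homogenized discriminant) to be divisible by many primes or to be a near-perfect power, both of which are rare among lattice points in a box. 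Since Lemma~\ref{lem:irreducible} guarantees $\Delta_{f,g}$ has a simple irreducible factor, the discriminant is not identically a square or higher power, so the squarefree-part / many-prime-factor count applies; combined with the non-split hypothesis (which ensures $\mathrm{rank}_{\mathbb{Q}(t)}(\mathcal{E}_{f,g})$ and the Shioda--Tate contribution are uniformly bounded and that the geometric monodromy is nontrivial, so the generic fiber has bounded rank) this yields $R_{f,g}(X,Y) = O_{f,g}(XY) \cdot O(1) + O_{f,g}(XY) = O_{f,g}(XY)$.

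The main obstacle I anticipate is not any single estimate but the bookkeeping of making Silverman's conjectural inputs $B,C,D$ work with the two-parameter box $|a|\le X$, $|b|\le Y$ instead of a single height $H(r)$: one must check that the conductor bound in conjecture $C$ and the analytic-rank bound in conjecture $B$ depend only on $\max\{|a|,|b|\}$ (up to $O_{f,g}(1)$ factors coming from the content and from the finitely many bad places of the surface), and that the "rare bad fibers" counting survives when the box is very lopsided (say $Y$ much smaller than $X$). A secondary technical point is handling the fibers lying over the finitely many places of bad reduction of $\mathcal{E}_{f,g}$ itself and over $t=\infty$, and ensuring the coprimality/minimality normalization of $(f(r),g(r))$ does not disturb the count; these contribute only $O_{f,g}(XY)$ and are absorbed. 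I would therefore structure the proof as: (1) recall Silverman's pointwise rank bound under $B,C,D$; (2) re-derive it in the two-parameter box form; (3) prove the sparsity of high-rank fibers via a lattice-point-with-many-prime-factors estimate keyed to Lemma~\ref{lem:irreducible}; (4) sum, using non-splitness to bound the generic rank, and conclude.
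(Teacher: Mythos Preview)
Your approach diverges from the paper's in a crucial way, and as stated it does not close. The paper does \emph{not} argue via a pointwise rank bound followed by a sparsity-of-bad-fibers correction. Instead it sums the Mestre--Silverman explicit-formula inequality
\[
\lambda\,\mathrm{rank}(E_{f(r),g(r)}) \;\le\; \log N(E_{f(r),g(r)}) - 2\sum_{p^m \le e^\lambda} F_\lambda(m\log p)\,\frac{a_{p^m}(E_{f(r),g(r)})\log p}{p^m} + O(1)
\]
directly over the box $|a|\le X$, $|b|\le Y$. The conductor term contributes $O_{f,g}(XY\log X)$ after invoking the conductor conjecture, and that logarithm is cancelled not by sparsity but by the \emph{averaged prime sum}: for fixed $b$ the sum over $a$ of $a_p(E_{f_b(a),g_b(a)})$ is governed by $A_p(\mathcal{E}_b)=p^{-1}\sum_{\bar r\in\mathbb{F}_p}a_p((\mathcal{E}_b)_{\bar r})$, and Lemma~\ref{lem:uniform} (Michel's bound) shows this is bounded uniformly in $p$ and $b$. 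Choosing $\lambda=\tfrac{1}{3}\log X$ then yields $R_{f,g}(X,Y)=O_{f,g}(XY)$ in one stroke. The two-parameter box is handled by the substitution $f_b(t)=b^{4d}f(t/b)$, $g_b(t)=b^{6d}g(t/b)$, which reduces to integer specializations of a surface with the \emph{same} $\deg\Delta$, $\deg c_4$, $\deg c_6$ and the same generic rank; this uniformity in $b$ is precisely the bookkeeping you flagged, and it is what makes the lopsidedness of the box harmless.

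The gap in your plan is the sparsity step. Under the conjectural pointwise bound you quote, $\mathrm{rank}(E_{f(r),g(r)})=O(\log N(E_{f(r),g(r)}))$, but since $\log N(E_{f(r),g(r)})\asymp_{f,g}\log H(r)$ for almost every $r$, this bound is of size $\log H(r)$ \emph{generically}, not only on a thin set. High rank in this sense forces $\Delta_{f,g}(a,b)$ to be large, which it almost always is; it does not force it to have many prime factors or to be a near-perfect power, so there is no rare set of ``bad fibers'' whose removal saves the logarithm. Lemma~\ref{lem:irreducible} is not used at all in the paper's proof of this proposition (it enters only later, in Lemma~\ref{lem:bound}). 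The mechanism that actually kills the extra $\log X$ is the cancellation in $\sum_r a_p(E_r)$ supplied by Michel's uniform estimate on fiberwise Frobenius averages, combined with \cite[Cor.~3.2]{Silverman} relating the $p$-average of $A_p$ to $\mathrm{rank}\,\mathcal{E}(\Q(t))$; your outline never invokes this, and without it the argument stalls at $O_{f,g}(XY\log X)$.
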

Before proving Proposition~\ref{prop:avgrank}, let us first consider the average of Frobenius trace over the fibers of any elliptic surface $\mathcal{E}_{f,g}/\Q(t),~f,g\in \Z[t]$, and any prime $p$, as follows
$$A_p(\mathcal{E})=\frac{1}{p}\sum_{\bar{r}\in \mathbb{F}_p}a_p(\mathcal{E}_{\bar{r}}),$$
where we denote $a_p(E)$ to be the the quantity $p+1-\#E(\mathbb{F}_p)$, for any elliptic curve $E/\Q$.
\begin{lemma}\label{lem:uniform}
For any polynomials $f,g\in \Z[t]$, and for any prime $p$, we have the estimate
    $$A_p(\mathcal{E})\leq 3\deg(\Delta(\mathcal{E}_{f,g}))+\deg(c_4(\mathcal{E}_{f,g}))-2.$$
    In particular, $A_p(\mathcal{E})$ is bounded independent of $p$ and $\mathcal{E}$, as the quantity on the right above is uniformly bounded.
\end{lemma}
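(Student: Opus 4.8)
The plan is to realize $A_p(\mathcal{E})$ as an average of character sums over $\mathbb{F}_p$ and apply the Weil bound for curves/exponential sums in a uniform fashion. First I would recall that for a prime $p$ of good reduction for the fiber $\mathcal{E}_{\bar r}$ one has
$$
a_p(\mathcal{E}_{\bar r}) = -\sum_{x \in \mathbb{F}_p}\left(\frac{x^3 + f(\bar r)x + g(\bar r)}{p}\right),
$$
where $\left(\frac{\cdot}{p}\right)$ is the Legendre symbol, and the finitely many fibers of bad reduction (those $\bar r$ with $\Delta(\mathcal{E}_{f,g})(\bar r) \equiv 0 \pmod p$) contribute $O(1)$ each, hence can be absorbed after we see the main term is itself $O(1)$. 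Summing over $\bar r \in \mathbb{F}_p$ and dividing by $p$ gives
$$
A_p(\mathcal{E}) = -\frac{1}{p}\sum_{\bar r \in \mathbb{F}_p}\ \sum_{x \in \mathbb{F}_p}\left(\frac{x^3 + f(\bar r)x + g(\bar r)}{p}\right) + O(\deg \Delta),
$$
so the task reduces to bounding the two-variable character sum $T_p := \sum_{\bar r, x \in \mathbb{F}_p}\left(\frac{x^3 + f(\bar r)x + g(\bar r)}{p}\right)$ by $O(p)$ with an implied constant that is polynomial in the degrees of $f$ and $g$ (equivalently in $\deg \Delta$ and $\deg c_4$).

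The key step is the following: fix $x \in \mathbb{F}_p$ and consider the inner sum $S_x := \sum_{\bar r \in \mathbb{F}_p}\left(\frac{P_x(\bar r)}{p}\right)$, where $P_x(t) := g(t) + x f(t) + x^3 \in \mathbb{F}_p[t]$ is a polynomial in $t$ of degree $\le \max(\deg f, \deg g)$. By Weil's bound on character sums (see e.g. the standard consequence of the Riemann hypothesis for curves), whenever $P_x$ is not a constant times a perfect square in $\mathbb{F}_p[t]$ one has $|S_x| \le (\deg P_x - 1)\sqrt{p}$. The polynomial $P_x$ fails to be of this squarefree-enough type only for $x$ lying in the zero set of a certain nonzero resultant/discriminant polynomial in $x$ whose degree is bounded in terms of $\deg f, \deg g$; for those $O(\deg f + \deg g)$ exceptional values of $x$ we bound $|S_x| \le p$ trivially. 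Therefore
$$
|T_p| \le \sum_{x\ \mathrm{good}}(\deg P_x - 1)\sqrt{p} + \sum_{x\ \mathrm{bad}} p \le p\cdot(\deg P_x - 1)\sqrt{p}/\sqrt p \cdot \tfrac{1}{\sqrt p}\cdot\sqrt p + O\big((\deg f + \deg g)\,p\big),
$$
which after cleaning up is $|T_p| \ll (\deg f + \deg g)\,p$. Dividing by $p$ and folding in the $O(\deg\Delta)$ from bad fibers yields $A_p(\mathcal{E}) = O(\deg f + \deg g)$, and then I would track the constants carefully: since $\deg c_4 = \deg f$ (up to the usual normalization $c_4 = -48 f$) and $\deg \Delta = \max(3\deg f, 2\deg g)$, one can package the bound in the stated shape $A_p(\mathcal{E}) \le 3\deg(\Delta(\mathcal{E}_{f,g})) + \deg(c_4(\mathcal{E}_{f,g})) - 2$. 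The final sentence of the lemma is then immediate: the right-hand side depends only on the degrees of $f$ and $g$, not on $p$, and since throughout the paper we work with the fixed finite list $f_3, g_3, f_5, g_5, f_7, g_7$ (and, in Proposition~\ref{prop:avgrank}, an arbitrary but fixed pair $f, g$), it is uniformly bounded.

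The main obstacle I anticipate is not the Weil estimate itself but the bookkeeping needed to (a) identify precisely which $x \in \mathbb{F}_p$ are "bad" — i.e., to exhibit a single nonzero polynomial in $x$, with $\mathbb{Z}$-coefficients and degree controlled by $\deg f, \deg g$, that vanishes whenever $P_x(t)$ has a repeated root mod $p$ or degenerates in degree — and to ensure this polynomial is genuinely nonzero for all but finitely many $p$ (which is where one uses that $\mathcal{E}_{f,g}$ is not isotrivial/split, or more simply that $\Delta(\mathcal{E}_{f,g})$ is not a constant — already guaranteed by Lemma~\ref{lem:irreducible} in the cases of interest), and (b) pin down the numerical constants $3$, $1$, and $-2$ in the stated inequality, which requires being slightly careful about the exact degree of $P_x$ as a function of $x$ (the $x^3$ term can raise the degree when $\deg g < 3$) and about the contribution of the bad fibers. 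Neither of these is deep, but both require attention to avoid an off-by-a-constant error; for the qualitative conclusion that drives the rest of the section only the uniform boundedness matters.
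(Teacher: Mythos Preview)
Your approach has a genuine gap at the main estimate. After swapping the order of summation you bound each inner sum $S_x=\sum_{\bar r}\bigl(\tfrac{P_x(\bar r)}{p}\bigr)$ by the one-variable Weil bound $|S_x|\le(\deg P_x-1)\sqrt p$, and then sum over the (roughly $p$) good values of $x$. That only gives $|T_p|\ll p\cdot\sqrt p=p^{3/2}$, hence $A_p(\mathcal E)\ll\sqrt p$, not $O(1)$. Your displayed chain ``$p\cdot(\deg P_x-1)\sqrt p/\sqrt p\cdot\tfrac{1}{\sqrt p}\cdot\sqrt p$'' is precisely where the argument breaks: there is no legitimate extra factor of $1/\sqrt p$ available from a slice-by-slice application of Weil, and you give no mechanism for cancellation in $\sum_x S_x$. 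The same $\sqrt p$ loss appears if you keep the original order of summation: Hasse gives $|a_p(\mathcal E_{\bar r})|\le 2\sqrt p$, and summing over $p$ fibers again leaves $A_p\ll\sqrt p$.

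The paper does not attempt this from scratch; it simply quotes \cite[Prop.~1.2]{Mich}, which supplies $\sum_{\bar r:\Delta(\bar r)\ne0}a_p(\mathcal E_{\bar r})\le p\,(2\deg\Delta+\deg c_4-2)$ directly, and then adds at most $\deg\Delta$ for the bad fibers. Michel's proposition is not a repackaging of one-variable Weil: the sum $\sum_{\bar r}a_p(\mathcal E_{\bar r})$ is governed by the number of $\mathbb F_p$-points on the total space of the fibration, a \emph{surface} over $\mathbb F_p$, and obtaining square-root cancellation across the fibers requires Deligne's work (Weil~II / monodromy). That two-dimensional input is exactly the missing $\sqrt p$ in your argument, and it is not accessible by the elementary character-sum manipulations you outline. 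If you want to avoid citing Michel you would have to invoke the point count for the surface $z^2=x^3+f(t)x+g(t)$ via Deligne; Lang--Weil alone again only yields $A_p\ll\sqrt p$.
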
 
\begin{proof}
It follows from \cite[Prop.~1.2]{Mich} that $$\sum\limits_{\substack{\bar{r}\in \mathbb{F}_p\\\Delta(\bar{r})\neq 0}}a_p(\mathcal{E}_{\bar{r}})\leq p(2\deg(\Delta(\mathcal{E}_{f,g}))+\deg(c_4(\mathcal{E}_{f,g}))-2).$$
The proof follows immediately, as the number of $\bar{r}\in \F_p$ with $\Delta(\bar{r})=0$ is at most $\deg(\Delta)$.
\end{proof}
\subsection*{Proof of Proposition~\ref{prop:avgrank}}
Let us denote $d=\max\left\{\mathrm{deg}(f),\mathrm{deg}(g)\right\}$. For any rational $r=a/b$, note that $E_{f(r),g(r)}$ is isomorphic to $E_{b^{4d}f(r),b^{6d}g(r)}$ over $\Q$. Additionally, both $b^{4d}f(r)$ and $b^{6d}g(r)$ are integers. For each $b$, observe that both of $f_b(t):=b^{4d}f(t/b)$, and $g_b(t):=b^{6d}g(t/b)$, are polynomials with integer coefficients. 

For each $b\in \Z$, let us consider the elliptic surface
$$\mathcal{E}_{f_b,g_b}:y^2=x^3+f_b(t)x+g_b(t),~f_b(t),~g_b(t)\in \Z[t].$$
It is clear that $\mathrm{rank}(\mathcal{E}_{f,g}(\Q(t))=\mathrm{rank}(\mathcal{E}_{f_b,g_b}(\Q(t))$, for any $b\in \Z$. Additionally, $\Delta(E_{f,g})(\alpha)=0$ if and only if, $\Delta_{E_{f_b,g_b}}(b\alpha)=0$. In particular, $\mathrm{deg}(\Delta_{\mathcal{E}_{f,g}})=\mathrm{deg}(\Delta_{\mathcal{E}_{f_b,g_b}})$. Similarly, we also have $\mathrm{deg}(c_4(\mathcal{E}_{f,g}))=\mathrm{deg}(c_4(\mathcal{E}_{f_b,g_b}))$ and $\mathrm{deg}(c_6(\mathcal{E}_{f,g}))=\mathrm{deg}(c_6(\mathcal{E}_{f_b,g_b}))$. Summarizing the discussion, we have the following two crucial ingredients
\begin{equation}\label{eqn:randn}
\mathrm{rank}(\mathcal{E}_{f,g}(\Q(t))=\mathrm{rank}(\mathcal{E}_{f_b,g_b}(\Q(t)),~\mathrm{deg}(N(\mathcal{E}_{f,g}))=\mathrm{deg}(N(\mathcal{E}_{f_b,g_b})),~\forall b\in \Z.
\end{equation}
Furthermore, we can write
\begin{align}\label{eqn:sumequal}
  R_{f,g}(X,Y)=\sum\limits_{\substack{r=a/b\in \Q\\|a|\leq X,~|b|\leq Y}}\mathrm{rank}(E_{f(r),g(r)})&=\sum_{\substack{b\in \Z\\ |b|\leq Y}}~\sum_{\substack{a\in \Z\\ |a|\leq X}}\mathrm{rank}(E_{f_b(a),~g_b(a)}).
\end{align}
For any $\lambda>0$, let us now consider the triangle function $F_{\lambda}(x)=\max\{0,1-|x/\lambda|\}$. Then, we have
\begin{equation*}
 \lambda\mathrm{rank}(E_{f(r),g(r)})\leq \log~N(E_{f(r),g(r)})-2\sum\limits_{p^m\leq \lambda}F_{\lambda}(m\log p)\frac{a_{p^m}(E_{f(r),g(r)})\log p}{p^m}+O(1),~\forall r\in\Q,
\end{equation*}
where the constant term $O(1)$ is absolute, i.e., independent to $r\in \Q$. In particular, we have
\begin{align}\label{eqn:avglast}
    \lambda R_{f,g}(X,Y)&\leq \sum_{\substack{b\in \Z\\ |b|\leq Y}}~\sum_{\substack{a\in \Z\\ |a|\leq X}} \log~N(E_{f_b(a),g_b(a)})-2\sum_{m\geq 1} S_m(X,Y)+O(XY),\nonumber \\
    &\leq \sum_{\substack{b\in \Z\\ |b|\leq Y}}~\big(\sum_{\substack{a\in \Z\\ |a|\leq X}} \deg(N(\mathcal{E}_{f_b,g_b})) \log |a|+O_{f,g}(1)\big)-2\sum_{m\geq 1} S_m(X,Y)+O(XY)\nonumber\\
    &\leq \mathrm{deg}(N(\mathcal{E}_{f,~g}))XY \log X-2\sum_{m\geq 1} S_m(X,Y)+O(XY).
\end{align}
where the implicit constant is absolute, and 
$S_{m}(X,Y)=\sum\limits_{\substack{r=a/b\in \Q\\|a|\leq X,~|b|\leq Y}}\sum\limits_{p^m\leq e^{\lambda}}F_{\lambda}(m\log p)\frac{a_{p^m}(E_{f(r),g(r)})\log p}{p^m}.$

Similarly as in \cite[p.~232]{Silverman}, we have $\sum\limits_{m>2}S_m(X,Y)=O(XY)$, where the implicit constant is absolute. Now, we need to estimate $S_1(X,Y)$ and $S_2(X,Y)$. Note that, 
\begin{align}\label{eqn:s2}
S_2(X,Y)=\sum\limits_{\substack{r=a/b\in \Q\\|a|\leq X,~|b|\leq Y}}\sum\limits_{p\leq e^{\lambda/2}}F_{\lambda}(2\log p)\frac{a_{p^2}(E_{f(r),g(r)})\log p}{p^2}=-2XY\lambda+o(XY),
\end{align} 
where the last estimate follows from \cite[Lem.~4.2]{Silverman}. Now for the estimation of $S_1$, note that
\begin{align}\label{eqn:s3}
S_1(X,Y)&=\sum\limits_{|b|\leq Y}\sum\limits_{|a|\leq X}\sum\limits_{p\leq e^{\lambda}}F_{\lambda}(\log p)\frac{a_{p}(E_{f_b(a),g_b(a)})\log p}{p}\nonumber\\
&=\sum\limits_{|b|\leq Y} \sum\limits_{p\leq e^{\lambda}}F_{\lambda}(\log p)\frac{2X}{p}A_p(\mathcal{E}_b)+\sum\limits_{|b|\leq Y}\sum\limits_{p \leq e^{\lambda}}\sum_{\bar{r}\in I_p\subset \mathbb{F}_p}F_{\lambda}(\log p)\frac{\log p}{p}a_{p}(E_{f_b(\bar{r}),g_b(\bar{r})}),
\end{align}
for some subset $I_p$ of $\mathbb{F}_p$. Lemma~\ref{lem:uniform} and \cite[Cor.~3.2]{Silverman} implies that the first sum above is 
\begin{equation}\label{eqn:fsum}
-\left(X\lambda +o(X\lambda)\right)\sum\limits_{|b|\leq Y}\mathrm{rank}(\mathcal{E}_b(\Q(t)))=-XY \mathrm{rank}(\mathcal{E}(\Q(t)))(2\lambda+o(1)).
\end{equation} 
Now, the inner sum in the second sum in (\ref{eqn:s3}) is crudely bounded by $\sum\limits_{p\leq e^{\lambda}} p^{1/2}\log p=e^{3\lambda/2}$. In particular, now combining (\ref{eqn:avglast}), (\ref{eqn:s2}), and (\ref{eqn:s3}), we have
$$R_{f,g}(X,Y)\leq \frac{\mathrm{deg}(N(\mathcal{E}_{f,~g}))}{\lambda}XY\log X-XY \mathrm{rank}(\mathcal{E}(\Q(t)))(2+o(1/\lambda))+Ye^{3\lambda/2}.$$
Now the proof follows taking $\lambda=\frac{\log X}{3}$.
\qed

As an immediate consequence, we deduce that the most elliptic curves in the $1$-parameter family as in Proposition~\ref{prop:avgrank} have a small rank. More specifically, we have the following.
\subsection*{Proof of Lemma~\ref{lem:rankbound}}
Take $E\in \mathcal{E}_{\ell}(X)$, and take the corresponding $r\in \mathcal{E}'_{\ell}(X)$. We have 
$\mathrm{rank}(E)=\mathrm{rank}(E_{f_{\ell}(r),g_{\ell}(r)})$. With this observation, it follows from Lemma~\ref{lem:veryimp} that
\begin{align}
\sum_{E\in \mathcal{E}_{\ell}(X)}\mathrm{rank}(E)&\leq \sum_{\substack{r=a/b~\in~\Q\\ |a|\leq CX^{m_{\ell}},~|b|\leq CX^{n_{\ell}}}} \mathrm{rank}(E_{f_{\ell}(r),~g_{\ell}(r)})\\
&=R_{f_{\ell},g_{\ell}}(CX^{m_{\ell}},X^{n_{\ell}})=O(X^{m_{\ell}+n_{\ell}})
\end{align}
where $C$ is the maximum of the implicit constants in (\ref{eqn:contain}), and the implicit constant is absolute. The result now follows immediately, as $\# \mathcal{E}_{\ell}(X) \asymp X^{m_{\ell}+n_{\ell}}$.
\qed

\subsection{Normal number of prime factors of square-free parts of polynomial values}
Let $f(t)\in \Z[t]$ be any polynomial of degree $d$. At any rational $r\in \Q$, let us denote $s(f(r))$ to be the square-free part of $f(r)$. Poonen \cite{Poonen}, explored the prevalence of square-free values in the polynomials. Over the integers $n$, Poonen investigated scenarios where $f(n)$ is not divisible by any fixed square $m^2 \neq 1$, utilizing the ABC conjecture. However, in our context, our focus is on understanding how frequently the square-free parts of $f(r)$ possess a substantial number of prime factors. First of all, for any rational $r=\frac{a}{b}$, let us denote the height $h(r)=\max\{|a|,|b|\}$. Then we arrange all the rationals with respect to height $h$ and prove the following. 
\begin{prop}\label{prop:maintool}
Let $M\geq 1$ be any integer, and $f(t)\in \Z[t]$ be any irreducible non-constant polynomial. Then, for almost all the rationals $r\in \Q$, we have $\omega(s(f(r)))\geq M$. More precisely, we have the following estimate:
$$\#\Big\{r\in \Q: h(r)\leq X,~\omega(s(f(r)))<M\Big\}=O_M\left( \frac{X^2}{\log \log X}\right).$$
\end{prop}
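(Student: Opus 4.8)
The plan is to prove the dual statement: the set of $r \in \Q$ with $h(r) \le X$ and $\omega(s(f(r))) < M$ is small. The natural strategy is a second-moment / Turán–Kubilius type argument for the additive function $r \mapsto \omega(s(f(r)))$, analogous to the proof of \cite[Thm.~3.2.3]{Murty-Alina} but carried out over the rationals of bounded height rather than over integers in an interval. First I would reduce to a sum over integer pairs: writing $r = a/b$ in lowest terms with $|a|, |b| \le X$, one has $f(a/b) = F(a,b)/b^d$ where $F(a,b) = b^d f(a/b)$ is the homogenization of $f$, a binary form of degree $d$ that is irreducible over $\Q$ (since $f$ is irreducible and non-constant), and $s(f(a/b)) = s(F(a,b) \cdot b)$ up to bounded ambiguity, so it suffices to understand $\omega(s(F(a,b)))$ — indeed $\omega(s(f(r))) \ge \omega(s(F(a,b))) - O(1)$ after accounting for the contribution of primes dividing $b$, which can only help.

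The core estimate I would establish is that $\omega(s(F(a,b)))$ has normal order $\log\log X$ as $(a,b)$ ranges over the box $[-X,X]^2$. For this I would compute the first two moments of $\omega(F(a,b))$ (and then pass to $s(F(a,b))$ using $\omega(s(n)) = \omega(n) - (\text{number of primes dividing } n \text{ to an even power} \ge 2)$, the correction term having small mean by a standard divisor argument). The first moment is
\[
\sum_{|a|,|b|\le X} \omega(F(a,b)) = \sum_{p} \#\{(a,b) : |a|,|b|\le X,\ p \mid F(a,b)\} + O(X^2),
\]
and $\#\{(a,b) \bmod p : p \mid F(a,b)\} = \rho(p)\, p$ where $\rho(p)$ is the number of projective roots of $F$ mod $p$, which by Lang–Weil / the Chebotarev density theorem averages to $1$ over primes $p \le X$ (here irreducibility of $F$ is essential: $\sum_{p \le Y} \rho(p)/p = \log\log Y + O(1)$). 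Summing over $p \le X^{O(1)}$ — noting $|F(a,b)| \le c X^d$ so all prime factors are at most $cX^d$ — gives mean $\sim \log\log X$ up to $O_f(1)$. The second moment $\sum \omega(F(a,b))^2$ is handled the same way, now counting pairs $(a,b)$ with $p q \mid F(a,b)$ for $p \ne q$; the main term factors as $(\log\log X)^2$ via a CRT argument plus the same Chebotarev average applied twice, with an error of order $X^2 \log\log X$ coming from the diagonal $p = q$ and from the large-prime regime. Hence $\sum_{|a|,|b|\le X} |\omega(F(a,b)) - \log\log X|^2 = O(X^2 \log\log X)$, and Chebyshev's inequality gives
\[
\#\{(a,b) : |a|,|b| \le X,\ \omega(s(F(a,b))) < M\} \ll \frac{X^2 \log\log X}{(\log\log X - M)^2} = O_M\!\left(\frac{X^2}{\log\log X}\right),
\]
for $X$ large; translating back to rationals $r = a/b$ (which only decreases the count, up to the Euler-product constant $6/\pi^2$ for coprimality) finishes the proof.

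The main obstacle I anticipate is controlling the error terms when summing over primes $p$ up to the large bound $X^d$ rather than a fixed bound: one needs the count $\#\{(a,b) \in [-X,X]^2 : p \mid F(a,b)\}$ to be genuinely $\rho(p) X^2/p + O(\rho(p) X)$ with a uniform-in-$p$ error, which requires a careful lattice-point estimate for the sublattice cut out by $F \equiv 0 \pmod p$, and for $p$ close to $X$ or larger the error term dominates, so those primes must be treated separately — their total contribution is $O(X^2)$ to the first moment and $O(X^2 \log\log X)$ to the second, which is acceptable but needs a separate (elementary) bound, e.g. $\sum_{p > X} \#\{(a,b) : p \mid F(a,b)\} \ll \sum_{p > X} (X^2/p + X) \ll X^2$ using that $F(a,b)$ has $\ll_d 1$ prime factors exceeding $X$. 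The other delicate point is making the passage from $\omega(F(a,b))$ to $\omega(s(F(a,b)))$ rigorous — showing $\sum_{|a|,|b|\le X}(\omega(F(a,b)) - \omega(s(F(a,b)))) = O(X^2)$ — which amounts to bounding $\sum_{p^2 \mid F(a,b)} 1$, handled by the same lattice count for the congruence $F \equiv 0 \pmod{p^2}$ together with $\sum_p X^2/p^2 \ll X^2$.
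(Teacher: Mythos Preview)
Your proposal is correct and follows essentially the same Tur\'an--Kubilius second-moment strategy as the paper: compute the first two moments of the relevant $\omega$-function, obtain a variance bound of order $X^2\log\log X$, and apply Chebyshev's inequality to bound the exceptional set.

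The one organizational difference is that you homogenize to the binary form $F(a,b)=b^{d}f(a/b)$ and work over integer pairs in $[-X,X]^2$, whereas the paper (in its Lemma~\ref{lem:average}) works directly with the rational values $f(r)$ and splits the prime count according to the sign of $\nu_p(f(r))$: primes with positive valuation are handled via the root count $\rho_f(p)=\#\{\bar r\in\F_p:f(\bar r)\equiv 0\}$, primes with negative valuation via $\omega(d(r))$ for the denominator $d(r)$, and the passage to the square-free part via the mod-$p^2$ root bound of Lemma~\ref{lem:triv}. Your homogenization merges the paper's ``numerator'' and ``denominator'' cases into a single projective root count $\rho(p)$, which is a slightly cleaner bookkeeping device; conversely, the paper's splitting makes the role of the leading-coefficient primes (your set $S$) and the parity of $d$ more transparent. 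The mod-$p^2$ step you flag (bounding $\sum_{p^2\mid F(a,b)}1$) is exactly the paper's Lemma~\ref{lem:triv}. Both routes lead to the same variance estimate and the same final bound.
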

Proposition~\ref{prop:maintool} will be one of our key tools to prove the main results of this article. To prove the same, we need the following two results.
\begin{lemma}\label{lem:triv}
Let $f(t)\in \mathbb{Z}[t]$ be a non-constant irreducible polynomial, and $p$ be any given prime. Then the number of solutions to the equation $f(\bar{r})=0\pmod {p^2}$ over $\bar{r}\in \mathbb{Z}/p^2\mathbb{Z}$ is bounded by $2\mathrm{deg}(f)$, for any prime $p\nmid \mathrm{gcd}(c(f),c(f'))$. Here $f'$ is the derivative of $f$, and $c(f)$ (resp. $c(f')$) denotes the $\mathrm{gcd}$ of the coefficients of $f$ (resp. $\mathrm{gcd}$ of the coefficients of $f'$).
\end{lemma}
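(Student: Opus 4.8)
\textbf{Proof proposal for Lemma~\ref{lem:triv}.}

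The plan is to reduce the count of roots of $f$ modulo $p^2$ to the count of roots modulo $p$ together with a Hensel-type lifting argument. First I would observe that every $\bar r \in \Z/p^2\Z$ with $f(\bar r)\equiv 0\pmod{p^2}$ reduces to some $\bar r_0\in \Z/p\Z$ with $f(\bar r_0)\equiv 0\pmod p$, so it suffices to bound, for each such $\bar r_0$, the number of lifts $\bar r \equiv \bar r_0 \pmod p$ with $f(\bar r)\equiv 0\pmod{p^2}$. Writing $\bar r = \bar r_0 + p k$ for $k\in\Z/p\Z$, a Taylor expansion gives $f(\bar r_0 + pk)\equiv f(\bar r_0) + pk\, f'(\bar r_0)\pmod{p^2}$, so the condition becomes a linear congruence in $k$: namely $\tfrac{f(\bar r_0)}{p} + k\, f'(\bar r_0)\equiv 0\pmod p$ (here $p\mid f(\bar r_0)$ since $\bar r_0$ is a root mod $p$). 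If $f'(\bar r_0)\not\equiv 0\pmod p$ this has a unique solution $k$; if $f'(\bar r_0)\equiv 0\pmod p$ it has either $0$ or $p$ solutions.

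So the next step is to control the "bad" residues $\bar r_0$ where $f(\bar r_0)\equiv f'(\bar r_0)\equiv 0\pmod p$, since those could contribute up to $p$ lifts each. The key point is that since $f$ is irreducible and nonconstant, $\gcd(f,f')=1$ in $\Q[t]$, hence there exist $u,v\in\Z[t]$ and a nonzero integer $R$ (a multiple of the resultant) with $u f + v f' = R$. For any prime $p\nmid R$ — and in particular, after checking that the hypothesis $p\nmid \gcd(c(f),c(f'))$ can be arranged to cover the relevant primes, or more honestly by noting $R$ divides a power of $\mathrm{disc}(f)$ — a common root of $f$ and $f'$ mod $p$ is impossible. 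Thus for such $p$ there are no bad residues, every root $\bar r_0$ mod $p$ has $f'(\bar r_0)\not\equiv 0\pmod p$, and hence lifts uniquely to at most one root mod $p^2$. Combined with the elementary bound that $f$ has at most $\deg(f)$ roots mod $p$ (as $\F_p[t]$ is a UFD), this gives at most $\deg(f)$ roots mod $p^2$, which is within the claimed bound $2\deg(f)$.

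The main obstacle — and the point requiring care — is matching the precise hypothesis stated in the lemma, $p\nmid \gcd(c(f),c(f'))$, with what the argument actually needs. The clean statement "$p$ does not divide the resultant of $f$ and $f'$" is the natural hypothesis for ruling out common roots; the condition $p\nmid\gcd(c(f),c(f'))$ is what one needs to ensure $f \bmod p$ is still nonconstant (so that the "at most $\deg f$ roots" bound is non-vacuous and the leading behaviour is controlled), while the factor of $2$ in $2\deg(f)$ presumably absorbs the finitely many primes dividing the resultant but not $\gcd(c(f),c(f'))$, or the degenerate reductions. I would handle this by splitting into the case $p\nmid \mathrm{disc}(f)$ (where the Hensel argument gives $\le \deg f$) and the remaining finitely many primes, for which one can either invoke a crude uniform bound or simply note the lemma is only applied with $p$ ranging over all but finitely many primes in the sequel; in either case the constant $2\deg(f)$ is generous enough to close the gap.
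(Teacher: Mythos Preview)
Your approach is exactly the paper's: write $\bar r=\bar r_1+p\bar r_2$, Taylor-expand, bound the roots $\bar r_1$ mod $p$ by $\deg(f)$, and use Hensel to control the lifts. The paper's factor of $2$ arises simply from separating $\bar r_2=0$ from $\bar r_2\neq 0$; in the latter case it claims $\bar r_2$ is uniquely determined, which requires $f'(\bar r_1)\not\equiv 0\pmod p$.

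Where you part ways is precisely the point you flagged, and you are right to do so. The paper argues: if $p\mid f(\bar r_1)$ and $p\mid f'(\bar r_1)$, then since $\gcd(f,f')$ in $\Z[t]$ equals $\gcd(c(f),c(f'))$, we get $p\mid\gcd(c(f),c(f'))$. The first clause is correct (this really is the gcd in the UFD $\Z[t]$), but the inference is not: a common root of $f,f'$ modulo $p$ does \emph{not} force $p$ to divide the polynomial $\gcd$. Concretely, $f(t)=t^2+25$ is irreducible with $c(f)=1$, $c(f')=2$, so $\gcd(c(f),c(f'))=1$ and every prime satisfies the hypothesis; yet modulo $p=5$ one has $f(5k)\equiv 0\pmod{25}$ for all $k$, giving $5>2\deg(f)=4$ solutions. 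So the lemma as stated is actually false, and the paper's proof breaks at exactly the step you isolated.

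Your proposed repair is the right one: replace the hypothesis by $p\nmid\operatorname{Res}(f,f')$ (equivalently $p\nmid\operatorname{disc}(f)$ up to leading-coefficient factors), which genuinely excludes common roots mod $p$ and yields the cleaner bound $\rho_f(p^2)\le\deg(f)$. This excludes only finitely many primes, and since the sole use of the lemma is in the estimate $\sum_p \rho_f(p^2)/p^2=O_f(1)$ inside Lemma~\ref{lem:average}, a uniform $O_f(1)$ bound for all but finitely many $p$ (with the trivial $\rho_f(p^2)\le p^2$ at the exceptional primes) is all that is needed.
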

\begin{proof}
We can express any solution $\bar{r} \in \mathbb{Z}/p^2\mathbb{Z}$ as $\bar{r}_1 + p\bar{r}_2$, where $0 \leq \bar{r}_1, \bar{r}_2 \leq p-1$. Using this representation, we have $f(\bar{r}) = f(\bar{r}_1) + p\bar{r}_2f'(\bar{r}_1) \pmod{p^2}$. Since $f(\bar{r}) = 0\pmod{p^2}$, we have $f(\bar{r}_1)= 0 \pmod{p}$, indicating that there are at most $\mathrm{deg}(f)$ possibilities for $0 \leq \bar{r}_1 \leq p-1$. 

Now to count the number of possibilities of $\bar{r}_2$, without loss of generality let us assume that $\bar{r}_2 \neq 0$. If $p$ divides $f'(\bar{r}_1)$, then $p$ divides $\mathrm{gcd}(f(\bar{r}_1), f'(\bar{r}_1))$. But since $f(t)\in \Q[t]$ is irreducible, we know that the $\mathrm{gcd}$ of $f$ and $f'$ over $\Q[t]$ is $1$, and hence, the gcd of $f$ and $f'$ over $\Z[t]$ is $\mathrm{gcd}(c(f),c(f'))$. In particular, $p\mid \mathrm{gcd}(c(f),c(f'))$. This is not possible, as per the assumption on $p$. Therefore, we may assume that $p$ does not divide $f'(\bar{r}_1)$. Now for each possible $\bar{r}_1$, setting $f(\bar{r}_1)=p\bar{t}_1$, we can write $\bar{r}_2=-\bar{t}_1f'(\bar{r}_1)^{-1}$. In particular, $\bar{r}_2$ is uniquely determined by $\bar{r}_1$, as long as $\bar{r}_2\neq 0$. This shows that the number of possible pairs $(\bar{r}_1, \bar{r}_2)$ is only $2\mathrm{deg}(f)$, for any prime $p$ not dividing $\mathrm{gcd}(c(f), c(f'))$.
\end{proof}

\begin{lemma}\label{lem:average}
Consider $S$ to be the set of all prime factors of the leading coefficient of $f$. Then, we have the following estimate: 
$$\sum_{h(r)\leq X} \Big|\omega_{S}(s(f(r)))-\log \log X\Big|^2=O_{S}\left(X^2\log \log X\right).$$
\end{lemma}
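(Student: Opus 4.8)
The plan is to prove the estimate
$$\sum_{h(r)\leq X} \Big|\omega_{S}(s(f(r)))-\log \log X\Big|^2=O_{S}\left(X^2\log \log X\right)$$
by the classical Turán-type second-moment method, working over the set of rationals $r=a/b$ with $\max\{|a|,|b|\}\leq X$ (there are $\asymp X^2$ such $r$). The key reduction is that $s(f(r))$ has the same prime support, away from $S$, as the integer $f_b(a)=b^{\deg f}f(a/b)$: writing $r=a/b$ in lowest terms, $f(a/b)=f_b(a)/b^{\deg f}$, and after clearing the denominator the square-free part of $f(a/b)$ is, up to primes dividing $b$ (which lie in the support we are excluding or contribute $O(1)$ since $\omega(b)\leq \omega$ of an integer $\leq X$) and primes in $S$, captured by $f_b(a)$. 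So it suffices to control $\sum_{|a|\leq X,|b|\leq Y}|\omega_S(f_b(a))-\log\log X|^2$ with $Y\asymp X$.

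The main computation is to establish the first and second moment estimates
$$\sum_{h(r)\leq X}\omega_S(s(f(r)))=X^2\log\log X+O(X^2),\qquad \sum_{h(r)\leq X}\omega_S(s(f(r)))^2=X^2(\log\log X)^2+O(X^2\log\log X),$$
from which the stated bound follows by expanding the square. For the first moment, write $\omega_S(s(f(r)))=\sum_{p\notin S}\mathbf{1}[p\mid s(f(r))]$ and note $p\mid s(f(r))$ iff $\nu_p(f(r))$ is odd; the dominant contribution is from $p\mid f(r)$ exactly once. Counting pairs $(a,b)$ with $p\mid f_b(a)$ reduces to counting zeros of the homogenization $F(a,b)=b^{\deg f}f(a/b)$ mod $p$, which for $p$ outside a bounded bad set is $\asymp p$ zeros in $(\Z/p\Z)^2$ (since $f$ is irreducible hence squarefree mod almost all $p$), giving density $\asymp 1/p$ of pairs. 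Summing $\sum_{p\leq X^{c}}1/p=\log\log X+O(1)$ by Mertens gives the main term; here Lemma~\ref{lem:triv} is exactly what bounds the contribution of $p^2\mid f_b(a)$ (namely $O(1/p^2)$ density after the bad primes), so those primes contribute only $O(X^2)$ in total. One must also truncate: primes $p$ with $p>X^{c}$ for suitable $c$ can divide $f_b(a)$ for at most $O(1)$ choices per... more carefully, $\sum_{p>X^c}\#\{(a,b):p\mid F(a,b)\}$ is controlled since $|F(a,b)|\leq C X^{\deg f}$ has $O(\deg f)$ prime factors exceeding $X^c$, contributing $O(X^2)$.

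For the second moment one expands $\omega_S(s(f(r)))^2=\sum_{p,q\notin S}\mathbf{1}[p\mid s(f(r))]\mathbf{1}[q\mid s(f(r))]$ and handles the off-diagonal $p\neq q$ by a Chinese Remainder Theorem / independence argument: the density of pairs $(a,b)$ with $pq\mid F(a,b)$ factors as $\asymp 1/(pq)$ up to the bad primes and an error from the counting of zeros mod $pq$ versus its main term; summing $\big(\sum_{p\leq X^c}1/p\big)^2=(\log\log X)^2+O(\log\log X)$ gives the main term, and the diagonal $p=q$ contributes $\sum 1/p=O(\log\log X)$, while the error terms from lattice-point counting in the box $[-X,X]\times[-X,X]$ accumulate to $O(X^2\log\log X)$. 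The main obstacle, and the place requiring care, is the uniform control of the lattice-point count $\#\{(a,b):|a|,|b|\leq X,\ F(a,b)\equiv 0\ (\mathrm{mod}\ n)\}$ for squarefree $n$ with potentially many prime factors up to $X^c$: one needs it to be $\big(\tfrac{\rho(n)}{n^2}+O(\rho(n)/n)\big)X^2$ type bound where $\rho(n)=\prod_{p\mid n}\rho(p)$ and $\rho(p)\ll_f p$, so that the accumulated error over all $n$ in the double sum is genuinely $O(X^2\log\log X)$ and not larger; this is where the irreducibility of $f$, the bound from Lemma~\ref{lem:triv}, and a careful choice of the truncation level $c$ (small enough that the error terms sum up correctly, large enough that the main term reaches $\log\log X$) all come together, following the template of \cite[Thm.~3.2.3]{Murty-Alina} adapted from integer arguments to homogeneous polynomial values over the box of pairs $(a,b)$.
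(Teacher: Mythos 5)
Your proposal follows essentially the same route as the paper's proof: a Tur\'an-type second-moment computation in which the mean of $\omega_S(s(f(r)))$ is extracted from the densities $\rho_f(p)/p$ of roots of $f$ modulo $p$, the contribution of primes with $p^2\mid f_b(a)$ is controlled by Lemma~\ref{lem:triv}, the off-diagonal terms of the second moment are handled by the Chinese Remainder Theorem, and the prime sum is truncated at a small power of $X$ so that the accumulated lattice-point-counting errors stay at $O(X^2\log\log X)$. All of these ingredients appear, in the same roles, in the paper's argument (which truncates at $Y=X^{1/2}$).

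The one genuine soft spot is your reduction from $s(f(a/b))$ to the homogenized integer $f_b(a)=b^{\deg f}f(a/b)$. You dismiss the primes dividing $b$ on the grounds that they contribute $O(1)$ because $\omega(b)$ is bounded by the number of prime factors of an integer of size at most $X$; but that quantity is not $O(1)$ --- it is typically of size $\log\log X$. For a prime $p$ not dividing the leading coefficient, $p\mid b$ forces $\nu_p(f(a/b))=-\deg(f)\,\nu_p(b)$, and since the paper's convention is $s(u/v)=s(uv)$, these denominator primes can add a full extra $\log\log X$ to $\omega_S(s(f(r)))$ (whether they do depends on the parity of $\deg(f)\,\nu_p(b)$). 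Because the lemma centres the variance at $\log\log X$, an unaccounted shift of the mean of that size would turn the left-hand side into $\asymp X^2(\log\log X)^2$ and the bound would fail. The paper does not absorb this into an error term: it isolates the case $r_p=\infty$ and evaluates $\sum_{h(r)\leq X}\omega_S(d(r))$ separately, then subtracts the even-valuation contribution. You need to carry out this bookkeeping explicitly rather than bury it in an $O(1)$; once that is done, your argument coincides with the paper's.
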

\begin{proof}
Let us start by noting that
\begin{align}\label{eqn:hmm}
    \sum_{h(r) \leq X} \omega_S(f(r))&=\sum_{h(r)\leq X} ~\sum_{\substack{p~\mathrm{prime}~\not\in~S\\\nu_p(f(r))=\mathrm{odd}}} 1+~\sum_{h(r)\leq X}~\sum_{\substack{p~\mathrm{prime}~\not\in~S\\\nu_p(f(r))\neq 0\\ \nu_p(f(r))=\mathrm{even}}} 1\nonumber\\ 
    &=\sum_{h(r)\leq X} \omega_S(s(f(r))~+\sum_{h(r)\leq X}~ \sum_{\substack{p~\mathrm{prime}~\not\in~S\\\nu_p(f(r))\neq 0\\\nu_p(f(r))=\mathrm{even}}} 1.
\end{align}
For any rational number $r$, let us denote $r_p=r \pmod p$. Then we have,
\begin{equation}\label{eqn:impequiv}
\nu_p(f(r))>0\implies r_p\in \mathbb{F}_p,~\mathrm{and}~p\not\in S \implies \nu_p(f(r))<0,~\mathrm{if~and~only~if},~r_p=\infty.
\end{equation}
This shows that
\begin{align}\label{eqn:sumwf}
    \sum_{h(r) \leq X} \omega_S(f(r))&= \sum_{h(r) \leq X}~~~\sum_{\substack{p~\mathrm{prime}~\not\in~S\\\nu_p(f(r))>0}}1+\sum_{h(r) \leq X}~\sum_{\substack{p~\mathrm{prime}~\not\in~S\\\nu_p(f(r))<0}} 1\nonumber \\ 
    &= \sum_{h(r) \leq X}~~\sum_{\substack{p~\mathrm{prime}~\not\in~S\\r_p\in \mathbb{F}_p\\\nu_p(f(r))>0}} 1+\sum_{h(r) \leq X}~\sum_{\substack{p~\mathrm{prime}~\not\in~S\\r_p=\infty}} 1.
\end{align}
It is now enough to estimate the second sum in (\ref{eqn:hmm}), and both the sums in (\ref{eqn:sumwf}). 
\subsubsection*{Case 1: estimate of the first sum in (\ref{eqn:sumwf})} For this, let us note that for any rational $r-\frac{a}{b}$, we have
\begin{equation}\label{eqn:finv}
f\left(\frac{a+p}{b}\right)=f\left(\frac{a}{b}\right)\pmod p,~f\left(\frac{a}{b+p}\right)=f\left(\frac{a}{b}\right)\pmod p.
\end{equation}
Again, since we are assuming that $r_p\in \mathbb{F}_p$, we can write the following combining with (\ref{eqn:finv}):
\begin{align*}
    \sum_{h(r) \leq X} \omega_{S}(f(r))=\sum_{\substack{p\leq X\\p~\not\in~S}}\left(X^2\frac{\rho_f(p)}{p}+O(\rho_f(p))\right)=X^2\log \log X+O(X^2),
\end{align*}
where $\rho_f(p)=\{\bar{r}\in \Z/p\Z:f(\bar{r})=0\pmod {p}\}$, and the last equality follows from \cite[Cor.~3.2.2]{Murty-Alina}. Furthermore, we can run the summation above for $p$ up to at most $X$ because $h(f(r))=X^{O(1)}$, and there are only $O(1)$ many primes $p$ for which $\nu_p(f(r))\neq 0$, and $O(1)\leq \deg(f)$.

\subsubsection*{Case 2: estimate of the second sum in (\ref{eqn:sumwf})} For a rational $r$, denote $d(r)$ to be the denominator of $r$ in the lowest term. Then, we have
\begin{equation}\label{eqn:wdr}
\sum\limits_{h(r)\leq X}~\sum_{\substack{p~\mathrm{prime}~\not\in~S\\r_p=\infty}} 1=\sum_{h(r)\leq X}\omega_S(d(r))=X^2\log \log X+O(X^2),
\end{equation}
where the last equality follows by the proof of Corollary 3.2.2 in \cite{Murty-Alina}.
\subsubsection*{Case 3: estimate of the second sum in (\ref{eqn:hmm})} Let us write 
$$\sum_{h(r)\leq X}~ \sum_{\substack{p~\mathrm{prime}~\not\in~S\\\nu_p(f(r))\neq 0\\\nu_p(f(r))=\mathrm{even}}} 1=\sum_{h(r)\leq X}~ \sum_{\substack{p~\mathrm{prime}~\not\in~S\\\nu_p(f(r))> 0\\\nu_p(f(r))=\mathrm{even}}} 1+~\sum_{h(r)\leq X}~ \sum_{\substack{p~\mathrm{prime}~\not\in~S\\\nu_p(f(r))<0\\\nu_p(f(r))=\mathrm{even}}} 1.$$
The second sum above is, of course, bounded by (\ref{eqn:wdr}). To bound the first sum, we use Lemma~\ref{lem:triv}, and get 
\begin{equation}\label{eqn:even}
    \sum_{h(r)\leq X}~ \sum_{\substack{p~\mathrm{prime}~\not\in~S\\\nu_p(f(r))> 0\\\nu_p(f(r))=\mathrm{even}}} 1\leq \sum_{p\leq X}\left(X^2\frac{\rho_f(p^2)}{p^2}+O(\rho_f(p^2))\right)=O(X^2),
\end{equation}
where $\rho_f(p^2)=\{\bar{r}\in \Z/p^2\Z:f(\bar{r})=0\pmod {p^2}\}$, and this is at most $\mathrm{deg}(f)^2$, due to Lemma~\ref{lem:triv}.
Therefore, combining (\ref{eqn:hmm}), (\ref{eqn:sumwf}), (\ref{eqn:wdr}), and (\ref{eqn:even}), we get $\sum\limits_{h(r)\leq X} \omega_S(s(f(r)))=X^2\log\log X+O(X^2)$. To complete the proof of the lemma, it is now enough to show that 
\begin{equation}\label{eqn:sumsq}
\sum\limits_{h(r)\leq X} \omega_S(s(f(r)))^2\leq (X\log \log X)^2+O(X^2)
\end{equation}
Since $\omega(f(r))\leq \sum\limits_{\substack{p~\mathrm{prime}\\\nu_p(f(r))>0}} 1+\sum\limits_{\substack{p~\mathrm{prime}\\r_p=\infty}}1$, we get, $\sum\limits_{h(r)\leq X} \omega_S(s(f(r)))^2\leq  \sum\limits_{h(r)\leq X} \omega(f(r))^2\leq 2(S_1+S_2)$,   
where 
\begin{equation}\label{eqn:s1s2}
S_1=\sum\limits_{h(r)\leq X} \Big(\sum\limits_{\substack{p~\mathrm{prime}\\\nu_p(f(r))>0}} 1\Big)^2,~\mathrm{and}~S_2=\sum\limits_{h(r)\leq X} \Big(\sum\limits_{\substack{p~\mathrm{prime}\\r_p=\infty}}1\Big)^2.
\end{equation}
For any suitable parameter $Y\leq X$, applying Chinese remainder theorem, we have
\begin{align*}
&S_1(Y)=\sum\limits_{h(r)\leq X} \Big(\sum\limits_{\substack{p~\mathrm{prime}~\leq Y\\\nu_p(f(r))>0}} 1\Big)^2=X\Big(\sum_{\substack{p\neq q~\mathrm{primes}\\p,q\leq Y}} X\frac{\rho_f(p)\rho_f(q)}{pq}+O(1)\Big)=(X\log \log Y)^2+O(XY^2),\\
&S_2(Y)=\sum\limits_{h(r)\leq X} \Big(\sum\limits_{\substack{p~\mathrm{prime}~\leq Y\\r_p=\infty}}1\Big)^2=X\Big(\sum_{\substack{p\neq q~\mathrm{primes}\\p,q\leq Y}} X\frac{1}{pq}+O(1)\Big)=(X\log \log Y)^2+O(XY^2).
\end{align*}
The proof of equation (\ref{eqn:sumsq}), and consequently the proof of the lemma is now complete taking $Y=X^{1/2}$, as we have
$|S_1(Y)-S_1|,~|S_2(Y)-S_2|=O(X^2).$
\end{proof}

\subsubsection*{Proof of Proposition~\ref{prop:maintool}}
Denote $S_M(X)$ to be the number of $r\in \Q$ such that $\omega(s(f(r)))\leq M$, and $h(r)\leq X$. It follows from Lemma~\ref{lem:average} that $S_M(X)=O_M\left( \frac{X^2}{\log \log X}\right)$. This completes the proof of the proposition. \qed

\begin{remark}\rm
Note that Proposition~\ref{prop:maintool} addresses the number of prime factors in the square-free parts of the values of $f$. In contrast, \cite{Poonen} focuses on square-free values. When we restrict the evaluation of $f$ to integers, \cite[Thm.~3.2]{Poonen} shows that the proportion of square-free values of $f$ is precisely $\prod\limits_{p,~\mathrm{prime}}\left(1-\frac{c_p}{p^{2}}\right)$, where the quantity $c_p$ is studied in Lemma~\ref{lem:triv}. In particular, the quantity $\prod\limits_{p,~\mathrm{prime}}\left(1-\frac{c_p}{p^{2}}\right)$, is positive. In other words, if $f(t) \in \Z[t]$ is irreducible, then $f(n)$ is square-free for a positive proportion of the times.
\end{remark}
Finally, let us address the last preparatory counting tool, that is needed to prove the main result of this section.
\begin{lemma}\label{lem:bound}
    For any $\ell\in \{3,5,7\}$, and any integer $M$, we have the following estimate:
    $$\#\Big\{(A,B)\in S_{f_{\ell},g_{\ell}}(X): \omega(N(E_{A,B}))\leq M\Big\}=O_{M}\left(\frac{\#\mathcal{E}_{\ell}(X)}{\log \log X}\right).$$
\end{lemma}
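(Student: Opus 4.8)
The statement asserts that among the roughly $\#\mathcal{E}_\ell(X)$ curves in the one-parameter family $S_{f_\ell, g_\ell}(X)$, only a vanishing proportion — specifically $O_M(\#\mathcal{E}_\ell(X)/\log\log X)$ — have conductor supported on at most $M$ primes. The basic principle is that $\omega(N(E_{A,B}))$ is comparable to $\omega(s(\Delta(E_{A,B})))$ up to the bounded contribution of the primes $2$ and $3$ (see Remark~\ref{rem:imp}), and along the family $\Delta(E_{A,B})$ is essentially $\Delta_\ell(r)$ for the corresponding parameter $r$, multiplied by a twisting factor. So the goal reduces to showing that $\omega(s(\Delta_\ell(r)))$ is $\geq M$ for all but $O_M$ of the parameters $r$ of bounded height, and this is exactly the content of Proposition~\ref{prop:maintool} applied to an irreducible factor of $\Delta_\ell$.

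The plan is as follows. First I would use Lemma~\ref{lem:veryimp} to pass from $(A,B)\in S_{f_\ell, g_\ell}(X)$ to the parameter $r = a/b \in \mathcal{E}'_\ell(X)$, with $h(r) = O(X^{m_\ell + n_\ell})$; recall $\#\mathcal{E}_\ell(X) \asymp X^{m_\ell + n_\ell}$, so a bound of the shape $O_M(h(r)^2/\log\log h(r))$ on the parameter side translates to the claimed bound on the curve side (up to adjusting $\log\log X$ versus $\log\log(X^{m_\ell+n_\ell})$, which only changes the implied constant). Next, by Lemma~\ref{lem:irreducible}, each $\Delta_\ell$ has a non-trivial irreducible factor $P_\ell \in \Z[t]$ occurring with multiplicity one; write $\Delta_\ell = P_\ell \cdot Q_\ell$. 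For $(A,B)$ arising from $r$ we have, up to bounded factors of $2$ and $3$ and a twisting sixth power $u$, that $\Delta(E_{A,B})$ is $\Delta_\ell(r)$ (suitably cleared of denominators); since the square-free part of a sixth power contributes nothing and $s(\Delta_\ell(r)) = s(P_\ell(r)\, Q_\ell(r))$, we get $\omega(N(E_{A,B})) \geq \omega(s(P_\ell(r))) - O(1)$, because $P_\ell$ divides $\Delta_\ell$ exactly once so its prime factors are not cancelled by $Q_\ell$ except on the finitely many primes dividing the resultant $\mathrm{Res}(P_\ell, Q_\ell)$ and the leading coefficients. Hence $\omega(N(E_{A,B})) \leq M$ forces $\omega(s(P_\ell(r))) \leq M + C_\ell$ for an absolute constant $C_\ell$ depending only on $\ell$. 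Finally, I would apply Proposition~\ref{prop:maintool} to the irreducible polynomial $P_\ell$ with $M$ replaced by $M + C_\ell$:
$$\#\big\{r \in \Q : h(r)\leq CX^{m_\ell+n_\ell},\ \omega(s(P_\ell(r))) < M + C_\ell\big\} = O_M\!\left(\frac{X^{2(m_\ell+n_\ell)}}{\log\log X}\right),$$
and since $X^{2(m_\ell+n_\ell)} \asymp (\#\mathcal{E}_\ell(X))^2$… actually here I must be careful: $\#\mathcal{E}_\ell(X) \asymp X^{m_\ell+n_\ell}$, so $X^{2(m_\ell+n_\ell)} \asymp X \cdot \#\mathcal{E}_\ell(X)$ is too big — the resolution is that the parameters $r$ relevant to $\mathcal{E}_\ell(X)$ are not all of height $\leq CX^{m_\ell+n_\ell}$ but form a sparser set (roughly $a$ up to $X^{m_\ell}$ and $b$ up to $X^{n_\ell}$ separately), so one applies the proof of Proposition~\ref{prop:maintool} in the two-variable form of Lemma~\ref{lem:average}, namely $\sum_{|a|\leq X^{m_\ell},\,|b|\leq X^{n_\ell}} |\omega_S(s(P_\ell(a/b))) - \log\log X|^2 = O(X^{m_\ell+n_\ell}\log\log X)$, which by Chebyshev gives the count $O_M(X^{m_\ell+n_\ell}/\log\log X) = O_M(\#\mathcal{E}_\ell(X)/\log\log X)$.

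The main obstacle is the bookkeeping in the third step: one must verify that the square-free part of the actual minimal discriminant of $E_{A,B}$ genuinely inherits $\omega(s(P_\ell(r)))$ up to $O_M(1)$, despite (a) the twisting parameter $u$ contributing a sixth power that could in principle share prime factors with $P_\ell(r)$, (b) the passage from $\Delta_\ell(r)$ as a rational number to the integral minimal discriminant introducing and removing primes dividing the denominator $b$, and (c) primes dividing $\mathrm{Res}(P_\ell, Q_\ell)$, the leading coefficient of $P_\ell$, or lying in $\{2,3\}$. Each of these is a bounded (finite, $\ell$-dependent) set of exceptional primes, so each only shifts $M$ by an absolute constant — but assembling these carefully, and in particular checking that the relevant heights for the two coordinates $a, b$ match the $m_\ell, n_\ell$ exponents of Lemma~\ref{lem:veryimp} so that Lemma~\ref{lem:average} applies with the correct normalization, is where the real care is needed. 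Everything else is a direct citation of Proposition~\ref{prop:maintool}, Lemma~\ref{lem:irreducible}, and Lemma~\ref{lem:veryimp}.
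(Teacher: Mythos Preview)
Your proposal is correct and follows essentially the same route as the paper: pass to the parameter $r$ via Lemma~\ref{lem:veryimp}, pick out the multiplicity-one irreducible factor $F$ of $\Delta_\ell$ from Lemma~\ref{lem:irreducible}, and invoke Proposition~\ref{prop:maintool}. You are in fact more careful than the paper on two points. First, the paper simply writes $\omega(s(\Delta(r)))\leq M+2 \Rightarrow \omega(F(r))\leq M+2$ without tracking the resultant primes or the twisting factor $u$, whereas you correctly absorb these into an $O_\ell(1)$ shift of $M$. Second, you spotted that Proposition~\ref{prop:maintool} is stated for the symmetric height $h(r)\leq X$ and does not literally yield the asymmetric bound over $|a|\leq X^{m_\ell},\,|b|\leq X^{n_\ell}$; the paper writes the intermediate bound $O_M(X^{\max\{m_\ell,n_\ell\}}/\log\log X)$ without comment, while your suggestion to revisit the proof of Lemma~\ref{lem:average} with separate ranges for $a$ and $b$ is the right fix (the argument there is already bilinear in $a,b$ and gives $O(X^{m_\ell}X^{n_\ell}\log\log X)$ for the second moment).
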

\begin{proof}
Consider an elliptic curve $E_{A,B}$ in the minimal form. Then the conductor $E_{A,B}$ is (up to some bounded factor of $2$ and $3$) the product over all primes
$p$ dividing the discriminant $|\Delta(E_{A,B})|=|4A^3+27B^2|$. Therefore, it suffices to estimate the quantity, $\{(A,B)\in S_{f,g}(X): \omega(4A^3+27B^2)\leq M+2\}$. Now Lemma~\ref{lem:veryimp} implies that it is enough to estimate for each $\ell \in \{3,5,7\}$,
$$\Big\{r=a/b:|a|\leq X^{m_{\ell}},~|b|\leq X^{n_{\ell}}~\omega(4s(f(r)^3+27g(r)^2))\leq M+2\Big\}.$$
Denoting $\Delta(r) = 4f(r)^3 + 27g(r)^2$, it follows from Lemma~\ref{lem:irreducible} that there exists an irreducible factor $F(t)$ of $\Delta(t)$ that divides $\Delta(t)$ with a valuation of exactly $1$. We then have:
\begin{align*}
\#\Big\{r=a/b:|a|\leq X^{m_{\ell}},|b|\leq X^{n_{\ell}},~&\omega(s(\Delta(r)))\leq M+2\Big\}\\
&\leq \#\Big\{r=a/b:|a|\leq X^{m_{\ell}},|b|\leq X^{n_{\ell}},~\omega(F(r))\leq M+2\Big\}\\
&=O_{M}\left(\frac{X^{\max\{m_{\ell},n_{\ell}\}}}{\log \log X}\right)=O_{M}\left(\frac{\#\mathcal{E}_{\ell}(X)}{\log \log X}\right),
\end{align*}
where the last estimate follows from Proposition~\ref{prop:maintool}, and the fact that $\#\mathcal{E}_{\ell}(X)\asymp X^{m_{\ell}+n_{\ell}}$.
\end{proof} 
With this groundwork laid, we are now prepared to prove the main result of this section.
\subsection*{Proof of Theorem~\ref{thm:isogeny}}
The proof follows combining Lemma~\ref{lem:bound} and Lemma~\ref{lem:rankbound}.
\qed
\begin{remark}\rm
Additionally, Lemma~\ref{lem:bound} can be utilized to investigate the validity of $M$-Watkins over the fibers of certain elliptic surfaces over $\Q(t)$. This extends the work in \cite[Thm.~8]{BKP23}, where the authors examined the frequency of occurrences of even modular degrees over any such family.
\end{remark}

\bibliographystyle{amsplain} 
	\bibliography{ref.bib}

\end{document}